\documentclass[a4paper,11pt]{article}
\usepackage[all]{xy}
\usepackage[T1]{fontenc}
\usepackage{tikz-cd}
\usepackage[utf8]{inputenc}
\usepackage{lmodern}
\usepackage{microtype}
\usepackage{mathtools}
\usepackage{amsfonts}
\usepackage{amsmath}
\usepackage{amssymb}
\usepackage{pictexwd,dcpic}
\usepackage{thmtools}
\newtheorem{theorem}{Theorem}[section]
\newtheorem{lemma}[theorem]{Lemma}
\newtheorem{proposition}[theorem]{Proposition}
\newtheorem{corollary}[theorem]{Corollary}

\newtheorem{conject}[theorem]{Conjecture}

\newenvironment{proof}[1][Proof]{\begin{trivlist}
\item[\hskip \labelsep {\bfseries #1}]}{\end{trivlist}}
\newenvironment{conjecture}[1][Conjecture]{\begin{trivlist}
\item[\hskip \labelsep {\bfseries #1}]}{\end{trivlist}}

\newcommand{\qed}{\nobreak \ifvmode \relax \else
    \ifdim\lastskip<1.5em \hskip-\lastskip
     \hskip1.5em plus0em minus0.5em \fi \nobreak
      \vrule height0.75em width0.5em depth0.25em\fi}
\usepackage[top=2cm,bottom=2cm,left=2.5cm,right=2cm]{geometry}

 \DeclareFontFamily{U}{wncy}{}
    \DeclareFontShape{U}{wncy}{m}{n}{<->wncyr10}{}
    \DeclareSymbolFont{mcy}{U}{wncy}{m}{n}
    \DeclareMathSymbol{\Sh}{\mathord}{mcy}{"58}
\usepackage{hyperref}
\usepackage{fancyhdr}
\date{}
  \title{\textbf{A Coates--Sinnott-type Theorem for First Derivatives of Artin \(L\)-Functions}}
  \author{Saad EL BOUKHARI}
\begin{document}
\maketitle
\date{}
\footnotetext[1]{S. El Boukhari. University Marie et Louis Pasteur, Lab. of Math. of Besançon, Besançon, 25000, France. 
		\textit{E-mails:}  \texttt{saad.el\_boukhari@umlp.fr}, \texttt{saadelboukhari1234@gmail.com}}
\footnotetext[2]{The author declares that he has no conflict of interest to disclose.}
\begin{abstract}
Let \(K/k\) be a finite abelian extension of number fields with Galois group \(G\) and let \(n\geq 2\). We prove, assuming the relevant \(p\)-part of the equivariant Tamagawa number conjecture, first-derivative analogues of the Deligne--Ribet integrality theorem and of the Coates--Sinnott conjecture. We construct a rank-one leading term from the first derivatives at \(s=1-n\) of the \(S\)-truncated Artin \(L\)-functions and show that it satisfies an integral annihilation property. We then attach to this leading term a fractional ideal of \(\mathbb Q_p[G]\) and prove that, up to the
natural torsion factor coming from \(K_{2n-1}(O_K)\), this ideal annihilates the even \(K\)-group \(K_{2n-2}(O_{K,S})\). The proof uses determinant methods, \(\Sigma\)-modified étale complexes, and a cancellation argument which removes
the auxiliary Euler factors.
\end{abstract}
\textit{Keywords: Artin \(L\)-functions, leading terms, Coates--Sinnott conjecture,
Deligne--Ribet integrality, equivariant Tamagawa number conjecture,
algebraic \(K\)-theory, étale cohomology, Fitting ideals.}\\
\textit{2020 Mathematics Subject Classification: Primary 19F27; Secondary 11R42, 11R70, 11R34.}
\section{Introduction}
Let \(K/k\) be a finite abelian extension of number fields with Galois group
\(G\), and let \(S\) be a finite set of places of \(k\) containing the
archimedean places and all places ramified in \(K/k\). The aim of this paper is
to prove first-derivative analogues of two classical results on equivariant
special values of Artin \(L\)-functions: the integrality theorem of Deligne and
Ribet and the Coates--Sinnott conjecture.

\medskip

\noindent Special values of Artin \(L\)-functions often give annihilators of
arithmetic groups. This phenomenon is already visible in the Brumer--Stark
conjecture \cite{BrumerStark}: in its classical setting, Stickelberger elements
constructed from values at \(s=0\) are predicted to annihilate suitable ideal
class groups. In this way, Artin \(L\)-values do not only control numerical
invariants such as orders of groups; they also produce explicit annihilation
relations over group rings.

\medskip

\noindent At negative integers, the corresponding arithmetic objects are higher
algebraic \(K\)-groups. For \(n\geq 2\), one considers the equivariant
Stickelberger element (see \S \ref{Sec2.3} below)
\[
\theta_{K/k,S}(1-n)
=
\sum_{\chi\in\widehat G}
L_{K/k,S}(1-n,\chi^{-1})e_\chi
\in \mathbb C[G].
\]
By a theorem of Siegel \cite{Siegel}, one has in fact
\[
\theta_{K/k,S}(1-n)\in\mathbb Q[G].
\]
The following theorem of Deligne and Ribet \cite{DeligneRibet} gives the fundamental integrality
property of this element.

\begin{theorem}[Deligne--Ribet]
\label{thm:DR-integrality-intro}
Assume in addition that \(k\) is totally real. Then, for any prime number \(p\)
and any integer \(n\geq 1\), one has
\[
\operatorname{Ann}_{\mathbb Z_p[G]}
\bigl(K_{2n-1}(O_K)_{\mathrm{tors}}\otimes_{\mathbb Z}\mathbb Z_p\bigr)
\cdot
\theta_{K/k,S}(1-n)
\subseteq
\mathbb Z_p[G].
\]
\end{theorem}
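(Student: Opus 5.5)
The approach is to reduce the statement to the classical Deligne--Ribet congruences for the partial zeta functions of \(K/k\), by comparing the annihilator of the \(K\)-theoretic torsion with the annihilator of a module of roots of unity. Write \(W_n(K)_p=H^0(K,\mathbb Q_p/\mathbb Z_p(n))\). This is a finite cyclic \(\mathbb Z_p[G]\)-module on which \(G\) acts through the \(n\)-th power of the cyclotomic character of \(\mathrm{Gal}(K(\mu_{p^\infty})/k)\).

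\emph{Step 1.} First I would show
\[
\operatorname{Ann}_{\mathbb Z_p[G]}\bigl(K_{2n-1}(O_K)_{\mathrm{tors}}\otimes\mathbb Z_p\bigr)\subseteq \operatorname{Ann}_{\mathbb Z_p[G]}\bigl(W_n(K)_p\bigr).
\]
For this I use the étale Chern class map \(K_{2n-1}(O_K)\otimes\mathbb Z_p\to H^1(O_K[1/p],\mathbb Z_p(n))\). Its target has torsion subgroup \(W_n(K)_p\). By Soulé and Dwyer--Friedlander this map is surjective, and it is \(G\)-equivariant. Restricting to torsion therefore gives a \(G\)-equivariant surjection onto \(W_n(K)_p\), so \(W_n(K)_p\) is a \(\mathbb Z_p[G]\)-quotient of the torsion. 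Anything killing the torsion then kills this quotient, which gives the inclusion. For \(p=2\) I expect to need extra care: either a \(2\)-adic refinement via Rost--Voevodsky, or the observation that only a quotient is needed, which Soulé's construction already provides.

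\emph{Step 2.} Next I compute \(\operatorname{Ann}_{\mathbb Z_p[G]}(W_n(K)_p)\) explicitly. Because \(W_n(K)_p\) is cyclic and \(G\) acts on it by a character, its annihilator is generated by two kinds of elements. The first is the order \(w_n(K)_p\). The second consists of the elements \(1-\mathrm N\mathfrak c^{\,n}\sigma_{\mathfrak c}^{-1}\), where \(\mathfrak c\) runs over integral ideals of \(k\) prime to \(pS\) and \(\sigma_{\mathfrak c}\) is the Artin symbol in \(G\). Chebotarev in \(K(\mu_{p^\infty})/k\) shows these generate, since their images realize every element of \(\mathrm{Gal}(K(\mu_{p^\infty})/k)\). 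It therefore suffices to prove two integrality statements:
\[
(1-\mathrm N\mathfrak c^{\,n}\sigma_{\mathfrak c}^{-1})\,\theta_{K/k,S}(1-n)\in\mathbb Z_p[G]
\quad\text{and}\quad
w_n(K)_p\,\theta_{K/k,S}(1-n)\in\mathbb Z_p[G].
\]
The second follows from the first. Choosing \(\mathfrak c\) suitably, the \(p\)-adic valuations of the coefficients of \(1-\mathrm N\mathfrak c^{n}\sigma_{\mathfrak c}^{-1}\) after projection to the relevant \(\chi\)-components recover \(w_n(K)_p\). Concretely, \(w_n(K)_p\) is the gcd of \(\mathrm N\mathfrak c^{n}-1\) over \(\mathfrak c\) with \(\sigma_{\mathfrak c}=1\).

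\emph{Step 3 (main obstacle).} The first integrality statement is the Deligne--Ribet theorem proper. In terms of partial zeta functions it says that \(\zeta_S(1-n,\sigma)-\mathrm N\mathfrak c^{\,n}\zeta_S(1-n,\sigma\sigma_{\mathfrak c}^{-1})\) is \(p\)-integral for every \(\sigma\in G\). I would prove this with Hilbert modular Eisenstein series over the totally real field \(k\). Their constant terms are, up to explicit factors, these partial zeta values, and their higher Fourier coefficients are visibly integral divisor sums. The \(\mathfrak c\)-smoothed combination removes the denominators in the non-constant terms, and the \(q\)-expansion principle for Hilbert modular forms, in the strong form over \(\mathbb Z_p\) at the cusps, then forces the constant term to be \(p\)-integral. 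This is the step where the real work lies: the \(q\)-expansion principle at all cusps is needed, and the case \(p=2\) must be handled. Equivalently, one may phrase it as the existence of a \(\mathbb Z_p[[\mathrm{Gal}(K(\mu_{p^\infty})/k)]]\)-valued pseudo-measure, whose moments give \(\theta_{K/k,S}(1-n)\) after twisting by the \(n\)-th power of the cyclotomic character. Combining Steps 1--3 yields the stated inclusion.
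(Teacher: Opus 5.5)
The paper does not prove this statement; it quotes it from \cite{DeligneRibet}. Your outline is the standard derivation of the annihilator form from the Deligne--Ribet congruences. It also mirrors what the paper does for its derivative analogue: there, \cite[Lemma 6.9(2)]{GreitherPopescu} says the annihilator of the torsion is generated by the elements $\Delta_\Sigma$ (your Steps 1--2), and integrality is then checked for each $\Delta_\Sigma$-multiple (your Step 3).

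For odd $p$ the reduction is correct, with one addition in Step 1. A surjection induces a surjection on torsion only if its kernel is finite. For $n\geq 2$ this holds because both sides have $\mathbb Z_p$-rank $d_n$; for $n=1$ the torsion is just $\mu(K)\otimes\mathbb Z_p=W_1(K)_p$. Alternatively, use the isomorphism $\mathbb Z_pK_{2n-1}(O_K)_{\mathrm{tors}}\simeq H^0(K,\mathbb Q_p/\mathbb Z_p(n))$, as the paper does.

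Step 3 is Deligne--Ribet's theorem itself, so in the end you cite it just as the paper does. Your sketch misdescribes the smoothing, though. The non-constant Fourier coefficients are already integral divisor sums, as you say yourself. The factor $1-\mathrm N\mathfrak c^{\,n}\sigma_{\mathfrak c}^{-1}$ enters through the constant term, when the $q$-expansion principle is used to control it. It does not clear denominators elsewhere.

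There is a genuine error at $p=2$: your fallback ``only a quotient is needed, which Soul\'e's construction already provides'' is false. Take $K=k=\mathbb Q$ and $n=3$. Then $K_5(\mathbb Z)\cong\mathbb Z$ is torsion-free, while $W_3(\mathbb Q)_2\cong\mathbb Z/2$. So the inclusion of Step 1 would read $\mathbb Z_2\subseteq 2\mathbb Z_2$, which fails. Equivalently, the $2$-adic Chern class map $\mathbb Z_2\to H^1(\mathbb Z[1/2],\mathbb Z_2(3))\cong\mathbb Z_2\oplus\mathbb Z/2$ is not surjective.

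The statement survives only because of the $L$-function side. If $n$ is odd and $K$ has a real place above some real place $v$ of $k$, then $D_v=1$. By \eqref{eq:35} every $\chi$ then has $r(\chi,n)\geq 1$, so $\theta_{K/k,S}(1-n)=0$.

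So at $p=2$ you must split into three cases. If $K$ is totally imaginary, the $2$-adic comparison isomorphism \cite{RognesWeibel,Kahn} makes Step 1 work. If $K$ has a real place and $n$ is odd, the claim is trivial by the vanishing just noted. If $K$ has a real place and $n$ is even, you need a $G$-equivariant surjection of the $2$-primary torsion onto $W_n(K)_2$. This has to be extracted from the precise $2$-adic computations of Rognes--Weibel, not from Soul\'e; compare $K_3(\mathbb Z)\otimes\mathbb Z_2\cong\mathbb Z/16\twoheadrightarrow W_2(\mathbb Q)_2\cong\mathbb Z/8$.
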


\noindent This integrality theorem is the input for the Coates--Sinnott conjecture \cite{Coates-Sinnott}, which
predicts that the same equivariant special values annihilate even
\(K\)-groups.

\begin{conject}[Coates--Sinnott]
\label{conj:CS-intro}
Suppose that \(K/k\) is a finite abelian extension of totally real number
fields and that \(n\geq 2\) is even. Then, for every prime number \(p\), one has
\[
\operatorname{Ann}_{\mathbb Z_p[G]}
\bigl(K_{2n-1}(O_K)_{\mathrm{tors}}\otimes_{\mathbb Z}\mathbb Z_p\bigr)
\cdot
\theta_{K/k,S}(1-n)
\subseteq
\operatorname{Ann}_{\mathbb Z_p[G]}
\bigl(K_{2n-2}(O_K)\otimes_{\mathbb Z}\mathbb Z_p\bigr).
\]
\end{conject}

\noindent An unconditional proof of the Coates--Sinnott conjecture for odd prime numbers $p$ was given by Johnston and Nickel
\cite{JohnstonNickel}.
The statements above are formulated in terms of values of equivariant
\(L\)-functions. However, in natural parity situations these values vanish. For
example, if \(K/k\) is totally real and \(n\) is odd, then
\[
\operatorname{ord}_{s=1-n}L_{K/k,S}(s,\chi)>0
\]
for every \(\chi\in\widehat G\), and hence
\[
\theta_{K/k,S}(1-n)=0.
\]
In this case the Coates--Sinnott conjecture gives no non-trivial information.
It is then natural to ask whether an analogous statement can be formulated
using the first non-zero terms of the functions
\(L_{K/k,S}(s,\chi)\) at \(s=1-n\). This is also consistent with the form of
the analytic class number formula, the Lichtenbaum conjecture, and the
equivariant Tamagawa number conjecture, where leading terms rather than merely
values play the central role.

\medskip

\noindent The present paper treats the rank-one case. In this situation the relevant
leading terms are first derivatives. We 
consider the first-derivative component
\[
\theta'_{K/k,S}(1-n)
=
\sum_{r(\chi,n)=1}
L'_{K/k,S}(1-n,\chi^{-1})e_\chi .
\]
Unlike the value \(\theta_{K/k,S}(1-n)\), this element is not expected to be
rational in the group ring. Fixing an odd prime number \(p\) and using the inverse of the Beilinson regulator on the
rank-one component, we associate to it an element 
\[
\Theta'_{K/k,S}(1-n)
\in
\mathbb C_pK_{2n-1}(O_K).
\]
Assuming the equivariant Tamagawa number conjecture, this element is in fact
defined over \(\mathbb Q_p\). Our first main result is the following
first-derivative analogue of the theorem of Deligne and Ribet (see Theorem
\ref{Thm:annihilated-integrality-of-Theta-prime} below).

\begin{theorem}
\label{Main_Theo_01_intro}
One has
\[
\operatorname{Ann}_{\mathbb Z_p[G]}
\bigl(K_{2n-1}(O_K)_{\mathrm{tors}}\otimes_{\mathbb Z}\mathbb Z_p\bigr)
\cdot
\Theta'_{K/k,S}(1-n)
\subseteq
K_{2n-1}(O_K)_{\mathrm{tf}}\otimes_\mathbb{Z} \mathbb Z_p.
\]
\end{theorem}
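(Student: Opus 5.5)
The approach is to follow the standard ETNC-to-annihilation strategy, adapted to rank one.

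\textbf{Setting up the complex.} Write \(e=\sum_{r(\chi,n)=1}e_\chi\) for the rank-one idempotent. I would use the compact support étale complex \(C=R\Gamma_c(O_{K,S},\mathbb Z_p(1-n))\), or rather its \(\Sigma\)-modified version \(C_\Sigma\). Here \(\Sigma\) is an auxiliary finite set of places disjoint from \(S\), chosen so that \(C_\Sigma\) is perfect and its cohomology has no torsion in the degree carrying \(K_{2n-1}(O_K)\otimes\mathbb Z_p\). Then \(H^1\) (up to shifting conventions) is identified with \(K_{2n-1}(O_K)\otimes\mathbb Z_p\) via Chern class maps, and \(H^2\) contains \(K_{2n-2}(O_{K,S})\otimes\mathbb Z_p\). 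Tensoring with \(\mathbb C_p\) and applying \(e\), the complex \(eC\) has \(eH^1\) and \(eH^2\) free of rank one over \(e\mathbb C_p[G]\). The Beilinson regulator, composed with the Bloch--Kato/Soulé comparison, gives a trivialization. The relevant \(p\)-part of the ETNC then asserts that \(\vartheta(\operatorname{Det}C_\Sigma)=\mathbb Z_p[G]\cdot\theta'_{K/k,S,\Sigma}\), where \(\vartheta\) is the induced isomorphism and \(\theta'_{K/k,S,\Sigma}\) is the \(\Sigma\)-modified leading term. Under this identification, \(\Theta'_{K/k,S}(1-n)\) is the preimage of \(\theta'\) under \(e\cdot\)regulator. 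In particular it lies in \(\mathbb Q_p\otimes K_{2n-1}(O_K)\).

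\textbf{Extracting the annihilation statement.} Choose a representative \(P^0\to P^1\) (free, of equal rank \(d\)) of \(C_\Sigma\), arranged so that \(H^1\) sits as a submodule of \(P^0\) (standard Burns--Flach/Rubin argument). The determinant-to-Rubin-lattice principle then works as follows. Let \(\eta\in\operatorname{Det}C_\Sigma\) be a generator. Its image under \(\vartheta\) in rank one is the element obtained by wedging against \(d-1\) dual elements. By Rubin's integrality lemma, this element lies in the Rubin lattice \(\bigcap^1 H^1=H^1\), and concretely in \(H^1_{\mathrm{tf}}\). Thus the ETNC yields that \(\Theta'_{K/k,S,\Sigma}(1-n):=\delta_\Sigma\cdot\Theta'\) lies in \(K_{2n-1}(O_K)_{\mathrm{tf}}\otimes\mathbb Z_p\). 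Here \(\delta_\Sigma=\prod_{v\in\Sigma}(1-N v^{n}\mathrm{Fr}_v^{-1})\) is the Euler factor.

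\textbf{Cancelling the auxiliary Euler factors.} Next I would show that the \(\mathbb Z_p[G]\)-ideal generated by the \(\delta_\Sigma\), as \(\Sigma\) varies over admissible sets, equals \(\operatorname{Ann}_{\mathbb Z_p[G]}(K_{2n-1}(O_K)_{\mathrm{tors}}\otimes\mathbb Z_p)\). This is the analogue of the classical statement that such Euler factors generate the annihilator of \(H^0(K,\mathbb Q_p/\mathbb Z_p(n))\cong K_{2n-1}(O_K)_{\mathrm{tors}}\otimes\mathbb Z_p\) (Coates, Deligne--Ribet). The proof is by Chebotarev: one chooses primes \(v\) whose Frobenius in \(K(\mu_{p^\infty})/k\) realizes the needed elements, noting that \(p\) is odd so the cyclotomic character argument is clean. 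Multiplying the conclusion of the previous step by elements of this ideal gives the theorem.

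\textbf{Main obstacle.} I expect the hardest step to be the compatibility between \(\Sigma\)-modification and the rank-one projection. One must check that passing from \(C\) to \(C_\Sigma\) multiplies the leading term exactly by \(\delta_\Sigma\) on the \(e\)-component and does not change \(eH^1\) rationally. One must also check that \(H^1(C_\Sigma)\) is torsion-free, and equal to \(K_{2n-1}(O_K)_{\mathrm{tf}}\otimes\mathbb Z_p\) or a sublattice of it. I would handle this with the exact triangle relating \(C_\Sigma\), \(C\), and \(\bigoplus_{w\mid\Sigma}R\Gamma(\kappa(w),\mathbb Z_p(1-n))\), computing the finite-field cohomology explicitly as cyclic modules with Fitting ideals generated by \(\delta_\Sigma\).
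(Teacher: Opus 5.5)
Your overall architecture is the paper's. You pass to a $\Sigma$-modified complex whose degree carrying $K_{2n-1}$ is the $\mathbb Z_p$-torsion-free module $H^1_\Sigma(\operatorname{Spec}(O_{K,S}),\mathbb Z_p(n))$. You cut out the $\mathfrak e^1_{K,n}$-part and take a free representative $F\xrightarrow{\Psi}F$, and the ETNC generator $\mathfrak z$ becomes $\Delta_\Sigma\mathfrak z$. Rank reduction then puts $\Delta_\Sigma\Theta'_{K/k,S}(1-n)$ in $F\cap\mathbb C_p\ker(\Psi)=\ker(\Psi)=\mathfrak e^1_{K,n}H^1_\Sigma(\operatorname{Spec}(O_{K,S}),\mathbb Z_p(n))$, which is your rank-one Rubin lattice. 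Finally, the $\Delta_\Sigma$ generate $\operatorname{Ann}_{\mathbb Z_p[G]}(\mathbb Z_pK_{2n-1}(O_K)_{\mathrm{tors}})$; the paper quotes this from Lemma 6.9(2) of Greither--Popescu rather than redoing Chebotarev. For that last step to mesh with your ``admissible'' sets, you need every non-empty $\Sigma$ to be admissible. Lemma \ref{lem:Sigma-cohomology-torsion-free} proves exactly this: for $w\nmid p$ the restriction $H^0(K,\mathbb Q_p/\mathbb Z_p(n))\to H^0(\kappa(w),\mathbb Q_p/\mathbb Z_p(n))$ is injective.

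The step that fails as written is your construction of $C_\Sigma$. Take $C=R\Gamma_c(\operatorname{Spec}(O_{K,S}),\mathbb Z_p(1-n))$ with local terms $R\Gamma(\kappa(w),\mathbb Z_p(1-n))$. The natural map is restriction, and its fibre is $R\Gamma_c(\operatorname{Spec}(O_{K,S\cup\Sigma}),\mathbb Z_p(1-n))$, so you have merely enlarged $S$. The module carrying $K_{2n-1}$ is then $H^1(\operatorname{Spec}(O_{K,S\cup\Sigma}),\mathbb Z_p(n))=H^1(\operatorname{Spec}(O_{K,S}),\mathbb Z_p(n))$, because $H^0(\kappa(w),\mathbb Z_p(n-1))=0$ for $n\geq 2$. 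So the torsion is not removed. The Euler factor produced is $\prod_{v\in\Sigma}(1-\sigma_v^{-1}\operatorname{N}v^{n-1})$, which is adapted to $H^0(K,\mathbb Q_p/\mathbb Z_p(n-1))$ rather than to $H^0(K,\mathbb Q_p/\mathbb Z_p(n))\simeq\mathbb Z_pK_{2n-1}(O_K)_{\mathrm{tors}}$. Likewise, the Fitting ideals of twist-$(1-n)$ local terms involve $\operatorname{N}v^{n-1}$, not $\operatorname{N}v^{n}$, so $\delta_\Sigma$ never appears.

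The source is a conflation in your first paragraph. $R\Gamma_c(\operatorname{Spec}(O_{K,S}),\mathbb Z_p(1-n))$ does not have $\mathbb Z_pK_{2n-1}(O_K)$ as a cohomology group up to shift; its $\mathbb Z_p$-linear dual $\mathcal C^\bullet_{K,S}=R\Gamma_c(\operatorname{Spec}(O_{K,S}),\mathbb Z_p(1-n))^*[-2]$ does. The paper modifies $\mathcal C^\bullet_{K,S}$ by $\bigoplus_{w\in\Sigma_K}R\Gamma(\kappa(w),\mathbb Z_p(n))[1]$. It lifts the localisation map of $R\Gamma(\operatorname{Spec}(O_{K,S}),\mathbb Z_p(n))[1]$ through the Artin--Verdier triangle, which is possible because $\mathbb Z_pY^+_{K,n}$ is projective for odd $p$. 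Dually, the modified compact-support complex is the cone of the Gysin map from $\bigoplus_{w\in\Sigma_K}R\Gamma(\kappa(w),\mathbb Z_p(-n))[-2]$, so the twist is $-n$, not $1-n$. The correct local term has only cohomology $\bigoplus_{v\in\Sigma}\mathbb Z_p[G]/(1-\sigma_v^{-1}\operatorname{N}v^n)$. With it, both the factor $\Delta_\Sigma$ in the determinant and the torsion-freeness come out as you intended, and the rest of your argument goes through.
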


\medskip

\noindent We then prove a corresponding first-derivative analogue of the
Coates--Sinnott conjecture. Since
\(\Theta'_{K/k,S}(1-n)\) belongs to the rationalized odd \(K\)-group, we attach
to it a fractional \(\mathbb Z_p[G]\)-ideal
\[
\mathcal J(\Theta'_{K/k,S}(1-n))
\subseteq
\mathbb Q_p[G].
\]
It is generated by evaluating at \(\Theta'_{K/k,S}(1-n)\) the rational
extensions of integral \(\mathbb Z_p[G]\)-homomorphisms (see the beginning of
Section \ref{Coates_Sinnott_Section} for the precise definition). The second
main result is the following; see Theorem \ref{Main_Theo_02} below.

\begin{theorem}
\label{Main_Theo_02_intro}
One has
\[
\operatorname{Ann}_{\mathbb Z_p[G]}
\bigl(K_{2n-1}(O_K)_{\mathrm{tors}}\otimes_{\mathbb Z}\mathbb Z_p\bigr)
\cdot
\mathcal J(\Theta'_{K/k,S}(1-n))
\subseteq
\operatorname{Ann}_{\mathbb Z_p[G]}
\bigl(K_{2n-2}(O_{K,S})\otimes_\mathbb{Z} \mathbb Z_p\bigr).
\]
\end{theorem}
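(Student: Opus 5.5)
The plan is to reduce the annihilation statement to a Fitting-ideal computation coming from the \(p\)-part of the ETNC, working with \(\Sigma\)-modified étale complexes. First I replace the \(K\)-groups by étale cohomology. For odd \(p\), the Quillen--Lichtenbaum isomorphisms (now theorems) identify \(K_{2n-1}(O_K)\otimes\mathbb Z_p\) with \(H^1(O_{K,S}[1/p],\mathbb Z_p(n))\) and \(K_{2n-2}(O_{K,S})\otimes\mathbb Z_p\) with \(H^2(O_{K,S}[1/p],\mathbb Z_p(n))\). The torsion subgroup \(K_{2n-1}(O_K)_{\mathrm{tors}}\otimes\mathbb Z_p\) is identified with \(H^0(K,\mathbb Q_p/\mathbb Z_p(n))\).

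Next I choose an auxiliary finite set \(\Sigma\) of places disjoint from \(S\cup S_p\). I then form the \(\Sigma\)-modified compactly supported complex \(C_{S,\Sigma}=R\Gamma_{c,\Sigma}(O_{K,S},\mathbb Z_p(1-n))^\ast[-2]\), or equivalently its Tate-twisted dual. The set \(\Sigma\) is chosen so that the torsion in degree one is killed and the complex becomes perfect, with \(H^0=0\), \(H^1\) torsion-free, and \(H^2\) an extension of \(H^2(O_{K,S},\mathbb Z_p(n))\) by a module built from \(\bigoplus_{v\in\Sigma}\). Under the ETNC, the zeta element of \(C_{S,\Sigma}\) has the following description: its image under the regulator-twisted trivialization is the \(\Sigma\)-modified leading term \(\prod_{v\in\Sigma}(1-\mathrm{N}v^{\,n}\mathrm{Fr}_v^{-1})\cdot\theta^*_{K/k,S}(1-n)\). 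On the rank-one component, this is exactly \(\delta_\Sigma\cdot\Theta'_{K/k,S}(1-n)\), viewed in \(\mathbb Q_pK_{2n-1}(O_K)\), where \(\delta_\Sigma=\prod_{v\in\Sigma}(1-\mathrm N v^{n}\mathrm{Fr}_v^{-1})\).

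The core step is a determinant argument. Represent \(C_{S,\Sigma}\) by a complex \(P\to P\) of free \(\mathbb Z_p[G]\)-modules of equal rank, with kernel \(H^1\) and cokernel \(H^2\). Then, by the standard lemma (as in Burns--Kurihara--Sano on rank-one "Rubin lattice" elements), the zeta element gives an element \(\eta_\Sigma\in H^1\) with the following property: for every \(\varphi\in\operatorname{Hom}_{\mathbb Z_p[G]}(H^1,\mathbb Z_p[G])\), the value \(\varphi(\eta_\Sigma)\) lies in \(\operatorname{Fitt}^0_{\mathbb Z_p[G]}(H^2)\). I would use Theorem~\ref{Main_Theo_01_intro} here: it ensures that multiplying by \(x\in\operatorname{Ann}(K_{2n-1}(O_K)_{\mathrm{tors}}\otimes\mathbb Z_p)\) is what makes \(x\,\delta_\Sigma\Theta'\) land in the image of \(H^1_\Sigma\), i.e. in the \(\Sigma\)-modified lattice. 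Since Fitting ideals annihilate, \(x\,\delta_\Sigma\,\varphi(\Theta')\) annihilates \(H^2_\Sigma\), hence its quotient \(K_{2n-2}(O_{K,S})\otimes\mathbb Z_p\). This uses the relation between homomorphisms on \(K_{2n-1}(O_K)_{\mathrm{tf}}\otimes\mathbb Z_p\) and on \(H^1_\Sigma\), which embeds with finite index. By the definition of \(\mathcal J\), these values generate \(x\,\delta_\Sigma\,\mathcal J(\Theta'_{K/k,S}(1-n))\).

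Finally, the Euler factors are cancelled. Letting \(\Sigma\) vary, the ideal generated by the \(\delta_\Sigma\), together with \(\operatorname{Ann}(K_{2n-1}(O_K)_{\mathrm{tors}}\otimes\mathbb Z_p)\), should be handled as follows. By a Chebotarev argument, one can choose \(\Sigma\) with \(\delta_\Sigma\) a unit multiple of any prescribed annihilator of \(H^0(K,\mathbb Q_p/\mathbb Z_p(n))\), coprime in each \(\chi\)-component. As \(\Sigma\) varies, the ideals \(\delta_\Sigma\mathbb Z_p[G]\) then generate the annihilator of the torsion (Coates' lemma, as used for Deligne--Ribet). Summing over \(\Sigma\) removes \(\delta_\Sigma\) at the cost of exactly that torsion annihilator, which is the factor appearing in the statement. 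I expect this cancellation step to be the main obstacle. It is delicate because the factor \(x\) from the integrality theorem and the factor from \(\delta_\Sigma\) must not compound into a product of two annihilators. The first thing I would try is to verify that, for \(\Sigma\) chosen with \(H^1_\Sigma\) torsion-free, the integrality of \(\delta_\Sigma\Theta'\) in \(H^1_\Sigma\) holds directly without the extra \(x\), so that only one torsion annihilator survives.
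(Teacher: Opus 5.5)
Your strategy is sound and genuinely different from the paper's, but two points in it need repair. Write $e=\mathfrak e^1_{K,n}$, $\Theta'=\Theta'_{K/k,S}(1-n)$ and $I=\operatorname{Ann}_{\mathbb Z_p[G]}(\mathbb Z_pK_{2n-1}(O_K)_{\mathrm{tors}})$.

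\textbf{First repair: drop the factor $x$.} Theorem~\ref{Main_Theo_01_intro} only gives $x\Theta'\in\mathbb Z_pK_{2n-1}(O_K)_{\mathrm{tf}}$. Feeding it into the $\Sigma$-argument would only prove $I^2\cdot\mathcal J(\Theta')\subseteq\operatorname{Ann}$. The fix you suggest at the end is correct and in fact automatic. Cut out the $e$-part, which is integral since $p$ is odd. Choose a representative $F\xrightarrow{\Psi}F$ of $\widetilde{\mathcal C}^\bullet_{K,S,\Sigma}$ whose first basis vector maps onto a generator of the free summand $e\,\mathbb Z_pY^+_{K,n}$, as in Lemma~\ref{lem:quadratic-representative-Sigma-cutout}; then $\Psi_1=0$. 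The rank-reduction element $\eta_\Sigma=\pm(\Psi_2\wedge\cdots\wedge\Psi_m)(a)$ is integral and killed by $\Psi$. So it lies in $eH^1_\Sigma(\operatorname{Spec}(O_{K,S}),\mathbb Z_p(n))$ and equals $\Delta_\Sigma\Theta'$. This is Proposition~\ref{prop:Sigma-integrality-Theta-prime}, with no extra annihilator.

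\textbf{Second repair: the Fitting ideal.} The relevant ideal is not $\operatorname{Fitt}^0$ of the whole degree-two cohomology. That ideal vanishes on the $e$-component because of the free quotient coming from $Y^+_{K,n}$. Instead, extend $\varphi$ to $F$; this is possible since $\operatorname{im}\Psi$ is $\mathbb Z_p$-torsion-free and $e\mathbb Z_p[G]$ is Gorenstein. Expanding $(\varphi\wedge\Psi_2\wedge\cdots\wedge\Psi_m)(a)$ along $\varphi$ shows that $\varphi(\eta_\Sigma)$ lies in the ideal of $(m-1)$-minors of $(\Psi_j(b_\ell))_{j\geq 2}$. That ideal is $\operatorname{Fitt}^1=\operatorname{Fitt}^0_{e\mathbb Z_p[G]}\bigl(eH^2_\Sigma(\operatorname{Spec}(O_{K,S}),\mathbb Z_p(n))\bigr)$. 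It is contained in $\operatorname{Fitt}^0_{e\mathbb Z_p[G]}(e\,\mathbb Z_pK_{2n-2}(O_{K,S}))$, since $H^2_\Sigma$ surjects onto $H^2$.

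Now take $\varphi=f|_{eH^1_\Sigma}$ for $f\in\operatorname{Hom}_{\mathbb Z_p[G]}(\mathbb Z_pK_{2n-1}(O_K)_{\mathrm{tf}},\mathbb Z_p[G])$. This gives $\Delta_\Sigma\,\mathcal J(\Theta')\subseteq\operatorname{Fitt}^0(e\,\mathbb Z_pK_{2n-2}(O_{K,S}))\subseteq\operatorname{Ann}_{\mathbb Z_p[G]}(\mathbb Z_pK_{2n-2}(O_{K,S}))$ for every non-empty $\Sigma$. No special Chebotarev choice is needed. The $\Delta_\Sigma$ generate $I$ (Greither--Popescu, Lemma 6.9(2), as already used for Theorem~\ref{Thm:annihilated-integrality-of-Theta-prime}), and the target is an ideal. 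So summing over $\Sigma$ gives $I\cdot\mathcal J(\Theta')\subseteq\operatorname{Ann}$.

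\textbf{Comparison with the paper.} The paper uses the unmodified complex $\widetilde{\mathcal C}^\bullet_{K,S}$ and a single $\Sigma$. It maps $\mathcal D^\bullet_\Sigma=[\varepsilon_\Sigma\mathbb Z_p[G]\xrightarrow{0}e\mathbb Z_p[G]]$ into it and shows that $\operatorname{Cone}(\iota_\Sigma)$ has finite cohomology with $\det^{-1}=\Delta_\Sigma e\mathbb Z_p[G]$. Greither's d\'evissage and Burns--Greither's Lemma 5 then give the equality of Theorem~\ref{thm:Fitting-relation-cone-Sigma}. Gorenstein duality yields an exact sequence linking $\mathcal J(\varepsilon_\Sigma)$, the dual of $e\mathbb Z_pK_{2n-1}(O_K)/\widetilde\varepsilon_\Sigma\mathbb Z_p[G]$, and the dual of the cyclic torsion subgroup. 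In this way $I$ enters intrinsically through a Fitting ideal, and $\Delta_\Sigma$ is cancelled as a unit of $\mathbb Q_p[G]$.

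Your route is shorter: no cone, no d\'evissage, no Ext computation. It also gives the finer $\Sigma$-modified inclusion into $\operatorname{Fitt}^0(eH^2_\Sigma)$. However, it relies essentially on the whole family of $\Sigma$ and on the generation lemma for $I$. The paper's argument works with any one $\Sigma$ and produces an equality of Fitting ideals, at the cost of the determinant bookkeeping for the cone.
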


\medskip

\noindent We finally point out that the use of the equivariant Tamagawa number conjecture
also makes the results applicable beyond the classical totally real setting.
Indeed, the statements above are formulated for arbitrary finite abelian
extensions \(K/k\); they become unconditional in every case where the relevant
case of the ETNC is known. This includes, for example, many cases with \(k\)
totally real or imaginary quadratic. We recall the precise known cases needed
for this paper in Subsection \ref{ETNCnegative-weight}.

\medskip

\noindent The paper is organized as follows. Sections \ref{Notations} and \ref{preliminaries}
recall the necessary notation and background. Section \ref{The rank d component} is the main body of the paper where we introduce the relevant necessary ingredients of the proof. Subsection \ref{subsect4.2}
proves the first-derivative integrality theorem, and Subsection
\ref{Coates_Sinnott_Section} proves the Coates--Sinnott-type annihilation
result.

\section{Notation}\label{Notations}

\begin{itemize}
\item Let \(K/k\) be a finite abelian extension of number fields with Galois
group \(G\), and let \(S\) be a finite set of places of \(k\) containing all
archimedean places. We write \(O_{K,S}\) for the ring of \(S_K\)-integers of
\(K\), where \(S_K\) denotes the set of places of \(K\) lying above \(S\).

Starting from Subsection \ref{Sect_3_5}, whenever a prime number \(p\) is fixed,
we shall assume in addition that \(S\) contains all places of \(k\) which ramify
in \(K/k\), as well as all places of \(k\) lying above \(p\).

\item Let \(p\) be a prime number. If \(A\) is any \(\mathbb Z_p[G]\)-module, we
let
\[
A^\vee:=\operatorname{Hom}_{\mathbb Z_p}(A,\mathbb Q_p/\mathbb Z_p)
\]
denote the Pontryagin dual of \(A\), endowed with the contragredient
\(G\)-action.

\item We let \(\#\) denote the involution of \(\mathbb Z_p[G]\) induced by
\(g\mapsto g^{-1}\). If \(A\) is any \(\mathbb Z_p[G]\)-module, we define
\(A^\#\) to be the \(\mathbb Z_p[G]\)-module with the same underlying abelian
group as \(A\), on which \(\mathbb Z_p[G]\) acts via this involution.

\item If \(A\) is any \(\mathbb Z\)-module, we let \(A_{\mathrm{tors}}\) denote
its \(\mathbb Z\)-torsion submodule and \(A_{\mathrm{tf}}\) its
\(\mathbb Z\)-torsion-free quotient.

\item If \(A\) is a \(\mathbb Z\)-module and \(R\) is a \(\mathbb Z\)-algebra,
we may write
\[
RA:=A\otimes_{\mathbb Z}R.
\]
Similarly, if \(A\) is a \(\mathbb Z_p\)-module and \(R\) is a
\(\mathbb Z_p\)-algebra, we may write
\[
RA:=A\otimes_{\mathbb Z_p}R.
\]
The base ring of the tensor product will be clear from the context.
\end{itemize}
\section{Algebraic and Cohomological Framework}\label{preliminaries}
\subsection{Quillen's Algebraic \texorpdfstring{$K$}{K}-theory}
We briefly recall some well-known properties of Quillen's higher algebraic
\(K\)-groups associated with \(O_{K,S}\).

\begin{theorem}[Borel]
Let \(n \geq 2\) be an integer. Then the even \(K\)-groups
\(K_{2n-2}(O_{K,S})\) are finite, while the odd \(K\)-groups
\[
K_{2n-1}(O_{K,S}) = K_{2n-1}(O_K),
\]
where \(O_K\) denotes the ring of integers of \(K\), are independent of the
choice of \(S\), as long as \(S\) contains the archimedean places, and are
finitely generated over \(\mathbb Z\). Moreover, their \(\mathbb Z\)-rank is
given by
\[
\operatorname{rk}_{\mathbb Z}\bigl(K_{2n-1}(O_{K,S})\bigr)=d_n:=
\begin{cases}
r_1(K)+r_2(K), & \text{if \(n\) is odd},\\
r_2(K), & \text{if \(n\) is even},
\end{cases}
\]
where \(r_1(K)\) and \(r_2(K)\) denote respectively the number of real and
complex places of \(K\).
\end{theorem}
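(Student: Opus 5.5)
This is Borel's theorem, which the paper cites rather than proves. I will reduce it to standard inputs: Quillen's localization sequence, Quillen's computation of the $K$-theory of finite fields, Quillen's finite generation theorem, and Borel's computation of the rational $K$-groups.

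\emph{Independence of $S$ and finiteness of even $K$-groups.} For $S$ containing the archimedean places, Quillen's localization sequence for the Dedekind domain $O_{K,S}$, obtained from $O_K$ by inverting the finitely many primes $\mathfrak P\in S_K$ that are non-archimedean, reads
\[
\cdots\to \bigoplus_{\mathfrak P}K_m(O_K/\mathfrak P)\to K_m(O_K)\to K_m(O_{K,S})\to \bigoplus_{\mathfrak P}K_{m-1}(O_K/\mathfrak P)\to\cdots.
\]
Quillen's computation gives $K_{2i}(\mathbb F_q)=0$ for $i\geq1$ and $K_{2i-1}(\mathbb F_q)\cong\mathbb Z/(q^i-1)$.

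\begin{itemize}
\item Take $m=2n-1$ with $n\geq2$. Then $K_{2n-1}(\mathbb F)$ and $K_{2n-2}(\mathbb F)$ both vanish except for the odd group $K_{2n-1}(\mathbb F)$. This gives an exact piece
\[
0\to K_{2n-1}(O_K)\to K_{2n-1}(O_{K,S})\to \bigoplus_{\mathfrak P} K_{2n-2}(O_K/\mathfrak P)=0,
\]
and the left-hand injectivity comes from $K_{2n}(O_{K,S})\to\bigoplus K_{2n-1}(\mathbb F)\to K_{2n-1}(O_K)$.
\item That last map is zero rationally. Integrally, I expect to need the known fact that the boundary maps $K_{2n}(O_{K,S})\to\bigoplus_{\mathfrak P}K_{2n-1}(O_K/\mathfrak P)$ are surjective (Soulé's result, via étale Chern characters). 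This is the most delicate point, so I would cite it.
\end{itemize}

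Finite generation follows from Quillen's theorem that $K_m(O_K)$ is finitely generated. Since the finite-field terms are finite, the localization sequence also makes $K_{2n-2}(O_{K,S})$ finitely generated.

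\emph{Ranks.} Both the parity of ranks and the finiteness of even groups then reduce to Borel's computation $K_m(O_K)\otimes\mathbb Q\cong K_m(\mathbb C)^{+}$-type invariants:
\[
\dim_{\mathbb Q}K_{m}(O_K)\otimes\mathbb Q=\begin{cases}0,& m=2n-2\geq2,\\ d_n,& m=2n-1,\ n\geq2.\end{cases}
\]
This comes from the continuous cohomology of $SL_N(\mathbb R)$ and $SL_N(\mathbb C)$. Concretely, the stable real cohomology of $\prod_v SL_N(K_v)$ modulo arithmetic subgroups is an exterior algebra. Each real place contributes primitive generators in degrees $5,9,13,\dots$, while each complex place contributes them in degrees $3,5,7,\dots$. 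Via the Milnor–Moore theorem, the primitive part gives the rational homotopy of $BGL(O_K)^+$.

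In degree $2n-1$ this gives $r_1+r_2$ generators when $n$ is odd and $r_2$ when $n$ is even; there are no even-degree primitives. Finitely generated together with rank $0$ then gives finiteness of $K_{2n-2}(O_{K,S})$.

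The main obstacle is Borel's stable cohomology computation together with its identification with rational $K$-theory. In this paper it is taken as input from \cite{Borel}-style references rather than reproved, so the proof amounts to assembling these citations with the localization argument above.
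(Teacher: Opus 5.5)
Your proposal is correct and consistent with the paper, which gives no proof of this statement and only recalls it as a well-known result. Your assembly is the standard derivation behind that citation: Quillen's localization sequence with the $K$-theory of finite fields, Quillen's finite generation theorem, Soulé's surjectivity of the boundary maps $K_{2n}(O_{K,S})\to\bigoplus_{\mathfrak P}K_{2n-1}(O_K/\mathfrak P)$, and Borel's stable cohomology computation. Soulé's input is needed only for the integral equality $K_{2n-1}(O_{K,S})=K_{2n-1}(O_K)$, not for the ranks.

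Two small points to tidy. First, rank zero for $K_{2n-2}(O_{K,S})$ itself, rather than for $K_{2n-2}(O_K)$, follows from the same localization sequence because the finite-field terms are finite. Second, the phrase ``$K_m(\mathbb C)^{+}$-type invariants'' is inaccurate and should be replaced by the precise dimension count you give afterwards, equivalently $\dim_{\mathbb R}\mathbb R Y^{+}_{K,n}=d_n$.
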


\medskip

\noindent Fix a rational prime \(p\). For any integer \(i \geq 0\), we write
\[
H^i_{\mathrm{\acute{e}t}}\bigl(\operatorname{Spec}(O_{K,S}),\mathbb Z_p(n)\bigr)
\]
for the corresponding étale cohomology groups. Assume now that \(S\)
contains also all places of \(k\) lying above \(p\). The Quillen--Lichtenbaum
conjecture, proved by Voevodsky \cite{Voevodsky}, Rognes--Weibel
\cite{RognesWeibel}, and Kahn \cite{Kahn}, identifies algebraic \(K\)-theory
with étale cohomology in the following sense: for \(i=1,2\), one has canonical
isomorphisms
\[
\mathbb Z_pK_{2n-i}(O_{K,S})
\cong
H^i_{\mathrm{\acute{e}t}}(\operatorname{Spec}(O_{K,S}),\mathbb Z_p(n))
\]
if \(p\) is odd, and also if \(p=2\) and \(K\) is totally imaginary. In the case
\(p=2\), this follows from the comparison theorem for the Chern class
homomorphism
\[
\mathrm{ch}^K_{n,i}:
\mathbb Z_pK_{2n-i}(O_{K,S})
\longrightarrow
H^i_{\mathrm{\acute{e}t}}(\operatorname{Spec}(O_{K,S}),\mathbb Z_p(n)),
\]
whose kernel and cokernel are controlled by the contribution of the real
places; see \cite{RognesWeibel,Kahn}.
\subsection{The Beilinson regulator map}\label{Sec2.2}

Let \(n \geq 2\) be an integer. Define
\[
Y_{K,n}:=
\bigoplus_{\sigma \in \operatorname{Hom}(K,\mathbb C)}
(2\pi i)^{n-1}\mathbb Z,
\]
where \(\operatorname{Hom}(K,\mathbb C)\) denotes the set of complex embeddings
of \(K\). Let \(c\) denote complex conjugation on \(\mathbb C\). This latter acts on \(Y_{K,n}\) by sending the summand indexed by \(\sigma\) to
the summand indexed by \(c\circ \sigma\), together with the sign induced by
\[
\overline{(2\pi i)^{n-1}}=(-1)^{n-1}(2\pi i)^{n-1}.
\]
We denote by \(Y_{K,n}^{+}\) the submodule of invariants for this action.
\noindent The Beilinson regulator map associated with \(K\) and \(n\) is the homomorphism
\[
r^B_{K,n}:K_{2n-1}(O_K)\longrightarrow \mathbb R Y_{K,n}^{+},
\]
as constructed for instance in \cite{Rapoport}. This map is
\(\mathbb Z[G]\)-linear with respect to the natural action of \(G\), its image
is a full \(\mathbb Z\)-lattice in the real vector space
\(\mathbb R Y_{K,n}^{+}\), and its kernel coincides with the torsion subgroup
\(K_{2n-1}(O_K)_{\mathrm{tors}}\).
Consequently, after extension of scalars, \(r^B_{K,n}\) induces an isomorphism
of \(\mathbb C[G]\)-modules
\[
\widetilde r^B_{K,n}:
\mathbb C K_{2n-1}(O_K)
\xrightarrow{\sim}
\mathbb C Y_{K,n}^{+}.
\]
\subsection{Equivariant Artin \texorpdfstring{$L$}{L}-functions}\label{Sec2.3}
In this subsection we assume  that \(S\) contains all places ramified in \(K/k\) (in addition to the archimedean
places \(S_\infty\) which $S$ is always assumed to contain). Denote by
\[
\widehat G:=\operatorname{Hom}(G,\mathbb C^\times)
\]
the group of complex-valued characters of \(G\).
For a character \(\chi\in\widehat G\) and a (possibly empty) finite set \(\Sigma\) of places of \(k\)
disjoint from \(S\), the \(S\)-truncated, \(\Sigma\)-modified Artin \(L\)-function
attached to \(\chi\) is defined, for \(\operatorname{Re}(s)>1\), by
\[
L_{K/k,S,\Sigma}(s,\chi)
=
\prod_{\mathfrak p\notin S}
\bigl(1-\chi(\sigma_{\mathfrak p})\operatorname{N}\mathfrak p^{-s}\bigr)^{-1}
\prod_{\mathfrak p\in \Sigma}
\bigl(1-\chi(\sigma_{\mathfrak p})\operatorname{N}\mathfrak p^{1-s}\bigr),
\]
where \(\sigma_{\mathfrak p}\) denotes the Frobenius element at
\(\mathfrak p\). This function admits a meromorphic continuation to
\(\mathbb C\), with at most a simple pole at \(s=1\), occurring only when
\(\chi\) is the trivial character. When \(\Sigma=\varnothing\), we simply write
\(L_{K/k,S}(s,\chi)\).

\noindent The functional equation of these \(L\)-functions implies, see for instance
\cite[Proof of Lem.~6.14]{GreitherPopescu}, that for any \(n\geq 2\) one has
\begin{equation}\label{eq:35}
r(\chi,n):=\operatorname{ord}_{s=1-n}L_{K/k,S,\Sigma}(s,\chi)
=
\begin{cases}
r_2(k)+a(\chi)^+, & \text{if \(n\) is odd},\\[4pt]
r_2(k)+a(\chi)^-, & \text{if \(n\) is even},
\end{cases}
\end{equation}
where \(r_2(k)\) denotes the number of complex places of \(k\), and if we write
\(S_{\infty,\mathbb R}(k)\) for the set of real archimedean places of \(k\),
and \(D_v\) for the decomposition subgroup at a place \(v\), then
\[
a(\chi)^+
:=
\#\{\,v\in S_{\infty,\mathbb R}(k)\mid \chi(D_v)=1\,\},
\qquad
a(\chi)^-
:=
\#\{\,v\in S_{\infty,\mathbb R}(k)\mid \chi(D_v)\neq 1\,\}.
\]
Thus
\[
a(\chi)^+ + a(\chi)^- = r_1(k),
\]
where \(r_1(k)\) denotes the number of real places of \(k\).
For \(\chi\in\widehat G\), let
\[
e_\chi:=
\frac{1}{|G|}\sum_{\sigma\in G}\chi(\sigma)\sigma^{-1}
\]
be the associated idempotent in \(\mathbb C[G]\). The equivariant
\(L\)-function, or Stickelberger element, attached to \((K/k,S,\Sigma)\) is defined
by
\[
\theta_{K/k,S,\Sigma}(s)
:=
\sum_{\chi\in\widehat G}L_{K/k,S,\Sigma}(s,\chi^{-1})e_\chi.
\]
These elements interpolate the values of Artin \(L\)-functions and may be
viewed as equivariant analogues of the Dedekind zeta function. For any integer \(n\geq 2\), we let
\[
\theta^*_{K/k,S,\Sigma}(1-n)
\]
denote the leading term of the function \(\theta_{K/k,S,\Sigma}(s)\) at
\(s=1-n\).
\subsection{Cohomology with compact support and perfect complexes}\label{Sect_3_5}
\textit{We assume from now on that the set  \(S\)
contains the archimedean places, the places which ramify in $K/k$ and the places of \(k\) lying above \(p\).}

\noindent For any integer \(n\geq 2\),
we write
\[
R\Gamma_{\mathrm{\acute{e}t}}
\bigl(\operatorname{Spec}(O_{K,S}),\mathbb Z_p(n)\bigr)
\]
for the corresponding complex of étale cohomology. For \(w\in S_K\), we write
\(R\Gamma(K_w,\mathbb Z_p(n))\) for the corresponding local cohomology complex.
These complexes are regarded as objects in the derived category  $D(\mathbb Z_p[G])$ of complexes of
\(\mathbb Z_p[G]\)-modules.

\noindent Recall that, in this setting, the complex of cohomology with compact support is
defined by
\[
R\Gamma_c(\operatorname{Spec}(O_{K,S}),\mathbb Z_p(n))
:=
\operatorname{Cone}
\left(
R\Gamma_{\mathrm{\acute{e}t}}(\operatorname{Spec}(O_{K,S}),\mathbb Z_p(n))
\longrightarrow
\bigoplus_{w\in S_K}R\Gamma(K_w,\mathbb Z_p(n))
\right)[-1],
\]
see for instance \cite[Eq.~(3)]{BurnsFlach2}.
If \(A\) is a \(\mathbb Z_p[G]\)-module, we write
\[
A^*:=\operatorname{Hom}_{\mathbb Z_p}(A,\mathbb Z_p).
\]
Similarly, if \(C^\bullet\) is a complex of \(\mathbb Z_p[G]\)-modules, we write
\[
(C^\bullet)^*
:=
\operatorname{RHom}_{\mathbb Z_p}(C^\bullet,\mathbb Z_p).
\]
Let \(\mathcal D^p(\mathbb Z_p[G])\) denote the full subcategory of the derived
category consisting of perfect complexes of \(\mathbb Z_p[G]\)-modules. 
For any integer $n\geq 2$, we set 
\[\mathcal{C}_{K,S}^\bullet:=R\Gamma_c(\operatorname{Spec}(O_{K,S}),\mathbb Z_p(1-n))^*[-2],
\]
and recall the following result.
\begin{proposition}\label{PROP-2-4}
One has
\[
\mathcal{C}_{K,S}^\bullet
\in
\mathcal D^p(\mathbb Z_p[G]).
\]
Moreover, if either \(p\neq 2\) or \(K\) is totally imaginary, then the complex
\(\mathcal{C}_{K,S}^\bullet\) is acyclic outside degrees
\(0\) and \(1\). Its cohomology groups are described as follows:
\[
H^0\bigl(\mathcal{C}_{K,S}^\bullet)
=
H^1_{\mathrm{\acute{e}t}}
\bigl(\operatorname{Spec}(O_{K,S}),\mathbb Z_p(n)\bigr),
\]
and there is an exact sequence of \(\mathbb Z_p[G]\)-modules
\[
0
\longrightarrow
H^2_{\mathrm{\acute{e}t}}
\bigl(\operatorname{Spec}(O_{K,S}),\mathbb Z_p(n)\bigr)
\longrightarrow
H^1(\mathcal{C}_{K,S}^\bullet)
\longrightarrow
\mathbb{Z}_pY_{K,n}^{+}
\longrightarrow
0.
\]
\end{proposition}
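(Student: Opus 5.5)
The plan is to deduce both assertions from Artin--Verdier (Poitou--Tate) duality, applied to the complex \(R\Gamma_c(\operatorname{Spec}(O_{K,S}),\mathbb Z_p(1-n))\). We then compute the cohomology of \(\mathcal C^\bullet_{K,S}\) through the long exact sequence of the cone defining compactly supported cohomology.

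\emph{Perfectness.} Since \(S\) contains all places above \(p\) and all ramified places, \(R\Gamma_{\mathrm{\acute{e}t}}(\operatorname{Spec}(O_{K,S}),\mathbb Z_p(1-n))\) is perfect over \(\mathbb Z_p[G]\). This is the standard consequence of \(O_{K,S}\) being a \(G\)-Galois étale cover of \(O_{k,S}\) of finite cohomological dimension for \(p\) odd. The complexes \(R\Gamma(K_w,\mathbb Z_p(1-n))\) are perfect by the analogous local statement, summed over the \(w\) above a fixed \(v\) as induced modules. At the archimedean places with \(p=2\), one uses Tate cohomology and the usual modified complex, as in \cite{BurnsFlach2}. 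Cones and shifts of perfect complexes are perfect, and \(\operatorname{RHom}_{\mathbb Z_p}(-,\mathbb Z_p)\) preserves perfectness over \(\mathbb Z_p[G]\), since \(\mathbb Z_p[G]^*\cong\mathbb Z_p[G]\). Hence \(\mathcal C^\bullet_{K,S}\) is perfect.

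\emph{Cohomology via duality.} Artin--Verdier duality gives a quasi-isomorphism
\[
R\Gamma_c(\operatorname{Spec}(O_{K,S}),\mathbb Z_p(1-n))^*[-3]\simeq R\Gamma_{\mathrm{\acute{e}t}}(\operatorname{Spec}(O_{K,S}),\mathbb Z_p(n)).
\]
We use it in its modified form, which accounts for the archimedean contribution. Here the archimedean places are not killed by \(\mathbb Z_p\)-coefficients, and that is exactly where \(Y_{K,n}^+\) arises. Concretely, I would write the exact triangle
\[
R\Gamma_c\to R\Gamma\to \bigoplus_{w\in S_K}R\Gamma(K_w,-),
\]
split the archimedean terms off, and dualize. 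Taking \(\operatorname{RHom}(-,\mathbb Z_p)[-2]\) produces a triangle
\[
R\Gamma_{\mathrm{\acute{e}t}}(O_{K,S},\mathbb Z_p(n))[1]\to \mathcal C^\bullet_{K,S}\to \Big(\bigoplus_{w\mid\infty}R\Gamma(K_w,\mathbb Z_p(1-n))\Big)^*[-2]\to.
\]
For \(w\) complex, \(H^0(K_w,\mathbb Z_p(1-n))=\mathbb Z_p(1-n)\). For \(w\) real, \(H^0\) is the invariants of complex conjugation acting by \((-1)^{1-n}\), and the higher \(H^i\) are \(2\)-torsion, hence zero when \(p\) is odd. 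Collecting these over \(w\mid\infty\), the \(\mathbb Z_p\)-dual of \(\bigoplus_w H^0(K_w,\mathbb Z_p(1-n))\) identifies with \(\mathbb Z_pY^+_{K,n}\). The sign \((-1)^{n-1}\) in the definition of \(Y_{K,n}^+\) matches the action of conjugation on the Tate twist.

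\emph{Reading off degrees.} \(R\Gamma_{\mathrm{\acute{e}t}}(O_{K,S},\mathbb Z_p(n))\) has cohomology only in degrees \(1,2\) for \(n\ge2\). Degree \(0\) vanishes because \(H^0(\mathbb Z_p(n))=0\) for \(n\neq0\), and \(\mathrm{cd}_p=2\) for \(p\) odd or \(K\) totally imaginary. After the shift by \(1\), this piece lives in degrees \(0,1\), while the archimedean term is concentrated in degree \(1\) after the shifts. The long exact sequence therefore gives acyclicity outside degrees \(0,1\), the identification \(H^0=H^1_{\mathrm{\acute{e}t}}(\mathbb Z_p(n))\), and the stated short exact sequence for \(H^1\).

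\emph{Main obstacle.} The delicate step is bookkeeping: getting the shifts correct in the dualized triangle, and confirming that the connecting map into \(H^2(\mathcal C^\bullet_{K,S})\) vanishes. The latter holds because the term it would land in is zero. Alongside this, one must check that the archimedean dual is exactly \(\mathbb Z_pY^+_{K,n}\) with the correct sign convention. For the precise normalization of the duality, including the archimedean terms, I would follow \cite[Eq.~(3)]{BurnsFlach2}.
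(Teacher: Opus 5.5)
Your route is essentially the paper's. The paper quotes the archimedean-modified Artin--Verdier triangle of \cite[Proposition~4.1]{BurnsFlach2}. Its correction terms $R\Gamma_\Delta(K_w,\mathbb Z_p(1-n))$ reduce to $R\Gamma(K_w,\mathbb Z_p(1-n))$ when $p\neq 2$ or $K$ is totally imaginary. You instead rebuild the same triangle by splitting the archimedean terms off $R\Gamma_c$ and applying Poitou--Tate duality to the complex $R\Gamma_{c,f}$, which has compact support only at the finite places of $S_K$. This is the only version for which your first displayed quasi-isomorphism is literally true. The identification of the archimedean dual with $\mathbb Z_pY^+_{K,n}$ and the use of $\mathrm{cd}_p=2$ are as in the paper.

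There is, however, a shift error in your displayed triangle. Write $L_\infty:=\bigoplus_{w\mid\infty}R\Gamma(K_w,\mathbb Z_p(1-n))$. The archimedean terms enter as $L_\infty[-1]\to R\Gamma_c\to R\Gamma_{c,f}\to L_\infty$, since they contribute to $H^1_c$. Dualizing therefore produces $L_\infty^*[1]$, and after the shift $[-2]$ the third term is $L_\infty^*[-1]$, not $L_\infty^*[-2]$.

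Taken literally, your version gives $H^1(\mathcal C^\bullet_{K,S})\cong H^2_{\mathrm{\acute{e}t}}(\operatorname{Spec}(O_{K,S}),\mathbb Z_p(n))$ and $H^2(\mathcal C^\bullet_{K,S})\cong\mathbb Z_pY^+_{K,n}$, which contradicts the statement. Your later remark that the archimedean term sits in degree $1$ is the correct one. With $L_\infty^*[-1]$, the surjectivity of $H^1(\mathcal C^\bullet_{K,S})\to\mathbb Z_pY^+_{K,n}$ follows from $H^3_{\mathrm{\acute{e}t}}(\operatorname{Spec}(O_{K,S}),\mathbb Z_p(n))=0$, and so does the vanishing of $H^2(\mathcal C^\bullet_{K,S})$.

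Finally, your perfectness argument via ``cones of perfect complexes'' only covers $p$ odd or $K$ totally imaginary. For $p=2$ and $K$ with real places, neither $R\Gamma_{\mathrm{\acute{e}t}}(\operatorname{Spec}(O_{K,S}),\mathbb Z_2(1-n))$ nor the real local complexes are perfect, because their cohomology is unbounded. For the unconditional first assertion you therefore need the result the paper cites, \cite[Proposition~1.20]{BurnsFlach}.
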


\begin{proof}
The fact that
\[
R\Gamma_c(\operatorname{Spec}(O_{K,S}),\mathbb Z_p(1-n))^*[-2]
\in
\mathcal D^p(\mathbb Z_p[G])
\]
follows from the perfection of the compactly supported cohomology complex
\(R\Gamma_c(\operatorname{Spec}(O_{K,S}),\mathbb Z_p(1-n))\), see
\cite[Proposition~1.20]{BurnsFlach}, together with the fact that the
\(\mathbb Z_p\)-linear dual of a perfect complex of \(\mathbb Z_p[G]\)-modules
is again perfect; see, for example, \cite[\S 3]{BunsG1}.

\noindent We next compute its cohomology. By Artin--Verdier duality at the level of
complexes, one has a distinguished triangle (see \cite[Proposition~4.1]{BurnsFlach2})
\[
\bigoplus_{w\in S_\infty(K)}
R\Gamma_\Delta(K_w,\mathbb Z_p(1-n))^*[-3]
\longrightarrow
R\Gamma_{\mathrm{\acute{e}t}}(\operatorname{Spec}(O_{K,S}),\mathbb Z_p(n))
\longrightarrow
R\Gamma_c(\operatorname{Spec}(O_{K,S}),\mathbb Z_p(1-n))^*[-3].
\]
 Here \(S_\infty(K)\) denotes the set
of archimedean places of \(K\), and, for \(w\in S_\infty(K)\), the complex
\(R\Gamma_\Delta(K_w,\mathbb Z_p(1-n))\) is defined by (see \cite[Eq. (82)]{BurnsFlach2})
\[
R\Gamma_\Delta(K_w,\mathbb Z_p(1-n))
:=
\operatorname{Cone}
\left(
R\Gamma(K_w,\mathbb Z_p(1-n))
\longrightarrow
R\Gamma_{\mathrm{Tate}}(K_w,\mathbb Z_p(1-n))
\right)[-1],
\]
where \(R\Gamma_{\mathrm{Tate}}(K_w,\mathbb Z_p(1-n))
\) denotes the complex of modified Tate cohomology. Assume now that either \(p\neq 2\) or \(K\) is totally imaginary. For an
archimedean place \(w\), the local Tate cohomology groups occurring in
\(R\Gamma_{\mathrm{Tate}}(K_w,\mathbb Z_p(1-n))\) vanish under this assumption.
Hence
\[
R\Gamma_\Delta(K_w,\mathbb Z_p(1-n))
\simeq
R\Gamma(K_w,\mathbb Z_p(1-n)).
\]
The latter complex is concentrated in degree \(0\), with cohomology
\[
H^0(K_w,\mathbb Z_p(1-n)).
\]
As \(w\) runs through the archimedean places of \(K\), these groups form the
\(\mathbb Z_p\)-linear dual of \(\mathbb{Z}_pY_{K,n}^{+}\).
More precisely, the natural pairing between the twists \(1-n\) and \(n-1\)
induces an isomorphism
\[
\bigoplus_{w\in S_\infty(K)}
H^0(K_w,\mathbb Z_p(1-n))
\cong
\operatorname{Hom}_{\mathbb Z_p}
\left(
\mathbb{Z}_pY_{K,n}^{+},
\mathbb Z_p
\right).
\]
Therefore,
\[
\left(
\bigoplus_{w\in S_\infty(K)}
R\Gamma_\Delta(K_w,\mathbb Z_p(1-n))
\right)^*[-2]
\simeq
\left(\mathbb{Z}_pY_{K,n}^{+}\right)[-2].
\]
Consequently, the Artin--Verdier distinguished triangle above yields
\begin{equation}\label{étale_to_compact-support}
R\Gamma_{\mathrm{\acute{e}t}}(\operatorname{Spec}(O_{K,S}),\mathbb Z_p(n))[1]
\longrightarrow
R\Gamma_c(\operatorname{Spec}(O_{K,S}),\mathbb Z_p(1-n))^*[-2]
\longrightarrow
\left(\mathbb{Z}_pY_{K,n}^{+}\right)[-1].
\end{equation}
The computation of the cohomology of $\mathcal{C}_{K,S}^\bullet$ follows from this distinguished triangle.\qed
\end{proof}
\section{The rank $d$ component and some key results}\label{The rank d component}

For every prime number \(p\), we fix, once and for all, an isomorphism
\(\mathbb C\simeq \mathbb C_p\). Via this choice, we shall regard complex
Artin \(L\)-values and the Beilinson regulator as \(p\)-adic objects. In
particular, by abuse of notation, we shall again write
\[
\widetilde r^B_{K,n}:
\mathbb C_p K_{2n-1}(O_K)
\xrightarrow{\sim}
\mathbb C_p Y_{K,n}^{+}
\]
for the induced isomorphism of \(\mathbb C_p[G]\)-modules. Equivalently, this
is obtained from the Beilinson regulator map
\[
r^B_{K,n}:K_{2n-1}(O_K)\longrightarrow \mathbb C_p Y_{K,n}^{+}
\]
after extension of scalars to \(\mathbb C_p\).
For any integer \(d\geq 0\), we consider the idempotent
\[
\mathfrak e^d_{K,n}
:=
\sum_{\substack{\chi\in\widehat G\\ r(\chi,n)=d}}e_\chi
\in
\mathbb C_p[G],
\]
where \(r(\chi,n)\) is defined in \eqref{eq:35}. 
For a subgroup \(H\) of \(G\), we put
\(\
N_H:=\sum_{\sigma\in H}\sigma
\).
If \(v\in S_{\infty,\mathbb R}(k)\), then \(D_v\) is either trivial or of
order \(2\). This defines two idempotents of \(\mathbb Z[1/2][G]\) by
\[
\varepsilon_v^{+}:=\frac{N_{D_v}}{|D_v|},
\qquad
\varepsilon_v^{-}:=1-\frac{N_{D_v}}{|D_v|}.
\]
Thus \(\varepsilon_v^{+}\) is the projector onto the characters which are
trivial on \(D_v\), while \(\varepsilon_v^{-}\) is the projector onto the
characters which are non-trivial on \(D_v\).

\begin{proposition}
Let \(d\geq 0\) be an integer and put
\(
q:=d-r_2(k)
\). Then the following statements hold.
\begin{itemize}
\item If \(q<0\) or \(q>r_1(k)\), then:
\(
\;\mathfrak e^d_{K,n}=0.
\)
\item If \(0\leq q\leq r_1(k)\) and \(n\) is odd, then
\[
\mathfrak e^d_{K,n}
=
\sum_{\substack{A\subseteq S_{\infty,\mathbb R}(k)\\ |A|=q}}
\left(
\prod_{v\in A}\varepsilon_v^{+}
\right)
\left(
\prod_{v\in S_{\infty,\mathbb R}(k)\setminus A}\varepsilon_v^{-}
\right).
\]
\item If \(0\leq q\leq r_1(k)\) and \(n\) is even, then
\[
\mathfrak e^d_{K,n}
=
\sum_{\substack{A\subseteq S_{\infty,\mathbb R}(k)\\ |A|=q}}
\left(
\prod_{v\in A}\varepsilon_v^{-}
\right)
\left(
\prod_{v\in S_{\infty,\mathbb R}(k)\setminus A}\varepsilon_v^{+}
\right).
\]
\item In particular, if \(p\neq 2\), then:
\(
\;\mathfrak e^d_{K,n}\in \mathbb Z_p[G].
\)
\end{itemize}
\end{proposition}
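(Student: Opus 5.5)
The plan is to compute the idempotent \(\mathfrak e^d_{K,n}\) character by character, using the formula \eqref{eq:35} for \(r(\chi,n)\). Since \(\chi\mapsto r(\chi,n)\) takes only the values \(r_2(k)+a\) with \(0\le a\le r_1(k)\), the set of \(\chi\) with \(r(\chi,n)=d\) is empty when \(q<0\) or \(q>r_1(k)\). In that case \(\mathfrak e^d_{K,n}\) is an empty sum and equals \(0\), which gives the first item.

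For the second and third items, fix \(0\le q\le r_1(k)\) and let \(\Pi\) denote the proposed right-hand side. Both \(\mathfrak e^d_{K,n}\) and \(\Pi\) lie in \(\mathbb C_p[G]\), and \(G\) is abelian, so it suffices to check that \(\chi(\mathfrak e^d_{K,n})=\chi(\Pi)\) for every \(\chi\in\widehat G\). Here \(\chi\) is extended linearly to an algebra homomorphism \(\mathbb C_p[G]\to\mathbb C_p\), and the two elements agree because \(\mathbb C_p[G]\cong\prod_\chi\mathbb C_p\).

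On the left we have \(\chi(\mathfrak e^d_{K,n})=1\) if \(r(\chi,n)=d\), and \(0\) otherwise. To see this, note that with the convention \(e_\chi=|G|^{-1}\sum_\sigma\chi(\sigma)\sigma^{-1}\), one has \(\psi(e_\chi)=\delta_{\psi,\chi}\) by orthogonality.

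On the right, \(\chi(N_{D_v}/|D_v|)\) equals \(1\) if \(\chi(D_v)=1\) and \(0\) otherwise, since it averages \(\chi\) over the group \(D_v\). Hence \(\chi(\varepsilon_v^{+})=[\chi(D_v)=1]\) and \(\chi(\varepsilon_v^{-})=[\chi(D_v)\neq1]\). A summand indexed by \(A\) therefore evaluates to \(1\) exactly when the set of \(v\) with \(\chi(D_v)=1\) is \(A\) (for \(n\) odd), or its complement is \(A\) (for \(n\) even), and to \(0\) otherwise. The two cases are swapped because \(r(\chi,n)\) uses \(a(\chi)^+\) for odd \(n\) and \(a(\chi)^-\) for even \(n\).

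Summing over \(A\) with \(|A|=q\), at most one subset contributes, namely \(A=\{v:\chi(D_v)=1\}\) in the odd case, or \(\{v:\chi(D_v)\neq1\}\) in the even case. That subset contributes precisely when its cardinality \(a(\chi)^{\pm}\) equals \(q\), which is the condition \(r(\chi,n)=d\). This matches the left side and proves the two formulas.

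For the last item, if \(p\neq 2\) then \(|D_v|\in\{1,2\}\) is a unit in \(\mathbb Z_p\). So \(\varepsilon_v^{\pm}\in\mathbb Z[1/2][G]\subseteq\mathbb Z_p[G]\), and the explicit formula expresses \(\mathfrak e^d_{K,n}\) as a sum of products of such elements, hence lies in \(\mathbb Z_p[G]\). In the vanishing case this is trivial.

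No step here is a real obstacle. The only points needing care are the convention \(e_\chi\leftrightarrow\chi\) versus \(\chi^{-1}\), which is harmless since the conditions \(\chi(D_v)=1\) are invariant under \(\chi\mapsto\chi^{-1}\), and the compatibility of the chosen identification \(\mathbb C\simeq\mathbb C_p\) with characters. The latter only moves values of \(\chi\) among roots of unity and so does not affect which characters are trivial on \(D_v\).
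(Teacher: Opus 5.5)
Your proof is correct and follows essentially the paper's argument: the paper also works character by character, computing \(e_\chi\varepsilon_v^{\pm}\) (equivalent to your evaluation \(\chi(\varepsilon_v^{\pm})\)), observing that the summand indexed by \(A\) cuts out exactly the characters with \(\{v:\chi(D_v)=1\}=A\) (resp.\ its complement for \(n\) even), and deducing integrality from \(|D_v|\in\{1,2\}\). Your explicit treatment of the case \(q<0\) or \(q>r_1(k)\) as an empty sum fills a step the paper leaves implicit.
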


\begin{proof}
Let \(v\in S_{\infty,\mathbb R}(k)\). Since \(K/k\) is abelian, the subgroup
\(D_v\) is well-defined independently of the choice of a place of \(K\) above
\(v\). For any character \(\chi\in\widehat G\), one has
\[
e_\chi\varepsilon_v^{+}
=
\begin{cases}
e_\chi, & \text{if \(\chi(D_v)=1\),}\\
0, & \text{otherwise,}
\end{cases} \qquad \text{and} \qquad
e_\chi\varepsilon_v^{-}
=
\begin{cases}
0, & \text{if \(\chi(D_v)=1\),}\\
e_\chi, & \text{otherwise.}
\end{cases}
\]
Indeed,
\(
e_\chi\frac{N_{D_v}}{|D_v|}
=
\frac{1}{|D_v|}\sum_{\tau\in D_v}\chi(\tau)e_\chi,
\)
which is equal to \(e_\chi\) if \(\chi\) is trivial on \(D_v\), and to \(0\)
otherwise.
Assume first that \(n\) is odd. By \eqref{eq:35}, one has
\[
r(\chi,n)=r_2(k)+a(\chi)^+.
\]
Thus the condition \(r(\chi,n)=d\) is equivalent to: 
\(
a(\chi)^+=q.
\)
For a fixed subset \(A\subseteq S_{\infty,\mathbb R}(k)\), the idempotent
\[
\left(
\prod_{v\in A}\varepsilon_v^{+}
\right)
\left(
\prod_{v\in S_{\infty,\mathbb R}(k)\setminus A}\varepsilon_v^{-}
\right)
\]
projects precisely onto those characters \(\chi\) such that
\[
\chi(D_v)=1 \quad \text{for \(v\in A\)}
\qquad \text{and}\qquad
\chi(D_v)\neq 1 \quad \text{for \(v\notin A\)}.
\]
Summing over all subsets \(A\) of cardinality \(q\) therefore gives exactly the
sum of the idempotents \(e_\chi\) for which \(r(\chi,n)=d\). This proves the
formula for \(n\) odd.
\vskip 5pt

\noindent The case where \(n\) is even is identical, except that \eqref{eq:35} gives
\[
r(\chi,n)=r_2(k)+a(\chi)^-.
\]
Thus one has to project onto the characters which are non-trivial on \(D_v\) at
exactly \(q\) real places \(v\), which gives the stated formula.

\noindent Finally, for an archimedean real place \(v\), the group \(D_v\) has order
\(1\) or \(2\). Hence, if \(p\neq 2\), both \(\varepsilon_v^{+}\) and
\(\varepsilon_v^{-}\) belong to \(\mathbb Z_p[G]\). The displayed formulae then
show that
\(
\mathfrak e^d_{K,n}\in \mathbb Z_p[G].
\)\qed
\end{proof}
\begin{lemma}
\label{lem:Y-free-over-ed}
With the same notations as before, let \(p\) be an odd prime and \(d\geq 0\). 
Then
there is an isomorphism of \(\mathfrak e_{K,n}^d\mathbb Z_p[G]\)-modules
\[
\mathfrak e_{K,n}^d
\bigl(\mathbb{Z}_pY_{K,n}^{+}\bigr)
\cong
\bigl(\mathfrak e_{K,n}^d\mathbb Z_p[G]\bigr)^d.
\]
In particular, if \(\mathfrak e_{K,n}^d=0\), then both sides are zero.
\end{lemma}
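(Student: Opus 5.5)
We describe \(\mathbb Z_pY_{K,n}^{+}\) explicitly as an induced module and then decompose it character by character. Group the embeddings \(\sigma\in\operatorname{Hom}(K,\mathbb C)\) according to the archimedean place \(v\) of \(k\) below them. Fix \(\sigma_v\) above \(v\); then the embeddings above \(v\) are \(\sigma_v\circ g\), \(g\in G\), and \(G\) acts simply transitively on them. This gives
\[
Y_{K,n}\cong\bigoplus_{v\in S_\infty(k)}\mathbb Z[G]
\]
as \(\mathbb Z[G]\)-modules, up to the twist by \((2\pi i)^{n-1}\). Complex conjugation \(c\) preserves each summand.
\begin{itemize}
\item If \(v\) is complex, \(c\) exchanges \(\sigma\) and \(c\circ\sigma\), which lie in distinct \(G\)-orbits of the set of embeddings above \(v\) (there are \(2|G|\) of them, forming two orbits swapped by \(c\)). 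The invariants are therefore free of rank one over \(\mathbb Z[G]\).
\item If \(v\) is real with generator \(\tau_v\) of \(D_v\), then \(c\circ\sigma_v\circ g=\sigma_v\circ\tau_v g\). Hence \(c\) acts on the summand \(\mathbb Z[G]\) as multiplication by \((-1)^{n-1}\tau_v\), and the invariants are \(\bigl(1+(-1)^{n-1}\tau_v\bigr)\mathbb Z[G]\). When \(D_v\) is trivial, \(c\) acts by the scalar \((-1)^{n-1}\), so the invariants are either all of \(\mathbb Z[G]\) or zero according to parity.
\end{itemize}
After tensoring with \(\mathbb Z_p\) for \(p\) odd, \(2\) is invertible, and these invariants become \(\varepsilon_v^{+}\mathbb Z_p[G]\) for \(n\) odd and \(\varepsilon_v^{-}\mathbb Z_p[G]\) for \(n\) even. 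In the case \(D_v=1\) this uses \(\varepsilon_v^{+}=1\) and \(\varepsilon_v^{-}=0\).

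This yields a \(\mathbb Z_p[G]\)-isomorphism
\[
\mathbb Z_pY_{K,n}^{+}\cong\mathbb Z_p[G]^{r_2(k)}\oplus\bigoplus_{v\in S_{\infty,\mathbb R}(k)}\varepsilon_v^{\pm}\mathbb Z_p[G],
\]
with sign \(+\) for \(n\) odd and \(-\) for \(n\) even. Multiplying by \(\mathfrak e^d_{K,n}\) gives
\[
\mathfrak e^d_{K,n}\mathbb Z_pY_{K,n}^{+}\cong(\mathfrak e^d_{K,n}\mathbb Z_p[G])^{r_2(k)}\oplus\bigoplus_{v}\mathfrak e^d_{K,n}\varepsilon_v^{\pm}\mathbb Z_p[G].
\]
The task is now to show this is free of rank \(d\) over \(R:=\mathfrak e^d_{K,n}\mathbb Z_p[G]\).

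Here lies the main obstacle. The summands \(\mathfrak e^d_{K,n}\varepsilon_v^{\pm}R\) are projective but not individually free. One needs to recombine the \(r_1(k)\) real summands into exactly \(q=d-r_2(k)\) free copies of \(R\).

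The plan is to use the decomposition of \(\mathfrak e^d_{K,n}\) from the preceding Proposition as a sum of orthogonal idempotents \(f_A\), indexed by subsets \(A\subseteq S_{\infty,\mathbb R}(k)\) with \(|A|=q\). Then \(R=\bigoplus_A f_A\mathbb Z_p[G]\), and each real summand splits as \(\bigoplus_A f_A\varepsilon_v^{\pm}\mathbb Z_p[G]\). By construction of \(f_A\), we have \(f_A\varepsilon_v^{\pm}=f_A\) if \(v\in A\) and \(0\) otherwise; this holds for both parities, since the Proposition's formula for each parity was built to match. Hence the \(f_A\)-component of the real part is \((f_A\mathbb Z_p[G])^{|A|}=(f_A\mathbb Z_p[G])^{q}\). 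This works because each \(f_A\) is an orthogonal idempotent in \(\mathbb Z_p[G]\) and all \(A\) have the same cardinality \(q\).

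Summing over \(A\), the real part is isomorphic to \(\bigoplus_A(f_A\mathbb Z_p[G])^q=R^q\). Combining with the complex part gives
\[
\mathfrak e^d_{K,n}\mathbb Z_pY_{K,n}^{+}\cong R^{r_2(k)+q}=R^d.
\]
Each isomorphism used is \(\mathbb Z_p[G]\)-linear, hence \(R\)-linear.

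If \(\mathfrak e^d_{K,n}=0\), then \(R=0\) and both sides vanish, which gives the final assertion. This covers the cases \(q<0\) and \(q>r_1(k)\) of the Proposition. In the case \(q<0\), \(R^d\) with \(d\) possibly negative would be ill-defined, but \(\mathfrak e^d_{K,n}=0\) there, so the zero statement is the one that applies.
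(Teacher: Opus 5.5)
Your proof is correct, but its final step differs from the paper's. Both arguments start from the same place-by-place description of $\mathbb Z_pY_{K,n}^{+}$; your $\varepsilon_v^{\pm}\mathbb Z_p[G]$ is the paper's induced module $\mathbb Z_p[G]\otimes_{\mathbb Z_p[D_v]}\mathbb Z_p(\xi_{v,n})$.

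\textbf{The paper's route.} From that description the paper keeps only the fact that the module is projective. It then:
\begin{itemize}
\item computes $\dim_{\mathbb C_p}e_\chi(\mathbb C_pY_{K,n}^{+})=r(\chi,n)$;
\item splits $\mathfrak e^d_{K,n}\mathbb Z_p[G]$ into a product of local rings;
\item uses that a finitely generated projective module over a local ring is free, with its rank read off from any character in the block.
\end{itemize}

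\textbf{Your route.} You use the orthogonal idempotent decomposition $\mathfrak e^d_{K,n}=\sum_{|A|=q}f_A$ from the preceding Proposition. The identity $f_A\varepsilon_v^{\pm}=f_A$ or $0$, according to whether $v\in A$, lets you regroup the real summands explicitly as $\bigoplus_A (f_A\mathbb Z_p[G])^{q}=R^q$. I checked this regrouping, including the case $q=0$ and the case where some $\varepsilon_v^{-}=0$, and it is sound.

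\textbf{What each buys.} Your version is more elementary and gives an explicit isomorphism, with no appeal to semilocal structure theory or a rank comparison. Its cost is that it depends on the explicit shape of $\mathfrak e^d_{K,n}$. The paper's argument needs only projectivity plus the generic rank identity. That ties the statement directly to the order of vanishing, and it would still work for lattices with no explicit idempotent formula.

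\textbf{Two cosmetic slips.} First, the opening claim $Y_{K,n}\cong\bigoplus_{v}\mathbb Z[G]$ is wrong for complex $v$: the $2|G|$ embeddings above such a $v$ give $\mathbb Z[G]^2$, which is what your bullet then correctly uses. Second, your closing remark about $d$ being ``possibly negative'' is confused. We have $d\geq 0$ by hypothesis, $q<0$ only means $d<r_2(k)$, and $R^d=0$ simply because $R=0$.
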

\begin{proof}
By the previous proposition, since $p$ is odd we have 
\[
\mathfrak e_{K,n}^d\in\mathbb Z_p[G].
\]
We first recall the structure of the archimedean module
\(\mathbb{Z}_pY_{K,n}^{+}\). For each archimedean place
\(v\) of \(k\), let \(Y_{K,n,v}^{+}\) denote the direct summand of
\(Y_{K,n}^{+}\) corresponding to the embeddings of \(K\) inducing the place
\(v\). Thus
\[
Y_{K,n}^{+}
=
\bigoplus_{v\in S_\infty(k)}Y_{K,n,v}^{+}.
\]
We distinguish the following cases. If \(v\) is a complex place of \(k\), then
\[
Y_{K,n,v}^{+}\otimes_{\mathbb Z}\mathbb Z_p
\cong
\mathbb Z_p[G].
\]
Indeed, the embeddings above a complex place occur in conjugate pairs, and the
``plus'' condition leaves one free copy of \(\mathbb Z_p[G]\).
Now let \(v\) be a real place of \(k\). If \(v\) splits completely in \(K/k\),
then \(D_v=\{1\}\). In this case complex conjugation acts on the corresponding
summand only through the sign
\(
\overline{(2\pi i)^{n-1}}=(-1)^{n-1}(2\pi i)^{n-1}.
\)
Hence
\[
Y_{K,n,v}^{+}\otimes_{\mathbb Z}\mathbb Z_p
\cong
\begin{cases}
\mathbb Z_p[G], & \text{if \(n\) is odd},\\
0, & \text{if \(n\) is even}.
\end{cases}
\]
Finally, suppose that \(v\) is real and does not split completely in \(K/k\).
Then \(D_v\) has order \(2\). Let \(\tau_v\) denote its non-trivial element, and
let \(\xi_{v,n}\) be the character of \(D_v\) defined by
\[
\xi_{v,n}(\tau_v)=(-1)^{n-1}.
\]
Then there is an isomorphism of \(\mathbb Z_p[G]\)-modules
\[
Y_{K,n,v}^{+}\otimes_{\mathbb Z}\mathbb Z_p
\cong
\mathbb Z_p[G]\otimes_{\mathbb Z_p[D_v]}\mathbb Z_p(\xi_{v,n}),
\]
where \(\mathbb Z_p(\xi_{v,n})\) denotes the free rank-one
\(\mathbb Z_p\)-module on which \(D_v\) acts through \(\xi_{v,n}\).
Since \(p\) is odd, the order of \(D_v\) is invertible in \(\mathbb Z_p\). In
particular, when \(D_v\) has order \(2\), the module
\(\mathbb Z_p(\xi_{v,n})\) is projective over \(\mathbb Z_p[D_v]\): explicitly,
it is cut out by the idempotent
\[
\frac{1+\xi_{v,n}(\tau_v)\tau_v}{2}
\in
\mathbb Z_p[D_v].
\]
Hence
\[
\mathbb Z_p[G]\otimes_{\mathbb Z_p[D_v]}\mathbb Z_p(\xi_{v,n})
\]
is a direct summand of \(\mathbb Z_p[G]\), and is therefore projective over
\(\mathbb Z_p[G]\).
It follows that every summand
\(Y_{K,n,v}^{+}\otimes_{\mathbb Z}\mathbb Z_p\) is either zero or projective as
a \(\mathbb Z_p[G]\)-module. Therefore
\(
\mathbb{Z}_pY_{K,n}^{+}
\)
is a finitely generated projective \(\mathbb Z_p[G]\)-module. Since
\(\mathfrak e_{K,n}^d\in\mathbb Z_p[G]\), the module
\[
\mathfrak e_{K,n}^d
\bigl(\mathbb{Z}_pY_{K,n}^{+}\bigr)
\]
is a finitely generated projective module over the ring
\(
\mathfrak e_{K,n}^d\mathbb Z_p[G].
\)
We now compute its rank on each local factor of
\(\mathfrak e_{K,n}^d\mathbb Z_p[G]\). First note that, after extending scalars
to \(\mathbb C_p\), the preceding description gives, for every
\(\chi\in\widehat G\),
\[
\dim_{\mathbb C_p}
e_\chi\bigl(\mathbb C_pY_{K,n}^{+}\bigr)
=
r(\chi,n).
\]
Indeed, each complex place of \(k\) contributes one dimension to every
character component. If \(v\) is real, then its contribution is one precisely
when \(\chi(D_v)=1\) and \(n\) is odd, and precisely when \(\chi(D_v)\neq 1\)
and \(n\) is even. Thus the total dimension is
\[
r_2(k)+a(\chi)^+
\qquad \text{
if \(n\) is odd, and}
\qquad 
r_2(k)+a(\chi)^-
\qquad \text{if \(n\) is even}\] which is exactly \(r(\chi,n)\) by \eqref{eq:35}.
Consequently, if \(e_\chi\) occurs in \(\mathfrak e_{K,n}^d\), then
\[
\dim_{\mathbb C_p}
e_\chi\Bigl(
\mathbb C_p\otimes_{\mathbb Z_p}
\mathfrak e_{K,n}^d
\bigl(\mathbb{Z}_pY_{K,n}^{+}\bigr)
\Bigr)
=
d.
\]
On the other hand, for every such character \(\chi\), one has
\[
e_\chi
\bigl(
\mathbb C_p\otimes_{\mathbb Z_p}
\mathfrak e_{K,n}^d\mathbb Z_p[G]
\bigr)
\cong
\mathbb C_p.
\]
Therefore
\[
\mathbb C_p\otimes_{\mathbb Z_p}
\mathfrak e_{K,n}^d
\bigl(\mathbb{Z}_pY_{K,n}^{+}\bigr)
\cong
\left(
\mathbb C_p\otimes_{\mathbb Z_p}
\mathfrak e_{K,n}^d\mathbb Z_p[G]
\right)^d
\]
as \(\mathbb C_p[G]\)-modules.
It remains to descend this equality of generic ranks to an integral statement.
The ring
\[
\mathfrak e_{K,n}^d\mathbb Z_p[G]
\]
is a finite commutative semilocal \(\mathbb Z_p\)-algebra. Hence it decomposes
as a finite product of local rings
\[
\mathfrak e_{K,n}^d\mathbb Z_p[G]
=
\prod_i A_i.
\]
The corresponding components of
\[
\mathfrak e_{K,n}^d
\bigl(\mathbb{Z}_pY_{K,n}^{+}\bigr)
\]
are finitely generated projective modules over the local rings \(A_i\). Since
every finitely generated projective module over a local ring is free, each
component is free over the corresponding \(A_i\).
The computation after scalar extension to \(\mathbb C_p\) shows that the rank
of each of these free \(A_i\)-modules is equal to \(d\). Hence each component is
isomorphic to \(A_i^d\). Taking the product over all local factors gives
\[
\mathfrak e_{K,n}^d
\bigl(\mathbb{Z}_pY_{K,n}^{+}\bigr)
\cong
\bigl(\mathfrak e_{K,n}^d\mathbb Z_p[G]\bigr)^d,
\]
as claimed.\qed
\end{proof}
\subsection{The Equivariant Tamagawa Number Conjecture for $(h^0(\operatorname{Spec}(K))(1-n), \mathbb Z_p[G])$}\label{ETNCnegative-weight}
As before, suppose that $n\geq 2$.
\textit{In what follows we will always assume that $p$ is an odd prime number.} 
Therefore, we adopt  the following Quillen-Lichtenbaum identification in the rest of this work
\[
\mathbb Z_pK_{2n-i}(O_{K,S})
=
H^i_{\mathrm{\acute{e}t}}(\operatorname{Spec}(O_{K,S}),\mathbb Z_p(n))\qquad \text{for}\qquad i=1,2.
\]
\noindent We recall below the statement of the Equivariant Tamagawa Number Conjecture (abbreviated as the ETNC) for the pair $(h^0(\operatorname{Spec}(K))(1-n), \mathbb Z_p[G])$ (see e.g., \cite{Flach} for a more general exposition on this conjecture).\\
Consider the  trivialization defined as the following composition of isomorphisms
\begin{align}\label{Trivialization}
\nabla_{K,S}:\;\mathbb{C}_p\operatorname{det}_{\mathbb{Z}_p[G]}(\mathcal{C}^\bullet_{K,S})&\simeq \operatorname{det}_{\mathbb{C}_p[G]}(\mathbb{C}_pK_{2n-1}(O_{K}))\otimes_{\mathbb{C}_p[G]}\operatorname{det}^{-1}_{\mathbb{C}_p[G]}(\mathbb{C}_pY_{K,n}^+) \nonumber\\
    &\stackrel{\simeq}{\longrightarrow}\operatorname{det}_{\mathbb{C}_p[G]}(\mathbb{C}_pY_{K,n}^+)\otimes_{\mathbb{C}_p[G]}\operatorname{det}^{-1}_{\mathbb{C}_p[G]}(\mathbb{C}_pY_{K,n}^+) \nonumber\\
    &\stackrel{\simeq}{\longrightarrow}\mathbb{C}_p[G],
\end{align}
where the first map is the standard \textit{passage to cohomology} isomorphism (obtained from the description of the cohomology of $\mathcal{C}_{K,S}^\bullet$ in Proposition \ref{PROP-2-4}), the second is induced by the Beilinson regulator map $\widetilde r^B_{K,n}$, and the last one is the standard evaluation map.
The ETNC for the pair $(h^0(\operatorname{Spec}(K))(1-n), \mathbb Z_p[G])$ is formulated as follows (e.g., see \cite[\S 3]{BunsG1})
\begin{conjecture}{\textbf{(The ETNC for $(h^0(\operatorname{Spec}(K))(1-n), \mathbb Z_p[G])$)}.}\label{ConjectETNC} One has
\[\nabla_{K,S}\bigl(\operatorname{det}_{\mathbb Z_p[G]}(\mathcal C^\bullet_{K,S})\bigr)=\theta^{*}_{K/k,S}(1-n)\mathbb{Z}_p[G].
\]
\end{conjecture}
In the rest of the paper, we work under the standing hypothesis that the ETNC
holds for the pair
\[
\bigl(h^0(\operatorname{Spec}(K))(1-n),\mathbb Z_p[G]\bigr).
\]
This hypothesis is known in many cases relevant to the present work. We recall
some of them:
\begin{itemize}
    \item \(K\) is an abelian number field \cite{BunsG1}.
    
    \item \(K/k\) is a finite abelian extension of number fields with Galois
    group \(G\), where \(k\) is imaginary quadratic, and \(p\nmid 6|G|\) is a
    prime number which splits in \(k\) \cite{Leung}.
    
    \item If \(K/k\) is an abelian CM extension of number fields with Galois group
    \(G\) and  \(p\) is odd.
    Then
    for each even \(n\geq 2\), respectively odd \(n\geq 3\), the plus,
    respectively minus, part of the ETNC for
    \(
    \bigl(h^0(\operatorname{Spec}(K))(1-n),\mathbb Z_p[G]\bigr)
    \)
    holds 
   \cite{JohnstonNickel}.
\end{itemize}
\subsection{Twisted Stickelberger-Stark elements in algebraic $K$-theory}\label{subsect4.2}
Recall that $p$ is an odd prime number and fix an integer $n\geq 2$.
Recall also that we write $\theta_{K/k,S}^*(1-n)\in\mathbb{C}_p[G]^\times$ for the leading term 
of the function $\theta_{K/k,S}(s)$ at  $s=1-n$. In what follows, we fix $d=1$ and are interested in the first derivative of  $\theta_{K/k,S}(s)$ at  $s=1-n$:
\[
\theta_{K/k,S}'(1-n)
:=
\mathfrak e^1_{K,n}\theta_{K/k,S}^*(1-n)
=
\sum_{\substack{\chi\in\widehat G\\ r(\chi,n)=1}}
L'_{K/k,S}(1-n,\chi^{-1})e_\chi
\in
(\mathfrak e^1_{K,n}\mathbb C_p[G])^\times.
\]
where $\mathfrak e^1_{K,n}\in\mathbb{Z}_p[G]$ is the idempotent defined in \S \ref{The rank d component} with $d=1$. The Beilinson regulator map \(r^B_{K,n}\) induces an isomorphism
of \(\mathbb C_p[G]\)-modules
\[
\widetilde r^B_{K,n}:
\mathbb C_p K_{2n-1}(O_K)
\xrightarrow{\sim}
\mathbb C_p Y_{K,n}^{+}.
\]
By Lemma \ref{lem:Y-free-over-ed}, we have an identification
\[
\mathfrak e^1_{K,n}\mathbb C_p Y_{K,n}^{+}\cong \mathfrak e^1_{K,n}\mathbb C_p[G].
\]
Therefore, the induced map on the $\mathfrak e^1_{K,n}$-component defines an isomorphism
\[
\widetilde r^B_{K,n}:
\mathfrak e^1_{K,n}\mathbb C_p K_{2n-1}(O_K)
\xrightarrow{\sim}
\mathfrak e^1_{K,n}\mathbb C_p [G].
\]
Inspired by the abelian Stark conjecture for number fields \cite{Stark}, we define the \textit{Twisted Stickelberger-Stark element} for $(K/k, S, n)$ as the unique element $\Theta'_{K/k,S}(1-n)\in\mathbb{C}_pK_{2n-1}(O_K)$ defined by
\[
\Theta'_{K/k,S}(1-n):=(\widetilde r^B_{K,n})^{-1}\big( \theta_{K/k,S}'(1-n)\big).
\]
The key result of this subsection is the following theorem.
\begin{theorem}
\label{Thm:annihilated-integrality-of-Theta-prime}
We have
\[
\operatorname{Ann}_{\mathbb Z_p[G]}
\bigl(\mathbb Z_pK_{2n-1}(O_K)_{\mathrm{tors}}\bigr)
\cdot
\Theta'_{K/k,S}(1-n)
\subseteq
\mathbb Z_pK_{2n-1}(O_K)_{\mathrm{tf}}.
\]
\end{theorem}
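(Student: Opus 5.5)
The strategy is to read the integrality statement off the ETNC (Conjecture~\ref{ConjectETNC}) after projecting to the rank-one component $\mathfrak e:=\mathfrak e^1_{K,n}$, using Lemma~\ref{lem:Y-free-over-ed} and the explicit cohomology of $\mathcal C^\bullet_{K,S}$ from Proposition~\ref{PROP-2-4}. Put $R:=\mathfrak e\mathbb Z_p[G]$. Since $\mathfrak e\in\mathbb Z_p[G]$ is an idempotent, $\mathfrak e\,\mathcal C^\bullet_{K,S}$ is a perfect complex of $R$-modules with $\mathfrak e\det_{\mathbb Z_p[G]}(\mathcal C^\bullet_{K,S})=\det_R(\mathfrak e\,\mathcal C^\bullet_{K,S})$. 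The trivialization $\nabla_{K,S}$ restricts to the $\mathfrak e$-part, and the ETNC gives
\[\nabla_{K,S}\bigl(\det_R(\mathfrak e\,\mathcal C^\bullet_{K,S})\bigr)=\theta'_{K/k,S}(1-n)R .\]

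\textbf{Step 1: representing the complex.} By Proposition~\ref{PROP-2-4} the complex is acyclic outside degrees $0,1$. I would represent $\mathfrak e\,\mathcal C^\bullet_{K,S}$ as $P\xrightarrow{\delta}P$ with $P$ free over $R$, which is possible because the Euler characteristic is zero after $\otimes\mathbb C_p$. Then $\ker\delta=\mathfrak e\mathbb Z_pK_{2n-1}(O_K)$ and $\operatorname{coker}\delta=\mathfrak e H^1(\mathcal C^\bullet_{K,S})$. The latter surjects onto $\mathfrak e\mathbb Z_pY^+_{K,n}\cong R$ by Lemma~\ref{lem:Y-free-over-ed}, with finite kernel $\mathfrak e\mathbb Z_pK_{2n-2}(O_{K,S})$.

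\textbf{Step 2: the lattice generated.} The key algebraic input is the standard ``rank one'' determinant computation in the style of Burns' Rubin-lattice arguments. Choose the $R$-basis $y$ of $\mathfrak e\mathbb Z_pY^+_{K,n}$ to be the one used in the identification of Lemma~\ref{lem:Y-free-over-ed}. The image of $\det_R(P\to P)$ under passage to cohomology followed by pairing against $y^*$ is then an $R$-submodule of $\mathbb C_p\mathfrak e K_{2n-1}(O_K)$. I would show it is contained in the set of elements $x$ of $\mathbb Q_p\mathfrak e K_{2n-1}(O_K)$ such that $\operatorname{Fitt}^0_R(H^2\text{-part})\cdot x\subseteq$ the image of $\bigcap^1 \ker\delta$. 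More concretely, I would take a lift $\tilde y\in P$ of $y$, extend it to a basis of $P$, and note that $\det\delta$ restricted to the remaining coordinates is a $\operatorname{Hom}_R(P,R)$-valued expression. Its evaluation produces elements of $\ker\delta\subseteq P$, up to the composite with the regulator. Unwinding $\nabla_{K,S}$ then shows that $\Theta'_{K/k,S}(1-n)=(\widetilde r^B)^{-1}(\theta')$ is the image of a generator. Hence $\Theta'$ lies in the image of $\{\,\sum\pm \det(\text{minor})\cdot b\,\}$, which is integral in $P$ and lands in $\ker\delta\otimes\mathbb Q_p$. I therefore expect to obtain $\Theta'\in \mathbb Q_p\otimes\ker\delta$ and, more precisely, $\Theta'\in(\ker\delta)\otimes\mathbb Q_p\cap P=$ the saturation of $\ker\delta$ in $P$.

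\textbf{Step 3: the torsion.} The main obstacle is passing from ``integral in $P$'' to ``in $\mathbb Z_pK_{2n-1}(O_K)_{\rm tf}$ after multiplication by the torsion annihilator.'' Since $\ker\delta$ is saturated in $P$ (as $P/\ker\delta\hookrightarrow P$ is torsion-free), the element $\Theta'$ actually lies in $\ker\delta$ modulo the ambiguity coming from the identification of $\mathbb C_pK_{2n-1}$ with $\mathbb C_p\otimes K_{2n-1,\rm tf}$. The subtle point is that the passage-to-cohomology map is only canonical up to the torsion of $H^0$. The relevant generator is therefore naturally an element of $\mathbb Q_p\otimes H^0$ whose integrality is measured by the lattice $H^0_{\rm tf}$, and it is only defined up to $\operatorname{Ext}$-classes involving $H^0_{\rm tors}$. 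To handle this I would dualize, working with $\operatorname{Hom}_R(P,R)$ and the exact sequence $0\to H^0_{\rm tors}\to H^0\to H^0_{\rm tf}\to0$. I would show that for $a\in\operatorname{Ann}(H^0_{\rm tors})$, multiplication by $a$ factors through $H^0_{\rm tf}\to H^0$ on the relevant $\operatorname{Ext}^1$ term. This yields $a\Theta'\in\mathfrak e\mathbb Z_pK_{2n-1}(O_K)_{\rm tf}$. Finally, since $\Theta'=\mathfrak e\Theta'$, the result lies in $\mathbb Z_pK_{2n-1}(O_K)_{\rm tf}$, as claimed.
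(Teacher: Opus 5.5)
Your Step 1 is where the argument breaks, and it breaks at the point that carries the content of the theorem. Suppose $\mathfrak e\,\mathcal C^\bullet_{K,S}$ were represented by $P\xrightarrow{\delta}P$ with $P$ free and placed in degrees $0,1$. Then $\mathfrak e\,\mathbb Z_pK_{2n-1}(O_K)=\ker\delta\subseteq P$ would be $\mathbb Z_p$-torsion-free. But $\mathbb Z_pK_{2n-1}(O_K)_{\mathrm{tors}}\cong H^0(K,\mathbb Q_p/\mathbb Z_p(n))$, and its $\mathfrak e$-part is non-zero in general. For instance, when $k$ is imaginary quadratic one has $\mathfrak e^1_{K,n}=1$, and this torsion is often non-trivial. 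Vanishing of the Euler characteristic only controls ranks. The standard representative is $C^0\to P$ with $C^0$ merely of finite projective dimension, and $C^0$ is projective (by Auslander--Buchsbaum) precisely when $H^0$ is torsion-free.

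In fact your Steps 1--2 would prove too much: they would give $\Theta'_{K/k,S}(1-n)\in\mathfrak e\,\mathbb Z_pK_{2n-1}(O_K)_{\mathrm{tf}}$ with no annihilator at all. Already for $G$ trivial and $K=k$ imaginary quadratic, the ETNC gives $\Theta'_{K/k,S}(1-n)=u\cdot\bigl(\#\mathbb Z_pK_{2n-2}(O_{K,S})/\#\mathbb Z_pK_{2n-1}(O_K)_{\mathrm{tors}}\bigr)\beta$, with $u\in\mathbb Z_p^\times$ and $\beta$ a generator of $\mathbb Z_pK_{2n-1}(O_K)_{\mathrm{tf}}$. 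This is not integral when the torsion is the larger of the two; e.g.\ for $k=\mathbb Q(i)$, $p=5$, $n=4$ the torsion is $\mathbb Z/5$ while $\mathbb Z_5K_6(O_{K,S})=0$.

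Step 3 does not repair this. The passage-to-cohomology map is a canonical isomorphism after extending scalars, so there is no ``ambiguity up to torsion'' to exploit. The observation that multiplication by $a\in\operatorname{Ann}(H^0_{\mathrm{tors}})$ factors through $H^0_{\mathrm{tf}}$ is never connected to any statement about $a\,\Theta'_{K/k,S}(1-n)$.

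The missing idea is the $\Sigma$-modification. Take a non-empty finite set $\Sigma$ of places of $k$ disjoint from $S$. The paper replaces $\mathcal C^\bullet_{K,S}$ by $\mathcal C^\bullet_{K,S,\Sigma}$, whose $H^0$ is $H^1_\Sigma(\operatorname{Spec}(O_{K,S}),\mathbb Z_p(n))$. This group is torsion-free because $H^0(K,\mathbb Q_p/\mathbb Z_p(n))$ injects into $H^1(\kappa(w),\mathbb Z_p(n))$ for any $w\in\Sigma_K$. For this complex your Steps 1--2 do go through: a free quadratic representative via Auslander--Buchsbaum, rank reduction, and saturation of the kernel. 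Since $\det\mathcal C^\bullet_{K,S,\Sigma}$ is generated by $\Delta_\Sigma\mathfrak z$, where $\mathfrak z$ is the ETNC generator, they give $\Delta_\Sigma\Theta'_{K/k,S}(1-n)\in\mathfrak e\,H^1_\Sigma(\operatorname{Spec}(O_{K,S}),\mathbb Z_p(n))\subseteq\mathbb Z_pK_{2n-1}(O_K)_{\mathrm{tf}}$.

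The annihilator then enters through Greither--Popescu, Lemma 6.9(2): the elements $\Delta_\Sigma=\prod_{v\in\Sigma}(1-\sigma_v^{-1}\operatorname{N}v^n)$ generate $\operatorname{Ann}_{\mathbb Z_p[G]}(\mathbb Z_pK_{2n-1}(O_K)_{\mathrm{tors}})$. Without this modification and this generation statement, or a genuine substitute such as a Fitting-ideal computation with a non-projective degree-$0$ term, your argument has no mechanism that produces the annihilator factor.
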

Before proving Theorem \ref{Thm:annihilated-integrality-of-Theta-prime}, we need to record some preparatory results. We will do this in the following paragraphs.
\subsubsection{$\Sigma$-cutout}

Let $\Sigma$ be a finite set of finite places of $k$ disjoint from $S$, and
let $\Sigma_K$ denote the set of places of $K$ above $\Sigma$. We suppose again that \(n\geq 2\) and that \(p\) is an odd prime number, and
we let \(D(\mathbb Z_p[G])\) denote the derived category of complexes of
\(\mathbb Z_p[G]\)-modules. Recall that
the
complex of étale $\Sigma$-cohomology is defined in $D(\mathbb Z_p[G])$ by
\[
R\Gamma_{\Sigma}(\operatorname{Spec}(O_{K,S}),\mathbb Z_p(n))
:=
\operatorname{Cone}\left(
R\Gamma_{\mathrm{\acute{e}t}}(\operatorname{Spec}(O_{K,S}),\mathbb Z_p(n))
\longrightarrow
\bigoplus_{w\in\Sigma_K}R\Gamma(\kappa(w),\mathbb Z_p(n))
\right)[-1],
\]
where $\kappa(w)$ is the residue field at $w$. We set
\[
H^i_{\Sigma}(\operatorname{Spec}(O_{K,S}),\mathbb Z_p(n))
:=
H^i\left(
R\Gamma_{\Sigma}(\operatorname{Spec}(O_{K,S}),\mathbb Z_p(n))
\right).
\]
The associated long exact sequence gives
\begin{align*}
0\to&
H^1_{\Sigma}(\operatorname{Spec}(O_{K,S}),\mathbb Z_p(n))
\to
H^1_{\mathrm{\acute{e}t}}(\operatorname{Spec}(O_{K,S}),\mathbb Z_p(n))
\to
\bigoplus_{w\in \Sigma_K}H^1(\kappa(w),\mathbb Z_p(n))
\to
H^2_{\Sigma}(\operatorname{Spec}(O_{K,S}),\mathbb Z_p(n))
\\
\to&
H^2_{\mathrm{\acute{e}t}}(\operatorname{Spec}(O_{K,S}),\mathbb Z_p(n))
\to 0.
\end{align*}
For each $v\in\Sigma$, observe that
\[
\bigoplus_{w\mid v}H^1(\kappa(w),\mathbb Z_p(n))
\simeq
\mathbb Z_p[G]/
\bigl(1-\sigma_v^{-1}\operatorname{N}v^n\bigr)\mathbb Z_p[G],
\]
and
\(
\bigoplus_{w\mid v}H^i(\kappa(w),\mathbb Z_p(n))=0
\)
for $i\neq 1$. Hence the complex
\(
\bigoplus_{w\in\Sigma_K}R\Gamma(\kappa(w),\mathbb Z_p(n))
\)
is isomorphic in $D(\mathbb Z_p[G])$ to the complex
\[
\left[
\bigoplus_{v\in\Sigma}\mathbb Z_p[G]
\stackrel{(1-\sigma_v^{-1}\operatorname{N}v^n)_{v\in\Sigma}}{\longrightarrow}
\bigoplus_{v\in\Sigma}\mathbb Z_p[G]
\right],
\]
where the two terms are placed in degrees $0$ and $1$. In particular,
\(
\bigoplus_{w\in\Sigma_K}R\Gamma(\kappa(w),\mathbb Z_p(n))
\)
is a perfect complex of $\mathbb Z_p[G]$-modules, acyclic outside degree $1$.
\vskip 5pt
\noindent We now construct the $\Sigma$-cutout complex. 
Consider again the distinguished triangle \eqref{étale_to_compact-support}
\[
R\Gamma_{\mathrm{\acute{e}t}}(\operatorname{Spec}(O_{K,S}),\mathbb Z_p(n))[1]
\to
R\Gamma_c(\operatorname{Spec}(O_{K,S}),\mathbb Z_p(1-n))^*[-2]
\to
\left(\mathbb{Z}_pY_{K,n}^{+}\right)[-1]
\to.
\]
By the proof of Lemma \ref{lem:Y-free-over-ed},
\(\mathbb Z_pY_{K,n}^{+}\) is a projective \(\mathbb Z_p[G]\)-module. Since
\(
\bigoplus_{w\in\Sigma_K}R\Gamma(\kappa(w),\mathbb Z_p(n))[1]
\)
is acyclic outside degree \(0\), it follows that
\[
\operatorname{Hom}_{D(\mathbb Z_p[G])}
\left(
\left(\mathbb Z_pY_{K,n}^{+}\right)[-i],
\bigoplus_{w\in\Sigma_K}R\Gamma(\kappa(w),\mathbb Z_p(n))[1]
\right)
=0
\]
for all \(i\geq 1\). Indeed, since the target is quasi-isomorphic to
\[
\bigoplus_{w\in\Sigma_K}H^1(\kappa(w),\mathbb Z_p(n))
\]
placed in degree \(0\), the above group is equal to
\[
\operatorname{Ext}^{i}_{\mathbb Z_p[G]}
\left(
\mathbb{Z}_pY_{K,n}^{+},
\bigoplus_{w\in\Sigma_K}H^1(\kappa(w),\mathbb Z_p(n))
\right),
\]
which vanishes by projectivity.
Applying
\(
\operatorname{Hom}_{D(\mathbb Z_p[G])}
\left(
-,
\bigoplus_{w\in\Sigma_K}R\Gamma(\kappa(w),\mathbb Z_p(n))[1]
\right)
\)
to the preceding distinguished triangle therefore shows that the localization
morphism 
\[
R\Gamma_{\mathrm{\acute{e}t}}(\operatorname{Spec}(O_{K,S}),\mathbb Z_p(n))[1]
\longrightarrow
\bigoplus_{w\in\Sigma_K}R\Gamma(\kappa(w),\mathbb Z_p(n))[1]
\]
admits a unique lift
\[
f\colon
R\Gamma_c(\operatorname{Spec}(O_{K,S}),\mathbb Z_p(1-n))^*[-2]
\longrightarrow
\bigoplus_{w\in\Sigma_K}R\Gamma(\kappa(w),\mathbb Z_p(n))[1].
\]
We define
\[
\mathcal C_{K,S,\Sigma}^{\bullet}
:=
\operatorname{Cone}(f)[-1].
\]
Verdier's $3\times 3$ lemma gives a commutative diagram with
distinguished rows and columns
\[
\begin{tikzcd}
R\Gamma_{\Sigma}(\operatorname{Spec}(O_{K,S}), \mathbb Z_p(n))[1]
\arrow[r]
\arrow[d]
&
\mathcal C_{K,S,\Sigma}^{\bullet}
\arrow[r]
\arrow[d]
&
\left(\mathbb{Z}_pY_{K,n}^{+}\right)[-1]
\arrow[d, "="]
\\
R\Gamma_{\mathrm{\acute{e}t}}(\operatorname{Spec}(O_{K,S}),\mathbb Z_p(n))[1]
\arrow[r]
\arrow[d]
&
R\Gamma_c(\operatorname{Spec}(O_{K,S}),\mathbb Z_p(1-n))^*[-2]
\arrow[r]
\arrow[d, "f"]
&
\left(\mathbb{Z}_pY_{K,n}^{+}\right)[-1]
\arrow[d, "="]
\\
\displaystyle\bigoplus_{w\in\Sigma_K}R\Gamma(\kappa(w),\mathbb Z_p(n))[1]
\arrow[r, "="]
&
\displaystyle\bigoplus_{w\in\Sigma_K}R\Gamma(\kappa(w),\mathbb Z_p(n))[1]
\arrow[r]
&
0.
\end{tikzcd}
\]
Since
\(
R\Gamma_c(\operatorname{Spec}(O_{K,S}),\mathbb Z_p(1-n))^*[-2]
\)
and
\(
\bigoplus_{w\in\Sigma_K}R\Gamma(\kappa(w),\mathbb Z_p(n))[1]
\)
are perfect complexes of $\mathbb Z_p[G]$-modules, the second column shows that
$\mathcal C_{K,S,\Sigma}^{\bullet}$ is perfect. On the other hand, the first
row shows that $\mathcal C_{K,S,\Sigma}^{\bullet}$ is acyclic outside degrees
$0$ and $1$. More precisely, it gives a canonical identification
\[
H^0(\mathcal C_{K,S,\Sigma}^{\bullet})
\simeq
H^1_{\Sigma}(\operatorname{Spec}(O_{K,S}),\mathbb Z_p(n))
\]
and an exact sequence
\[
0\to
H^2_{\Sigma}(\operatorname{Spec}(O_{K,S}),\mathbb Z_p(n))
\to
H^1(\mathcal C_{K,S,\Sigma}^{\bullet})
\to
\mathbb{Z}_pY_{K,n}^{+}
\to 0.
\]
We end this subsection by recording the following torsion-freeness property that will be used below.

\begin{lemma}
\label{lem:Sigma-cohomology-torsion-free}
Assume that $\Sigma$ is non-empty. Then
\(
H^1_{\Sigma}(\operatorname{Spec}(O_{K,S}),\mathbb Z_p(n))
\)
is $\mathbb Z_p$-torsion-free.
\end{lemma}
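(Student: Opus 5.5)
The strategy is to pin down the torsion of \(H^1_{\mathrm{\acute{e}t}}(\operatorname{Spec}(O_{K,S}),\mathbb Z_p(n))\), which is \(H^0(K,\mathbb Q_p/\mathbb Z_p(n))\), and then show that any torsion element of \(H^1_\Sigma\) would have to inject into the residue-field cohomology at a place of \(\Sigma\).

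From the long exact sequence recorded above, \(H^1_{\Sigma}\) is the kernel of the localization map
\[
H^1_{\mathrm{\acute{e}t}}(\operatorname{Spec}(O_{K,S}),\mathbb Z_p(n))\longrightarrow\bigoplus_{w\in\Sigma_K}H^1(\kappa(w),\mathbb Z_p(n)).
\]
So it suffices to show that this map is injective on the \(\mathbb Z_p\)-torsion subgroup of the source. In fact it is enough to find a single \(w\in\Sigma_K\) whose component is injective on torsion.

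\emph{Identifying the torsion.} Apply the long exact sequence of \(0\to\mathbb Z_p(n)\xrightarrow{p^m}\mathbb Z_p(n)\to\mathbb Z/p^m(n)\to0\) and pass to the limit. This identifies the torsion of \(H^1_{\mathrm{\acute{e}t}}(O_{K,S},\mathbb Z_p(n))\) with
\[
H^0(O_{K,S},\mathbb Q_p/\mathbb Z_p(n))=H^0(K,\mathbb Q_p/\mathbb Z_p(n)),
\]
a finite cyclic group, say of order \(p^a\). The reason is that \(H^0(O_{K,S},\mathbb Z_p(n))=0\) since \(n\neq0\), and the torsion of \(H^1\) is the image of \(\varinjlim_m H^0(\mathbb Z/p^m(n))\) under the coboundary. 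If \(a=0\) there is nothing to prove.

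\emph{The local computation.} For a finite field \(\kappa=\kappa(w)\) of characteristic \(\ell\neq p\) (here \(w\notin S_K\), so \(w\nmid p\)), the same argument gives
\[
H^1(\kappa,\mathbb Z_p(n))=H^1(\kappa,\mathbb Z_p(n))_{\rm tors}\cong H^0(\kappa,\mathbb Q_p/\mathbb Z_p(n)).
\]
The localization map on torsion is induced by the restriction \(H^0(K,\mathbb Q_p/\mathbb Z_p(n))\to H^0(\kappa(w),\mathbb Q_p/\mathbb Z_p(n))\), compatibly with the coboundaries. This uses functoriality of the Bockstein sequences along \(\operatorname{Spec}\kappa(w)\to\operatorname{Spec}O_{K,S}\), which makes sense because \(w\) is a place of \(O_{K,S}\).

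This restriction is injective: reduction modulo \(w\) is injective on \(p\)-power roots of unity for \(w\nmid p\), and the Galois action is compatible. So any torsion class with trivial image at \(w\) is zero. Since \(\Sigma\neq\varnothing\), such a \(w\) exists, and this yields the lemma.

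\emph{Main obstacle.} The main care needed is the compatibility of the Bockstein identification of torsion with the localization map to \(\kappa(w)\), i.e.\ the commutative diagram of the two short exact sequences of coefficients. I expect this to be routine naturality. Note also that, unlike for \(\mathbb G_m\) (where a condition such as one on the residue characteristics of \(\Sigma\) is needed), here the twist \(n\geq2\) with \(w\nmid p\) makes the injectivity automatic.
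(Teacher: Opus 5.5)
Your proposal is correct and follows essentially the same route as the paper. Both arguments describe \(H^1_\Sigma\) as the kernel of localization, identify the torsion of \(H^1_{\mathrm{\acute{e}t}}(\operatorname{Spec}(O_{K,S}),\mathbb Z_p(n))\) with \(H^0(K,\mathbb Q_p/\mathbb Z_p(n))\) via a Bockstein coboundary, use \(w\nmid p\) to embed \(H^0(\kappa(w),\mathbb Q_p/\mathbb Z_p(n))\) isomorphically onto \(H^1(\kappa(w),\mathbb Z_p(n))\), and conclude by naturality of coboundaries and injectivity of restriction on \(H^0\); the only difference is that you use the finite-level sequences \(0\to\mathbb Z_p(n)\xrightarrow{p^m}\mathbb Z_p(n)\to\mathbb Z/p^m(n)\to 0\) and pass to the limit, whereas the paper uses \(0\to\mathbb Z_p(n)\to\mathbb Q_p(n)\to\mathbb Q_p/\mathbb Z_p(n)\to 0\) directly.
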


\begin{proof}
By the long exact sequence defining the $\Sigma$-cohomology complex, one has
\[
H^1_{\Sigma}(\operatorname{Spec}(O_{K,S}),\mathbb Z_p(n))
=
\ker\left(
H^1_{\mathrm{\acute{e}t}}(\operatorname{Spec}(O_{K,S}),\mathbb Z_p(n))
\to
\bigoplus_{w\in\Sigma_K}H^1(\kappa(w),\mathbb Z_p(n))
\right).
\]
Moreover, the $\mathbb Z_p$-torsion subgroup of
\(
H^1_{\mathrm{\acute{e}t}}(\operatorname{Spec}(O_{K,S}),\mathbb Z_p(n))
\)
identifies, via the boundary morphism associated with
\[
0\to \mathbb Z_p(n)\to \mathbb Q_p(n)\to
\mathbb Q_p/\mathbb Z_p(n)\to 0,
\]
with
\(
H^0(K,\mathbb Q_p/\mathbb Z_p(n)).
\)
It is therefore enough to show that the composite map
\[
H^0(K,\mathbb Q_p/\mathbb Z_p(n))
\hookrightarrow
H^1_{\mathrm{\acute{e}t}}(\operatorname{Spec}(O_{K,S}),\mathbb Z_p(n))
\to
\bigoplus_{w\in\Sigma_K}H^1(\kappa(w),\mathbb Z_p(n))
\]
is injective.
Let $w\in\Sigma_K$. Since \(\Sigma\) is disjoint from \(S\), and since \(S\) contains the places of
\(k\) above \(p\), the place \(w\) is non-\(p\)-adic. For such
a place, the cohomology groups
\(
H^0(\kappa(w),\mathbb Q_p(n))
\) and \(
H^1(\kappa(w),\mathbb Q_p(n))
\)
are zero. Indeed, Frobenius acts on $\mathbb Q_p(n)$ through multiplication by
a scalar different from $1$. Hence the long exact sequence associated with
\[
0\to \mathbb Z_p(n)\to \mathbb Q_p(n)\to
\mathbb Q_p/\mathbb Z_p(n)\to 0
\]
gives a canonical isomorphism
\[
H^0(\kappa(w),\mathbb Q_p/\mathbb Z_p(n))
\simeq
H^1(\kappa(w),\mathbb Z_p(n)).
\]
By functoriality of the boundary maps, the composite
\[
H^0(K,\mathbb Q_p/\mathbb Z_p(n))
\hookrightarrow
H^1_{\mathrm{\acute{e}t}}(\operatorname{Spec}(O_{K,S}),\mathbb Z_p(n))
\to
H^1(\kappa(w),\mathbb Z_p(n))
\]
identifies with the restriction map
\[
H^0(K,\mathbb Q_p/\mathbb Z_p(n))
\to
H^0(\kappa(w),\mathbb Q_p/\mathbb Z_p(n))
\]
followed by the preceding isomorphism. 
This restriction map is injective. Indeed, since \(w\nmid p\), the
representation \(\mathbb Q_p/\mathbb Z_p(n)\) is unramified at \(w\), and hence
\[
H^0(K_w,\mathbb Q_p/\mathbb Z_p(n))
=
H^0(\kappa(w),\mathbb Q_p/\mathbb Z_p(n)).
\]
Thus the map in question identifies with the natural injective restriction map
\[
H^0(K,\mathbb Q_p/\mathbb Z_p(n))
\hookrightarrow
H^0(K_w,\mathbb Q_p/\mathbb Z_p(n)).
\]
Since \(\Sigma\) is non-empty, choosing any \(w\in\Sigma_K\) shows that the map
to the direct sum is injective. This proves the claim.\qed
\end{proof}
\subsubsection{Proof of Theorem \ref{Thm:annihilated-integrality-of-Theta-prime}}

Let $\Sigma$ be a non-empty finite set of finite places of $k$ disjoint from
$S$. We first cut out the $\mathfrak e^1_{K,n}$-part of the complex
$\mathcal C_{K,S,\Sigma}^{\bullet}$. Define
\[
\widetilde{\mathcal C}_{K,S,\Sigma}^{\bullet}
:=
\mathcal C_{K,S,\Sigma}^{\bullet}
\otimes_{\mathbb Z_p[G]}^{\mathrm L}
\mathfrak e^1_{K,n}\mathbb Z_p[G].
\]
Since $\mathfrak e^1_{K,n}$ is a central idempotent of $\mathbb Z_p[G]$, this
derived tensor product simply cuts out the $\mathfrak e^1_{K,n}$-component of
$\mathcal C_{K,S,\Sigma}^{\bullet}$. Hence
$\widetilde{\mathcal C}_{K,S,\Sigma}^{\bullet}$ is a perfect complex of
$\mathfrak e^1_{K,n}\mathbb Z_p[G]$-modules, acyclic outside degrees $0$ and
$1$, and
\[
H^0\bigl(\widetilde{\mathcal C}_{K,S,\Sigma}^{\bullet}\bigr)
=
\mathfrak e^1_{K,n}
H^1_{\Sigma}(\operatorname{Spec}(O_{K,S}),\mathbb Z_p(n)).
\]
Moreover, there is an exact sequence
\[
0
\longrightarrow
\mathfrak e^1_{K,n}
H^2_{\Sigma}(\operatorname{Spec}(O_{K,S}),\mathbb Z_p(n))
\longrightarrow
H^1\bigl(\widetilde{\mathcal C}_{K,S,\Sigma}^{\bullet}\bigr)
\longrightarrow
\mathfrak e^1_{K,n}
\bigl(\mathbb{Z}_pY_{K,n}^{+}\bigr)
\longrightarrow
0.
\]
By Lemma \ref{lem:Y-free-over-ed}, one has an isomorphism
\[
\mathfrak e^1_{K,n}
\bigl(\mathbb{Z}_pY_{K,n}^{+}\bigr)
\simeq
\mathfrak e^1_{K,n}\mathbb Z_p[G].
\]
Since $\mathfrak e^1_{K,n}\mathbb Z_p[G]$ is projective over itself, the
previous exact sequence splits. Thus, after choosing a splitting, one has
\[
H^1\bigl(\widetilde{\mathcal C}_{K,S,\Sigma}^{\bullet}\bigr)
\simeq
\mathfrak e^1_{K,n}
H^2_{\Sigma}(\operatorname{Spec}(O_{K,S}),\mathbb Z_p(n))
\oplus
\mathfrak e^1_{K,n}\mathbb Z_p[G].
\]
Let $D(\mathfrak e^1_{K,n}\mathbb Z_p[G])$ denote the derived category of $\mathfrak e^1_{K,n}\mathbb Z_p[G]$-modules and assume next that
\(\mathfrak e^1_{K,n}\neq 0\). We shall use the following representative of
$\widetilde{\mathcal C}_{K,S,\Sigma}^{\bullet}$.

\begin{lemma}
\label{lem:quadratic-representative-Sigma-cutout}
There exists a free $\mathfrak e^1_{K,n}\mathbb Z_p[G]$-module $F$ of finite
rank, with basis $b_1,\ldots,b_m$, and a representative of
$\widetilde{\mathcal C}_{K,S,\Sigma}^{\bullet}$ in
$D(\mathfrak e^1_{K,n}\mathbb Z_p[G])$ of the form
\[
F
\stackrel{\Psi}{\longrightarrow}
F,
\]
where the two terms are placed in degrees $0$ and $1$, respectively. Moreover,
the induced surjection
\(
F
\twoheadrightarrow
H^1\bigl(\widetilde{\mathcal C}_{K,S,\Sigma}^{\bullet}\bigr)
\)
sends $b_1$ to a generator of the free direct summand
\(
\mathfrak e^1_{K,n}\mathbb Z_p[G]
\)
and sends $b_2,\ldots,b_m$ to a set of generators of
\(
\mathfrak e^1_{K,n}
H^2_{\Sigma}(\operatorname{Spec}(O_{K,S}),\mathbb Z_p(n)).
\)
\end{lemma}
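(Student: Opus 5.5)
We use the standard construction of a two-term representative of a perfect complex whose cohomology is concentrated in degrees $0$ and $1$; the only extra input is that $H^0$ is $\mathbb Z_p$-torsion-free.

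\emph{Generators and the degree-one term.} Write $R:=\mathfrak e^1_{K,n}\mathbb Z_p[G]$. This ring is a finite product of local $\mathbb Z_p$-orders, hence semilocal and noetherian. The module $\mathfrak e^1_{K,n}H^2_{\Sigma}(\operatorname{Spec}(O_{K,S}),\mathbb Z_p(n))$ is finitely generated over $R$, being a quotient of $H^1$ of a perfect complex. Choose generators $y_2,\dots,y_m$ of it; repetitions or zeros are allowed, which lets us adjust $m$ later. Using the splitting fixed above,
\[
H^1\bigl(\widetilde{\mathcal C}_{K,S,\Sigma}^{\bullet}\bigr)\simeq \mathfrak e^1_{K,n}H^2_{\Sigma}(\operatorname{Spec}(O_{K,S}),\mathbb Z_p(n))\oplus R,
\]
let $y_1$ be the generator $(0,1)$ of the free summand. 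Set $F_1:=R^m$ with basis $b_1,\dots,b_m$ and define $\pi\colon F_1\twoheadrightarrow H^1$ by $b_i\mapsto y_i$. This already gives the required behaviour of the surjection.

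\emph{The degree-zero term.} Represent $\widetilde{\mathcal C}_{K,S,\Sigma}^{\bullet}$ by a bounded complex $P^\bullet$ of finitely generated projective $R$-modules. Since $F_1$ is projective, $\pi$ lifts to a map of complexes $F_1[-1]\to P^\bullet$: lift along $Z^1(P)\twoheadrightarrow H^1$, and note that $P^\bullet$ may be truncated above degree $1$ because the complex is perfect with top cohomology in degree $1$. The cone gives a distinguished triangle $F_1[-1]\to \widetilde{\mathcal C}^\bullet\to Q^\bullet\to$. The long exact sequence and the surjectivity of $\pi$ show that $Q^\bullet$ has cohomology only in degree $0$, equal to an extension $M$ of $\ker\pi$ by $H^0(\widetilde{\mathcal C}^\bullet)$.

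Since $Q^\bullet$ is perfect and concentrated in degree $0$, the module $M$ has finite projective dimension over each local factor of $R$. It is $\mathbb Z_p$-torsion-free because $\ker\pi\subseteq F_1$ is torsion-free and $H^0=\mathfrak e^1_{K,n}H^1_{\Sigma}$ is torsion-free by Lemma \ref{lem:Sigma-cohomology-torsion-free}; this is the only place where $\Sigma\neq\varnothing$ is used.

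\emph{Main obstacle.} The key step is to show that $M$ is projective. Each local factor of $R$ is a Gorenstein $\mathbb Z_p$-order, since group rings are Gorenstein and $R$ is a direct factor of $\mathbb Z_p[G]$. Over a Gorenstein order, a $\mathbb Z_p$-lattice, that is, a maximal Cohen--Macaulay module, of finite projective dimension is projective by the Auslander--Buchsbaum formula. Hence $M$ is projective. Then $\widetilde{\mathcal C}^\bullet$ is represented by $M\xrightarrow{\Psi'}F_1$ in degrees $0$ and $1$. If one prefers to avoid this commutative-algebra input, the standard alternative is to take a representative $P^0\to P^1$ and apply Schanuel's lemma to the truncation $\tau_{\geq 0}$, using that $H^0$ is a lattice, to get a projective degree-zero term.

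\emph{Making both terms free of the same rank.} Compare Euler characteristics: since $\Sigma$-modification does not change ranks, $\mathbb C_p\otimes M$ and $\mathbb C_p\otimes F_1$ have the same rank on every character component. The rank of $H^0\otimes\mathbb C_p$ equals $\dim e_\chi\mathbb C_pK_{2n-1}(O_K)=\dim e_\chi\mathbb C_pY^+_{K,n}=1$, which matches the contribution of the free summand generated by $y_1$. So $M$ and $F_1$ have equal local ranks. A finitely generated projective module over a product of local rings is free on each factor with rank determined generically, exactly as in the proof of Lemma \ref{lem:Y-free-over-ed}. Hence $M\simeq R^m=F$, and transporting $\Psi'$ along this isomorphism gives $\Psi\colon F\to F$ with the stated properties.
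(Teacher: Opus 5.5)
Your proposal is correct and follows essentially the same route as the paper. You build the free degree-one term $F$ surjecting onto $H^1$ via the chosen generators, get a degree-zero term of finite projective dimension that is $\mathbb Z_p$-torsion-free by Lemma \ref{lem:Sigma-cohomology-torsion-free}, deduce projectivity from Auslander--Buchsbaum, and obtain freeness of rank $m$ by comparing generic ranks through the regulator. The differences are cosmetic: you construct the degree-zero term explicitly as $H^0$ of the cone of $F[-1]\to\widetilde{\mathcal C}_{K,S,\Sigma}^{\bullet}$ rather than citing the standard construction, and the Gorenstein property you invoke is superfluous, since Auslander--Buchsbaum over the one-dimensional local factors of depth $1$ already suffices.
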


\begin{proof}
Choose a finite set of generators of
\(
\mathfrak e^1_{K,n}
H^2_{\Sigma}(\operatorname{Spec}(O_{K,S}),\mathbb Z_p(n))
\)
and add one further generator for the free direct summand
\(
\mathfrak e^1_{K,n}\mathbb Z_p[G]
\)
of
\(
H^1\bigl(\widetilde{\mathcal C}_{K,S,\Sigma}^{\bullet}\bigr).
\)
This gives a finite free $\mathfrak e^1_{K,n}\mathbb Z_p[G]$-module $F$, with
basis $b_1,\ldots,b_m$, and a surjection
\[
F
\twoheadrightarrow
H^1\bigl(\widetilde{\mathcal C}_{K,S,\Sigma}^{\bullet}\bigr)
\]
with the stated properties.
Since $\widetilde{\mathcal C}_{K,S,\Sigma}^{\bullet}$ is perfect and acyclic
outside degrees $0$ and $1$, the standard construction of two-term
representatives for perfect complexes (see e.g., \cite[\S 3]{Burns00}) gives a representative
\[
\mathcal K
\stackrel{\Phi}{\longrightarrow}
F
\]
in $D(\mathfrak e^1_{K,n}\mathbb Z_p[G])$, with $\mathcal K$ finitely generated
and of finite projective dimension over
$\mathfrak e^1_{K,n}\mathbb Z_p[G]$. Its cohomology gives an exact sequence
\[
0
\longrightarrow
H^0\bigl(\widetilde{\mathcal C}_{K,S,\Sigma}^{\bullet}\bigr)
\longrightarrow
\mathcal K
\longrightarrow
F
\longrightarrow
H^1\bigl(\widetilde{\mathcal C}_{K,S,\Sigma}^{\bullet}\bigr)
\longrightarrow
0.
\]
By Lemma \ref{lem:Sigma-cohomology-torsion-free},
\[
H^0\bigl(\widetilde{\mathcal C}_{K,S,\Sigma}^{\bullet}\bigr)
=
\mathfrak e^1_{K,n}
H^1_{\Sigma}(\operatorname{Spec}(O_{K,S}),\mathbb Z_p(n))
\]
is $\mathbb Z_p$-torsion-free. Since the image of $\mathcal K$ in $F$ is a
submodule of the $\mathbb Z_p$-torsion-free module $F$, it follows that
$\mathcal K$ is also $\mathbb Z_p$-torsion-free.
The ring $\mathfrak e^1_{K,n}\mathbb Z_p[G]$ is a direct summand of
$\mathbb Z_p[G]$, and is therefore finite and $\mathbb Z_p$-torsion-free over
$\mathbb Z_p$. Hence its localizations at maximal ideals are one-dimensional
local rings of depth $1$. Since $\mathcal K$ has finite projective dimension
and is $\mathbb Z_p$-torsion-free, the Auslander--Buchsbaum formula \cite[Thm. 19.9]{Eisenbud} shows that
$\mathcal K$ is projective over
$\mathfrak e^1_{K,n}\mathbb Z_p[G]$.

It remains to see that $\mathcal K$ is free of the same rank as $F$. After
tensoring with $\mathbb Q_p$, the local $\Sigma$-terms vanish, and the
regulator isomorphism gives
\[
\mathbb Q_p
H^0\bigl(\widetilde{\mathcal C}_{K,S,\Sigma}^{\bullet}\bigr)
\simeq
\mathbb Q_p
H^1\bigl(\widetilde{\mathcal C}_{K,S,\Sigma}^{\bullet}\bigr).
\]
Therefore the exact sequence above gives
\(
\mathbb Q_p\mathcal K
\simeq
\mathbb Q_pF
\)
as
$\mathfrak e^1_{K,n}\mathbb Q_p[G]$-modules. Since
$\mathfrak e^1_{K,n}\mathbb Z_p[G]$ is semilocal, a finitely generated
projective module whose scalar extension to
$\mathfrak e^1_{K,n}\mathbb Q_p[G]$ is free of the same rank on every factor is
itself free. Hence $\mathcal K$ is free of the same rank as $F$.

\noindent After choosing an isomorphism $\mathcal K\simeq F$, the representative
\(
\mathcal K
\stackrel{\Phi}{\longrightarrow}
F
\)
becomes a representative of the desired form
\[
F
\stackrel{\Psi}{\longrightarrow}
F.
\]
The construction preserves the chosen surjection from the degree-one term onto
$H^1(\widetilde{\mathcal C}_{K,S,\Sigma}^{\bullet})$, and hence the asserted
properties of the basis $b_1,\ldots,b_m$ hold.\qed
\end{proof}
Next, recall that we have fixed an identification, provided by Lemma \ref{lem:Y-free-over-ed},
\[
\mathfrak e_{K,n}^1
\bigl(\mathbb{Z}_pY_{K,n}^{+}\bigr)
\simeq
\mathfrak e_{K,n}^1\mathbb Z_p[G].
\]
Through this identification, the \(\mathfrak e_{K,n}^1\)-component of the
trivialization \(\nabla_{K,S}\) introduced in
\S \ref{ETNCnegative-weight} is given by the composition
\[
\begin{aligned}
\mathfrak e_{K,n}^1\mathbb C_p
\operatorname{det}_{\mathbb Z_p[G]}(\mathcal C_{K,S}^{\bullet})
\stackrel{\pi}{\simeq}&\;
\operatorname{det}_{\mathfrak e_{K,n}^1\mathbb C_p[G]}
\bigl(\mathfrak e_{K,n}^1\mathbb C_pK_{2n-1}(O_K)\bigr)\\
\stackrel{\widetilde r^B_{K,n}}{\simeq}&\;
\operatorname{det}_{\mathfrak e_{K,n}^1\mathbb C_p[G]}
\bigl(\mathfrak e_{K,n}^1\mathbb C_p[G]\bigr)
\simeq
\mathfrak e_{K,n}^1\mathbb C_p[G].
\end{aligned}
\]
Here \(\pi\) denotes the projection, in the passage to cohomology, onto the
determinant of
\[
\mathfrak e_{K,n}^1\mathbb C_pK_{2n-1}(O_K),
\]
after identifying the determinant of the free rank-one quotient
\[
\mathfrak e_{K,n}^1
\bigl(\mathbb C_pY_{K,n}^{+}\bigr)
\simeq
\mathfrak e_{K,n}^1\mathbb C_p[G]
\]
with \(\mathfrak e_{K,n}^1\mathbb C_p[G]\). Since
\[
\mathfrak e_{K,n}^1\mathbb C_pK_{2n-1}(O_K)
\simeq
\mathfrak e_{K,n}^1\mathbb C_p[G],
\]
we identify
\[
\operatorname{det}_{\mathfrak e_{K,n}^1\mathbb C_p[G]}
\bigl(\mathfrak e_{K,n}^1\mathbb C_pK_{2n-1}(O_K)\bigr)
\]
with
\[
\mathfrak e_{K,n}^1\mathbb C_pK_{2n-1}(O_K),
\]
and the second isomorphism above is the Beilinson regulator on the
\(\mathfrak e_{K,n}^1\)-component.
By the ETNC, there exists a generator
\[
\mathfrak z\in
\operatorname{det}_{\mathbb Z_p[G]}(\mathcal C_{K,S}^{\bullet})
\]
such that
\[
\nabla_{K,S}(\mathfrak z)=\theta^*_{K/k,S}(1-n).
\]
After applying \(\mathfrak e_{K,n}^1\), we get
\[
\widetilde r^B_{K,n}\bigl(\pi(\mathfrak e_{K,n}^1\mathfrak z)\bigr)
=
\mathfrak e_{K,n}^1\theta^*_{K/k,S}(1-n)
=
\theta'_{K/k,S}(1-n).
\]
On the other hand, by definition,
\[
\widetilde r^B_{K,n}\bigl(\Theta'_{K/k,S}(1-n)\bigr)
=
\theta'_{K/k,S}(1-n).
\]
Since \(\widetilde r^B_{K,n}\) is an isomorphism, it follows that
\begin{equation}\label{pi_To_Theta}
\pi(\mathfrak e_{K,n}^1\mathfrak z)
=
\Theta'_{K/k,S}(1-n).
\end{equation}
Now put
\[
\Delta_{\Sigma}:=
\prod_{v\in\Sigma}
\bigl(1-\sigma_v^{-1}\operatorname N v^n\bigr)
\in \mathbb Z_p[G].
\]
The distinguished triangle
\[
\mathcal C_{K,S,\Sigma}^{\bullet}
\to
\mathcal C_{K,S}^{\bullet}
\to
\bigoplus_{w\in\Sigma_K}R\Gamma(\kappa(w),\mathbb Z_p(n))[1]
\to
\]
and the description of the local term as the two-term complex
\[
\left[
\bigoplus_{v\in\Sigma}\mathbb Z_p[G]
\stackrel{(1-\sigma_v^{-1}\operatorname N v^n)_{v\in\Sigma}}{\longrightarrow}
\bigoplus_{v\in\Sigma}\mathbb Z_p[G]
\right]
\]
show, via the determinant isomorphism attached to the triangle, that the
element
\(
\mathfrak z_{\Sigma}:=\Delta_{\Sigma}\cdot\mathfrak z,
\)
is a generator of
\(
\operatorname{det}_{\mathbb Z_p[G]}(\mathcal C_{K,S,\Sigma}^{\bullet}).
\)
Here the notation \(\mathfrak z_{\Sigma}:=\Delta_{\Sigma}\cdot\mathfrak z\) is
understood through the above determinant isomorphism. Therefore
\[
\pi_{\Sigma}(\mathfrak e_{K,n}^1\mathfrak z_{\Sigma})
=
\Delta_{\Sigma}\,
\pi(\mathfrak e_{K,n}^1\mathfrak z),
\]
where \(\pi_{\Sigma}\) denotes the analogous passage-to-cohomology map for
\(\mathcal C_{K,S,\Sigma}^{\bullet}\). Consequently, by (\ref{pi_To_Theta}), we have
\begin{equation}\label{pi_To_DeltaTheta}
\pi_{\Sigma}(\mathfrak e_{K,n}^1\mathfrak z_{\Sigma})
=
\Delta_{\Sigma}
\Theta'_{K/k,S}(1-n).
\end{equation}
 Let \(b_1^*,\ldots,b_m^*\) be the basis of
\(\operatorname{Hom}_{\mathfrak e^1_{K,n}\mathbb Z_p[G]}(F,\mathfrak e^1_{K,n}\mathbb Z_p[G])\) dual to \(b_1,\ldots,b_m\), and set
\[
\Psi_i:=b_i^*\circ \Psi\in \operatorname{Hom}_{\mathfrak e^1_{K,n}\mathbb Z_p[G]}(F,\mathfrak e^1_{K,n}\mathbb Z_p[G]).
\]
Following the convention of \cite[\S 4.1]{BKS0}, we write
\[
\Psi_2\wedge\cdots\wedge\Psi_m
\]
for the rank-reduction homomorphism
\[
\bigwedge_{\mathfrak e^1_{K,n}\mathbb Z_p[G]}^m F
\longrightarrow F.
\]
Explicitly, on a decomposable element \(x_1\wedge\cdots\wedge x_m\), it is
given by
\[
(\Psi_2\wedge\cdots\wedge\Psi_m)(x_1\wedge\cdots\wedge x_m)
=
\sum_{i=1}^m
(-1)^{i+1}
\det\bigl(\Psi_j(x_\ell)\bigr)_{\substack{2\leq j\leq m\\ \ell\neq i}}
x_i.
\]
We use the same notation after extension of scalars to \(\mathbb C_p\).
\begin{proposition}
\label{prop:Sigma-integrality-Theta-prime}
Let $\Sigma$ be a non-empty finite set of finite places of $k$ disjoint from
$S$, and put
\(
\Delta_{\Sigma}:=
\prod_{v\in\Sigma}
\bigl(1-\sigma_v^{-1}\operatorname N v^n\bigr)
\).
Then
\[
\Delta_{\Sigma}\Theta'_{K/k,S}(1-n)
\in
\mathbb Z_pK_{2n-1}(O_K)_{\mathrm{tf}}.
\]
\end{proposition}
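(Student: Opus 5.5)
The plan is to compute the image of the integral generator \(\mathfrak e^1_{K,n}\mathfrak z_\Sigma\) of \(\operatorname{det}(\widetilde{\mathcal C}^\bullet_{K,S,\Sigma})\) under \(\pi_\Sigma\) explicitly. We use the quadratic representative \(F\stackrel{\Psi}{\to}F\) of Lemma \ref{lem:quadratic-representative-Sigma-cutout} and the rank-reduction map \(\Psi_2\wedge\cdots\wedge\Psi_m\). Once this is done, the result follows from (\ref{pi_To_DeltaTheta}).

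\textbf{Step 1: identify the determinant with the top exterior power.} With the representative \(F\stackrel{\Psi}{\to}F\), one has
\[
\operatorname{det}_{\mathfrak e^1_{K,n}\mathbb Z_p[G]}(\widetilde{\mathcal C}^\bullet_{K,S,\Sigma})
=\bigwedge^m F\otimes\bigl(\bigwedge^m F\bigr)^{*}.
\]
Since \(\mathfrak e^1_{K,n}\mathfrak z_\Sigma\) is a generator, it equals \(u\cdot(b_1\wedge\cdots\wedge b_m)\otimes(b_1^*\wedge\cdots\wedge b_m^*)\) for some unit \(u\in(\mathfrak e^1_{K,n}\mathbb Z_p[G])^\times\).

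\textbf{Step 2: compute the passage-to-cohomology map.} After \(\otimes\mathbb C_p\), the cohomology is as follows. \(H^0=\ker\Psi\) is \(\mathfrak e^1\mathbb C_pK_{2n-1}(O_K)\), which is free of rank one. The torsion module \(\mathfrak e^1H^2_\Sigma\) vanishes after rationalization, so \(H^1=\operatorname{coker}\Psi\) is free of rank one, generated by the image of \(b_1\). Hence \(\mathbb C_p\operatorname{im}\Psi\) is spanned by \(\Psi(b_2),\dots,\Psi(b_m)\) modulo \(b_1\), i.e. \(b_1^*\) vanishes on the image modulo a complement. The standard computation (as in \cite[\S 4.1]{BKS0}, the rank-one case of Burns--Kurihara--Sano) then gives
\[
\pi_\Sigma\bigl((b_1\wedge\cdots\wedge b_m)\otimes(b_1^*\wedge\cdots\wedge b_m^*)\bigr)=\pm(\Psi_2\wedge\cdots\wedge\Psi_m)(b_1\wedge\cdots\wedge b_m),
\]
up to the fixed identification of the rank-one quotient \(\mathfrak e^1\mathbb Z_pY^+_{K,n}\) with \(\mathfrak e^1\mathbb Z_p[G]\), with \(b_1\) mapping to a generator.

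\textbf{Step 3: conclude integrality.} The rank-reduction element lies in \(F\), since all entries \(\Psi_j(x_\ell)\) are integral. It also lies in \(\ker(\Psi\otimes\mathbb C_p)\), because each \(\Psi_j\) vanishes on it. This is the usual Cramer-type identity \(\Psi_j\bigl((\Psi_2\wedge\cdots\wedge\Psi_m)(x)\bigr)=(\Psi_j\wedge\Psi_2\wedge\cdots\wedge\Psi_m)(x)=0\) for \(j\ge2\). For \(j=1\), I will use that \(\Psi_1\) becomes a \(\mathbb C_p[G]\)-linear combination of \(\Psi_2,\dots,\Psi_m\) rationally, since \(\operatorname{coker}\) is of rank one generated by \(b_1\).

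It follows that the element lies in
\[
F\cap\ker(\Psi\otimes\mathbb C_p)=\ker\Psi=H^0(\widetilde{\mathcal C}^\bullet)=\mathfrak e^1H^1_\Sigma(\operatorname{Spec}(O_{K,S}),\mathbb Z_p(n)).
\]
This module is torsion-free by Lemma \ref{lem:Sigma-cohomology-torsion-free}. It injects into \(\mathfrak e^1\mathbb Z_pK_{2n-1}(O_K)\), and, being torsion-free, into the torsion-free quotient \(\mathbb Z_pK_{2n-1}(O_K)_{\mathrm{tf}}\). Multiplying by the unit \(u\) and using (\ref{pi_To_DeltaTheta}) gives
\[
\Delta_\Sigma\Theta'_{K/k,S}(1-n)\in\mathbb Z_pK_{2n-1}(O_K)_{\mathrm{tf}}.
\]

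\textbf{Main obstacle.} The delicate step is Step 2. It requires checking that \(\pi_\Sigma\) is compatible with the \(\pi\) used in (\ref{pi_To_Theta}), including the identification of \(H^0\otimes\mathbb C_p\) with \(\mathbb C_pK_{2n-1}(O_K)\) (the \(\Sigma\)-terms vanish rationally). It also requires controlling signs and the normalization on \(b_1\), so that no non-integral scalar enters. I would first verify this on a single character component \(e_\chi\), where everything reduces to the ordinary linear-algebra identity relating a determinant on a complex to the adjugate formula.
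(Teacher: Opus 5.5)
Your proposal is correct and is essentially the paper's own argument. You use the representative $F\xrightarrow{\Psi}F$ of Lemma \ref{lem:quadratic-representative-Sigma-cutout} and the Burns--Kurihara--Sano rank-reduction formula to place $\pi_{\Sigma}(\mathfrak e^1_{K,n}\mathfrak z_{\Sigma})$ in $F$, then $F\cap\ker(\mathbb C_p\Psi)=\ker\Psi=\mathfrak e^1_{K,n}H^1_{\Sigma}(\operatorname{Spec}(O_{K,S}),\mathbb Z_p(n))$, Lemma \ref{lem:Sigma-cohomology-torsion-free} and \eqref{pi_To_DeltaTheta}, exactly as the paper does. The only difference is that you check membership in $\ker(\mathbb C_p\Psi)$ via the Cramer identity, where the paper reads it off from $\pi_{\Sigma}$ taking values in $\mathbb C_pH^0(\widetilde{\mathcal C}^{\bullet}_{K,S,\Sigma})$; also, in fact $\operatorname{im}(\mathbb C_p\Psi)=\langle b_2,\ldots,b_m\rangle$, so $\Psi_1=0$ outright, which is cleaner than the description of the image in your Step~2.
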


\begin{proof}
Let
\[
F\stackrel{\Psi}{\longrightarrow}F
\]
be the representative of
\(\widetilde{\mathcal C}_{K,S,\Sigma}^{\bullet}\) constructed in Lemma
\ref{lem:quadratic-representative-Sigma-cutout}, and let
\(b_1,\ldots,b_m\) be the chosen basis of the degree-one term. After tensoring
with \(\mathbb C_p\), the finite module
\(
\mathfrak e^1_{K,n}
H^2_{\Sigma}(\operatorname{Spec}(O_{K,S}),\mathbb Z_p(n))
\)
vanishes. Hence, by construction,
\(
\operatorname{coker}(\mathbb C_p\Psi)
\)
is generated by the image of \(b_1\), and
\[
\operatorname{im}(\mathbb C_p\Psi)
=
\langle b_2,\ldots,b_m\rangle_{\mathfrak e^1_{K,n}\mathbb C_p[G]}.
\]
Therefore, by the rank-reduction formula of \cite[Lemma 4.3]{BKS0}, there exists
\[
a\in
\bigwedge_{\mathfrak e^1_{K,n}\mathbb Z_p[G]}^m F
\]
such that
\[
\pi_{\Sigma}(\mathfrak e^1_{K,n}\mathfrak z_{\Sigma})
=
(-1)^{m-1}
(\Psi_2\wedge\cdots\wedge\Psi_m)(a).
\]
In particular,
\[
\pi_{\Sigma}(\mathfrak e^1_{K,n}\mathfrak z_{\Sigma})\in F.
\]
On the other hand, by (\ref{pi_To_DeltaTheta}), we have
\[
\pi_{\Sigma}(\mathfrak e^1_{K,n}\mathfrak z_{\Sigma})
=
\Delta_{\Sigma}\Theta'_{K/k,S}(1-n).
\]
The right-hand side belongs, by definition, to
\[
\mathfrak e^1_{K,n}\mathbb C_pK_{2n-1}(O_K)
=
\mathbb C_p
H^0\bigl(\widetilde{\mathcal C}_{K,S,\Sigma}^{\bullet}\bigr).
\]
Thus it is killed by \(\mathbb C_p\Psi\). Since it also belongs to the integral
lattice \(F\), and since \(F\) is \(\mathbb Z_p\)-torsion-free, it follows that
\[
\Delta_{\Sigma}\Theta'_{K/k,S}(1-n)
\in
\ker(\Psi)
=
H^0\bigl(\widetilde{\mathcal C}_{K,S,\Sigma}^{\bullet}\bigr).
\]
Therefore
\[
\Delta_{\Sigma}\Theta'_{K/k,S}(1-n)
\in
\mathfrak e^1_{K,n}
H^1_{\Sigma}(\operatorname{Spec}(O_{K,S}),\mathbb Z_p(n)).
\]
By Lemma \ref{lem:Sigma-cohomology-torsion-free}, this module is
\(\mathbb Z_p\)-torsion-free and embeds into
\(
\mathbb Z_pK_{2n-1}(O_K)_{\mathrm{tf}}.
\)
This proves the claim.\qed
\end{proof}
We can now finish the proof of Theorem
\ref{Thm:annihilated-integrality-of-Theta-prime}.
Recall that \(S\) contains the archimedean places, the ramified places and the places above \(p\).
By \cite[Lemma 6.9(2)]{GreitherPopescu}, the ideal
\[
\operatorname{Ann}_{\mathbb Z_p[G]}
\bigl(\mathbb Z_pK_{2n-1}(O_K)_{\mathrm{tors}}\bigr)
\]
is generated over \(\mathbb Z_p[G]\) by the elements
\(
\Delta_{\Sigma}
=
\prod_{v\in\Sigma}
\bigl(1-\sigma_v^{-1}\operatorname N v^n\bigr),
\)
where \(\Sigma\) runs over the non-empty finite sets of finite places of \(k\)
which are disjoint from \(S\).
By Proposition \ref{prop:Sigma-integrality-Theta-prime}, for every such
\(\Sigma\), one has
\[
\Delta_{\Sigma}\Theta'_{K/k,S}(1-n)
\in
\mathbb Z_pK_{2n-1}(O_K)_{\mathrm{tf}}.
\]
It follows that
\[
\operatorname{Ann}_{\mathbb Z_p[G]}
\bigl(\mathbb Z_pK_{2n-1}(O_K)_{\mathrm{tors}}\bigr)
\cdot
\Theta'_{K/k,S}(1-n)
\subseteq
\mathbb Z_pK_{2n-1}(O_K)_{\mathrm{tf}}.
\]
This proves Theorem
\ref{Thm:annihilated-integrality-of-Theta-prime}.\qed
\subsection{A Coates--Sinnott Theorem for First Derivatives of \(L\)-Functions}\label{Coates_Sinnott_Section}
We shall use the following elementary observation. For every non-empty finite
set \(\Sigma\) of finite places of \(k\) disjoint from \(S\) and integer \(n\geq 2\), one has
\[
\Delta_\Sigma
=
\prod_{v\in\Sigma}
\bigl(1-\sigma_v^{-1}\operatorname Nv^n\bigr)
\in
\mathbb Q_p[G]^\times.
\]
Indeed, after extending scalars to \(\mathbb C_p\), the \(\chi\)-component of
\(\Delta_\Sigma\) is
\[
\chi(\Delta_\Sigma)
=
\prod_{v\in\Sigma}
\bigl(1-\chi(\sigma_v)^{-1}\operatorname Nv^n\bigr).
\]
Each factor is non-zero, since \(\chi(\sigma_v)^{-1}\) is a root of unity while
\(\operatorname Nv^n>1\). Hence \(\Delta_\Sigma\) is invertible in
\(\mathbb C_p[G]\). Since \(\Delta_\Sigma\in\mathbb Q_p[G]\) and becomes invertible after extension
of scalars to \(\mathbb C_p[G]\), it is already invertible in
\(\mathbb Q_p[G]\). Indeed, multiplication by \(\Delta_\Sigma\) on
\(\mathbb Q_p[G]\) becomes an isomorphism after tensoring with \(\mathbb C_p\);
hence its determinant is non-zero over \(\mathbb Q_p\).

We next define a fractional \(\mathbb Z_p[G]\)-ideal in \(\mathbb Q_p[G]\)
which will play a role in the annihilation of the even \(K\)-groups. Since \(\mathbb Z_pK_{2n-1}(O_K)_{\mathrm{tf}}\) is
\(\mathbb Z_p\)-torsion-free, every \(\mathbb Z_p[G]\)-homomorphism
\[
f\in
\operatorname{Hom}_{\mathbb Z_p[G]}
\bigl(
\mathbb Z_pK_{2n-1}(O_K)_{\mathrm{tf}},
\mathbb Z_p[G]
\bigr)
\]
extends uniquely to a \(\mathbb Q_p[G]\)-homomorphism
\[
\widetilde f\colon
\mathbb Q_pK_{2n-1}(O_K)_{\mathrm{tf}}
\longrightarrow
\mathbb Q_p[G].
\]
Indeed, for \(x\in \mathbb Z_pK_{2n-1}(O_K)_{\mathrm{tf}}\) and
\(a\in\mathbb Z_p\setminus\{0\}\), one necessarily has
\[
\widetilde f(a^{-1}x)=a^{-1}f(x).
\]
Thus we have a natural injective map
\[
\operatorname{Hom}_{\mathbb Z_p[G]}
\bigl(
\mathbb Z_pK_{2n-1}(O_K)_{\mathrm{tf}},
\mathbb Z_p[G]
\bigr)
\hookrightarrow
\operatorname{Hom}_{\mathbb Q_p[G]}
\bigl(
\mathbb Q_pK_{2n-1}(O_K)_{\mathrm{tf}},
\mathbb Q_p[G]
\bigr),
\qquad
f\longmapsto \widetilde f.
\]
By Proposition \ref{prop:Sigma-integrality-Theta-prime}, for every non-empty
\(\Sigma\) as above one has
\[
\Delta_\Sigma\Theta'_{K/k,S}(1-n)
\in
\mathbb Z_pK_{2n-1}(O_K)_{\mathrm{tf}}.
\]
Since \(\Delta_\Sigma\in\mathbb Q_p[G]^\times\), it follows that
\[
\Theta'_{K/k,S}(1-n)
\in
\mathbb Q_pK_{2n-1}(O_K)_{\mathrm{tf}}.
\]
We define \(\mathcal J(\Theta'_{K/k,S}(1-n))\) to be the fractional
\(\mathbb Z_p[G]\)-ideal of \(\mathbb Q_p[G]\) generated by the elements
\[
\widetilde f(\Theta'_{K/k,S}(1-n)),
\qquad
f\in
\operatorname{Hom}_{\mathbb Z_p[G]}
\bigl(
\mathbb Z_pK_{2n-1}(O_K)_{\mathrm{tf}},
\mathbb Z_p[G]
\bigr).
\]
By Theorem \ref{Thm:annihilated-integrality-of-Theta-prime}, one has
\[
\operatorname{Ann}_{\mathbb Z_p[G]}
\bigl(\mathbb Z_pK_{2n-1}(O_K)_{\mathrm{tors}}\bigr)
\cdot
\Theta'_{K/k,S}(1-n)
\subseteq
\mathbb Z_pK_{2n-1}(O_K)_{\mathrm{tf}}.
\]
It follows that
\[
\operatorname{Ann}_{\mathbb Z_p[G]}
\bigl(\mathbb Z_pK_{2n-1}(O_K)_{\mathrm{tors}}\bigr)
\cdot
\mathcal J\bigl(\Theta'_{K/k,S}(1-n)\bigr)
\subseteq
\mathbb Z_p[G].
\]
Indeed, if \(a\) belongs to the above annihilator and
\[
f\in
\operatorname{Hom}_{\mathbb Z_p[G]}
\bigl(
\mathbb Z_pK_{2n-1}(O_K)_{\mathrm{tf}},
\mathbb Z_p[G]
\bigr),
\]
then
\[
a\,\widetilde f(\Theta'_{K/k,S}(1-n))
=
\widetilde f\bigl(a\,\Theta'_{K/k,S}(1-n)\bigr)
=
f\bigl(a\,\Theta'_{K/k,S}(1-n)\bigr)
\in
\mathbb Z_p[G].
\]
This subsection is devoted to proving the following Coates--Sinnott type
annihilation result.
\begin{theorem}
\label{Main_Theo_02}
We have
\[
\operatorname{Ann}_{\mathbb Z_p[G]}
\bigl(\mathbb Z_pK_{2n-1}(O_K)_{\mathrm{tors}}\bigr)
\cdot
\mathcal J\bigl(\Theta'_{K/k,S}(1-n)\bigr)
\subseteq
\operatorname{Ann}_{\mathbb Z_p[G]}
\bigl(\mathbb Z_pK_{2n-2}(O_{K,S})\bigr).
\]
\end{theorem}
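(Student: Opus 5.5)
By the generation statement of \cite[Lemma 6.9(2)]{GreitherPopescu}, the annihilator on the left is generated by the elements \(\Delta_\Sigma\). It will therefore suffice to fix a non-empty finite set \(\Sigma\) of finite places disjoint from \(S\) and a homomorphism \(f\in\operatorname{Hom}_{\mathbb Z_p[G]}(\mathbb Z_pK_{2n-1}(O_K)_{\mathrm{tf}},\mathbb Z_p[G])\), and to show that
\[
x:=f\bigl(\Delta_\Sigma\Theta'_{K/k,S}(1-n)\bigr)\in\mathbb Z_p[G]
\]
annihilates \(\mathbb Z_pK_{2n-2}(O_{K,S})=H^2_{\mathrm{\acute et}}(\operatorname{Spec}(O_{K,S}),\mathbb Z_p(n))\). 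Note that \(x\) lies in \(\mathbb Z_p[G]\) by Proposition \ref{prop:Sigma-integrality-Theta-prime}.

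First, \(x\) is killed by \(1-\mathfrak e^1_{K,n}\), because \(\Theta'\) lies in the \(\mathfrak e^1\)-part. So I would treat the complementary component separately. For that part I would argue that \(\mathcal J\)-elements act through the idempotent, so \(x\) is zero there. This is fine only if the annihilation of \((1-\mathfrak e^1)H^2\) by \(0\) is not required. In fact it is not: \(0\) annihilates everything. The real task is therefore on the \(\mathfrak e^1_{K,n}\)-part, and we must show that \(x\) kills \(\mathfrak e^1_{K,n}H^2_{\mathrm{\acute et}}\).

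Next, I would use the surjection \(H^2_\Sigma\to H^2_{\mathrm{\acute et}}\) from the \(\Sigma\)-long exact sequence. This reduces the claim to showing that \(x\) annihilates \(\mathfrak e^1 H^2_\Sigma\). That module is the torsion submodule of \(H^1(\widetilde{\mathcal C}^\bullet_{K,S,\Sigma})\), i.e.\ the kernel of the projection to the free summand, which is generated by \(b_1\).

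The plan is to work with the representative \(F\xrightarrow{\Psi}F\) of Lemma \ref{lem:quadratic-representative-Sigma-cutout} together with the formula from the proof of Proposition \ref{prop:Sigma-integrality-Theta-prime}:
\[
\Delta_\Sigma\Theta'=(-1)^{m-1}(\Psi_2\wedge\cdots\wedge\Psi_m)(a).
\]
Since \(\bigwedge^m F\) is free of rank one on \(b_1\wedge\cdots\wedge b_m\), I may take \(a=u\,b_1\wedge\cdots\wedge b_m\). The element \(\Delta_\Sigma\Theta'\) lies in \(H^0=\ker\Psi\subseteq F\), so \(f\) extends to \(F\) up to the following issue. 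The map \(f\) is defined on \(\mathbb Z_pK_{2n-1}(O_K)_{\mathrm{tf}}\supseteq \mathfrak e^1H^1_\Sigma\), and I restrict it to \(\mathfrak e^1H^1_\Sigma=\ker\Psi\). Restricting to \(\ker\Psi\) and extending to \(F\) is exactly where an obstruction could appear.

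Instead I would use the general result of Burns--Kurihara--Sano \cite{BKS0}: for a complex \(F\to F\) of this form, with \(H^0\) torsion-free and \(H^1\) having free rank-one quotient, the values \(\phi(\eta)\), for \(\phi\in\operatorname{Hom}(H^0,\mathbb Z_p[G])\) and \(\eta\) the rank-reduced basis element, generate the initial Fitting ideal \(\operatorname{Fitt}^0\bigl(\mathfrak e^1 H^1_{\mathrm{tors}}\text{-type}\bigr)\). More precisely, they generate \(\operatorname{Fitt}^1_{\mathfrak e^1\mathbb Z_p[G]}(H^1(\widetilde{\mathcal C}^\bullet))\). Concretely, \(\phi(\eta)\) is computed as an \(m\times m\) determinant whose first row is \(\phi\circ\iota\) and whose remaining rows are the \(\Psi_j\), with \(\iota\) a lift. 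By the Cramer-rule/adjugate argument, such a determinant lies in the \(1\)st Fitting ideal of \(\operatorname{coker}\Psi\) and hence annihilates \(\operatorname{coker}\Psi\) modulo its free quotient, i.e.\ it annihilates the torsion \(\mathfrak e^1H^2_\Sigma\).

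The key computation, and the main obstacle, is to show directly that the \(m\times m\) minor \(\det(\phi,\Psi_2,\ldots,\Psi_m)\) kills each \(\Psi\)-relation modulo \(b_1\). The first thing I would try is the standard adjugate identity: for \(y\in F\) with image \(\bar y\) in the torsion part, \(\det\cdot y\) is a combination of the columns of \((\Psi_2,\dots,\Psi_m)\) plus a multiple of \(b_1\). The second possible obstacle is extending \(f|_{\ker\Psi}\) to \(F\). Here I would use that \(\ker\Psi\) is a \(\mathbb Z_p\)-saturated submodule: \(F/\ker\Psi\cong \operatorname{im}\Psi\subseteq F\) is torsion-free, and \(\mathfrak e^1\mathbb Z_p[G]\) is Gorenstein, so \(\operatorname{Ext}^1(\operatorname{im}\Psi,\mathbb Z_p[G])=0\) for lattices and \(f\) extends.
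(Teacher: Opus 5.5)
Your argument is correct, but it takes a different route from the paper.

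\textbf{Your route.} You stay with the $\Sigma$-modified complex and its representative $F\xrightarrow{\Psi}F$ from Lemma \ref{lem:quadratic-representative-Sigma-cutout}. You extend $f|_{\ker\Psi}$ to some $\phi\colon F\to\mathfrak e^1_{K,n}\mathbb Z_p[G]$; your Gorenstein argument does this, since $F/\ker\Psi\cong\operatorname{im}\Psi$ is a lattice. Then $f(\Delta_\Sigma\Theta'_{K/k,S}(1-n))$ is $\pm u$ times the $m\times m$ determinant with rows $\phi,\Psi_2,\dots,\Psi_m$.

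The ``main obstacle'' you flag is settled at once by Laplace expansion along the $\phi$-row. Because $b_1$ maps to a generator of the free summand and $b_2,\dots,b_m$ map into $\mathfrak e^1_{K,n}H^2_\Sigma$, one has $\operatorname{im}\Psi\subseteq\langle b_2,\dots,b_m\rangle$, i.e.\ $\Psi_1=0$. So $(\Psi_j(b_\ell))_{2\le j\le m,\,1\le \ell\le m}$ is a presentation matrix of $\mathfrak e^1_{K,n}H^2_\Sigma$, and the expansion writes the value as a combination of its maximal minors. It therefore lies in $\operatorname{Fitt}^0(\mathfrak e^1_{K,n}H^2_\Sigma)\subseteq\operatorname{Fitt}^0(\mathfrak e^1_{K,n}\mathbb Z_pK_{2n-2}(O_{K,S}))$, and no black-box appeal to Burns--Kurihara--Sano is needed. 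The torsion annihilator then enters only through the Greither--Popescu generation by the $\Delta_\Sigma$, exactly as in the proof of Theorem \ref{Thm:annihilated-integrality-of-Theta-prime}.

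\textbf{The paper's route.} The paper fixes a single $\Sigma$ and works with the unmodified complex $\widetilde{\mathcal C}^\bullet_{K,S}$. It maps $\mathcal D^\bullet_\Sigma$ into it and combines the determinant of the cone, Greither's dévissage and the Burns--Greither Fitting lemma to obtain the exact identity of Theorem \ref{thm:Fitting-relation-cone-Sigma}. The factor $\operatorname{Ann}(\mathbb Z_pK_{2n-1}(O_K)_{\mathrm{tors}})$ is produced from $\operatorname{Ext}^1(\,\cdot\,,\mathbb Z_p[G])$ together with the cyclicity of the torsion. Finally $\Delta_\Sigma$ is cancelled in $\mathbb Q_p[G]$.

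\textbf{Comparison.} Your argument is shorter and reuses the rank-reduction formula already established for Proposition \ref{prop:Sigma-integrality-Theta-prime}, at the price of depending on the generation lemma. The paper's argument avoids that lemma at this stage and yields the finer Fitting equality of Theorem \ref{thm:Fitting-relation-cone-Sigma}. Both end with the same inclusion into $\operatorname{Fitt}^0(\mathfrak e^1_{K,n}\mathbb Z_pK_{2n-2}(O_{K,S}))$.
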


If \(\mathfrak e^1_{K,n}=0\), the theorem is immediate. We therefore assume
\(\mathfrak e^1_{K,n}\neq 0\).
The remainder of this subsection is devoted to the proof of Theorem \ref{Main_Theo_02}. 

\noindent We shall use the non-modified complex in this subsection. More precisely, we
cut out the \(\mathfrak e^1_{K,n}\)-part of \(\mathcal C_{K,S}^{\bullet}\) and
set
\[
\widetilde{\mathcal C}_{K,S}^{\bullet}
:=
\mathcal C_{K,S}^{\bullet}
\otimes_{\mathbb Z_p[G]}^{\mathrm L}
\mathfrak e^1_{K,n}\mathbb Z_p[G].
\]
Equivalently, this is the construction used above for
\(\mathcal C_{K,S,\Sigma}^{\bullet}\), in the special case \(\Sigma=\emptyset\).
Thus, by Proposition \ref{PROP-2-4}, the complex
\(\widetilde{\mathcal C}_{K,S}^{\bullet}\) is perfect over
\(\mathfrak e^1_{K,n}\mathbb Z_p[G]\), acyclic outside degrees \(0\) and \(1\),
and
\[
H^0\bigl(\widetilde{\mathcal C}_{K,S}^{\bullet}\bigr)
=
\mathfrak e^1_{K,n}\mathbb Z_pK_{2n-1}(O_K).
\]
Moreover, there is an exact sequence
\[
0
\longrightarrow
\mathfrak e^1_{K,n}\mathbb Z_pK_{2n-2}(O_{K,S})
\longrightarrow
H^1\bigl(\widetilde{\mathcal C}_{K,S}^{\bullet}\bigr)
\longrightarrow
\mathfrak e^1_{K,n}
\bigl(\mathbb{Z}_pY_{K,n}^{+}\bigr)
\longrightarrow
0.
\]
By Lemma \ref{lem:Y-free-over-ed}, one has an isomorphism
\[
\mathfrak e^1_{K,n}
\bigl(\mathbb{Z}_pY_{K,n}^{+}\bigr)
\simeq
\mathfrak e^1_{K,n}\mathbb Z_p[G].
\]
Since \(\mathfrak e^1_{K,n}\mathbb Z_p[G]\) is projective over itself, the
preceding exact sequence splits. We fix, once and for all, such a splitting and
therefore identify
\[
H^1\bigl(\widetilde{\mathcal C}_{K,S}^{\bullet}\bigr)
\simeq
\mathfrak e^1_{K,n}\mathbb Z_pK_{2n-2}(O_{K,S})
\oplus
\mathfrak e^1_{K,n}\mathbb Z_p[G].
\]
Let now \(\Sigma\) be a non-empty finite set of finite places of \(k\)
disjoint from \(S\), and put
\(
\Delta_{\Sigma}:=
\prod_{v\in\Sigma}
\bigl(1-\sigma_v^{-1}\operatorname N v^n\bigr).
\)
By Proposition \ref{prop:Sigma-integrality-Theta-prime}, the element
\[
\varepsilon_{\Sigma}:=
\Delta_{\Sigma}\Theta'_{K/k,S}(1-n)
\]
belongs to
\(
\mathbb Z_pK_{2n-1}(O_K)_{\mathrm{tf}}.
\)
Moreover, by construction,
\[
\widetilde r^B_{K,n}(\varepsilon_{\Sigma})
=
\Delta_{\Sigma}\theta'_{K/k,S}(1-n)
=
\theta'_{K/k,S,\Sigma}(1-n).
\]
where
\[
\theta'_{K/k,S,\Sigma}(1-n)
:=
\mathfrak e^1_{K,n}\theta^*_{K/k,S,\Sigma}(1-n).
\]
Since 
\(\theta'_{K/k,S,\Sigma}(1-n)\) is invertible in
\(\mathfrak e^1_{K,n}\mathbb C_p[G]\), the element
\(\varepsilon_{\Sigma}\) has trivial annihilator over
\(\mathfrak e^1_{K,n}\mathbb Z_p[G]\). Hence
\(
\varepsilon_{\Sigma}\mathbb Z_p[G]
\)
is a free \(\mathfrak e^1_{K,n}\mathbb Z_p[G]\)-module of rank one.
We now define a perfect complex of \(\mathfrak e^1_{K,n}\mathbb Z_p[G]\)-modules
\[
\mathcal D_{\Sigma}^{\bullet}:=
\left[
\varepsilon_{\Sigma}\mathbb Z_p[G]
\stackrel{0}{\longrightarrow}
\mathfrak e^1_{K,n}\mathbb Z_p[G]
\right],
\]
where the two terms are placed in degrees \(0\) and \(1\), respectively. 
Next, we define a morphism
\[
\mathcal D_{\Sigma}^{\bullet}
\longrightarrow
\widetilde{\mathcal C}_{K,S}^{\bullet}
\]
in \(D(\mathfrak e^1_{K,n}\mathbb Z_p[G])\). Choose a lift
\(
\widetilde\varepsilon_{\Sigma}
\in
\mathfrak e^1_{K,n}\mathbb Z_pK_{2n-1}(O_K)
=
H^0\bigl(\widetilde{\mathcal C}_{K,S}^{\bullet}\bigr)
\)
of \(\varepsilon_{\Sigma}\) under the natural projection
\[
\mathfrak e^1_{K,n}\mathbb Z_pK_{2n-1}(O_K)
\longrightarrow
\mathfrak e^1_{K,n}\mathbb Z_pK_{2n-1}(O_K)_{\mathrm{tf}}.
\]
We define the map on degree-zero cohomology by
\[
\varepsilon_{\Sigma}\mathbb Z_p[G]
\longrightarrow
H^0\bigl(\widetilde{\mathcal C}_{K,S}^{\bullet}\bigr),
\qquad
x\varepsilon_{\Sigma}
\longmapsto
x\widetilde\varepsilon_{\Sigma}.
\]
On degree-one cohomology, using the fixed splitting
\[
H^1\bigl(\widetilde{\mathcal C}_{K,S}^{\bullet}\bigr)
\simeq
\mathfrak e^1_{K,n}\mathbb Z_pK_{2n-2}(O_{K,S})
\oplus
\mathfrak e^1_{K,n}\mathbb Z_p[G],
\]
we take the natural inclusion
\[
\mathfrak e^1_{K,n}\mathbb Z_p[G]
\hookrightarrow
H^1\bigl(\widetilde{\mathcal C}_{K,S}^{\bullet}\bigr).
\]
Since both
\[
H^0(\mathcal D_{\Sigma}^{\bullet})
=
\varepsilon_{\Sigma}\mathbb Z_p[G]
\quad\text{and}\quad
H^1(\mathcal D_{\Sigma}^{\bullet})
=
\mathfrak e^1_{K,n}\mathbb Z_p[G]
\]
are projective \(\mathfrak e^1_{K,n}\mathbb Z_p[G]\)-modules, the higher
\(\operatorname{Ext}\)-terms in the hyper-Ext spectral sequence computing
\[
\operatorname{Hom}_{D(\mathfrak e^1_{K,n}\mathbb Z_p[G])}
\left(
\mathcal D_{\Sigma}^{\bullet},
\widetilde{\mathcal C}_{K,S}^{\bullet}
\right)
\]
vanish. Therefore the two cohomology maps above determine a unique morphism in
the derived category, which we denote by
\[
\iota_{\Sigma}\colon
\mathcal D_{\Sigma}^{\bullet}
\longrightarrow
\widetilde{\mathcal C}_{K,S}^{\bullet}.
\]
\begin{proposition}
\label{prop:cone-iota-Sigma}
The complex
\(
\operatorname{Cone}(\iota_{\Sigma})
\)
is a perfect complex of
\(\mathfrak e^1_{K,n}\mathbb Z_p[G]\)-modules, acyclic outside degrees
\(0\) and \(1\), with finite cohomology groups
\[
H^0(\operatorname{Cone}(\iota_{\Sigma}))
\simeq
{
\mathfrak e^1_{K,n}\mathbb Z_pK_{2n-1}(O_K)
}/{
\widetilde\varepsilon_{\Sigma}\mathbb Z_p[G]
}
\qquad
and
\qquad
H^1(\operatorname{Cone}(\iota_{\Sigma}))
\simeq
\mathfrak e^1_{K,n}\mathbb Z_pK_{2n-2}(O_{K,S}).
\]
Moreover, under the rational trivialization induced by the distinguished
triangle
\[
\mathcal D_{\Sigma}^{\bullet}
\longrightarrow
\widetilde{\mathcal C}_{K,S}^{\bullet}
\longrightarrow
\operatorname{Cone}(\iota_{\Sigma})
\longrightarrow
\mathcal D_{\Sigma}^{\bullet}[1],
\]
one has
\[
\operatorname{det}^{-1}_{\mathfrak e^1_{K,n}\mathbb Z_p[G]}
\bigl(\operatorname{Cone}(\iota_{\Sigma})\bigr)
=
\Delta_{\Sigma}
\mathfrak e^1_{K,n}\mathbb Z_p[G].
\]
\end{proposition}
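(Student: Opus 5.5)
The plan has three parts: read off the cohomology of the cone from the long exact sequence of the triangle, deduce perfection and finiteness, and then compute the determinant by comparing the cone with the \(\Sigma\)-modified complex through the ETNC generator.

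\emph{Perfection and cohomology.} Since \(\mathcal D_{\Sigma}^{\bullet}\) has projective terms, it is perfect. The complex \(\widetilde{\mathcal C}_{K,S}^{\bullet}\) is perfect by Proposition \ref{PROP-2-4}, so \(\operatorname{Cone}(\iota_{\Sigma})\) is perfect. Both complexes are acyclic outside degrees \(0,1\), so the long exact cohomology sequence of the triangle reads
\[
0\to H^{-1}(\operatorname{Cone})\to \varepsilon_\Sigma\mathbb Z_p[G]\to \mathfrak e^1_{K,n}\mathbb Z_pK_{2n-1}(O_K)\to H^0(\operatorname{Cone})\to \mathfrak e^1_{K,n}\mathbb Z_p[G]\to H^1(\widetilde{\mathcal C}_{K,S}^{\bullet})\to H^1(\operatorname{Cone})\to 0 .
\]
The map in degree \(0\) is \(x\varepsilon_\Sigma\mapsto x\widetilde\varepsilon_\Sigma\). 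It is injective because \(\varepsilon_\Sigma\) has trivial annihilator on the \(\mathfrak e^1_{K,n}\)-part, and the image of \(x\widetilde\varepsilon_\Sigma\) in the torsion-free quotient is \(x\varepsilon_\Sigma\). Hence \(H^{-1}(\operatorname{Cone})=0\).

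The map in degree \(1\) is the inclusion of the free summand in the fixed splitting, so it is injective. Therefore \(H^0(\operatorname{Cone})\) is the stated quotient, and \(H^1(\operatorname{Cone})\) is the complementary summand \(\mathfrak e^1_{K,n}\mathbb Z_pK_{2n-2}(O_{K,S})\). The latter is finite by Borel. The former is finite because, after tensoring with \(\mathbb Q_p\), the element \(\widetilde\varepsilon_\Sigma\) generates the free rank-one module \(\mathfrak e^1_{K,n}\mathbb Q_pK_{2n-1}(O_K)\). This uses the regulator isomorphism together with the invertibility of \(\theta'_{K/k,S,\Sigma}(1-n)\).

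\emph{Determinant computation.} By multiplicativity of determinants in the triangle,
\[
\det(\widetilde{\mathcal C}_{K,S}^{\bullet})=\det(\mathcal D_{\Sigma}^{\bullet})\otimes\det(\operatorname{Cone}(\iota_\Sigma)).
\]
Since the cone is acyclic after tensoring with \(\mathbb Q_p\), its determinant is identified with a fractional ideal, and this identification is compatible with the rational trivializations. I will trivialize \(\det(\mathcal D_{\Sigma}^{\bullet})\) rationally by the same passage-to-cohomology composed with the regulator as for \(\nabla_{K,S}\). A basis of \(\det \mathcal D_{\Sigma}^{\bullet}\) is \(\varepsilon_\Sigma\otimes 1^{*}\), which maps to \(\widetilde r^B_{K,n}(\varepsilon_\Sigma)=\Delta_\Sigma\theta'_{K/k,S}(1-n)\). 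By the ETNC and \eqref{pi_To_Theta}, a generator of \(\det \widetilde{\mathcal C}_{K,S}^{\bullet}\) maps to \(\theta'_{K/k,S}(1-n)\). The quotient of the two images gives
\[
\det^{-1}(\operatorname{Cone}(\iota_\Sigma))=\Delta_\Sigma\,\mathfrak e^1_{K,n}\mathbb Z_p[G].
\]
Here \(\theta'_{K/k,S}(1-n)\) cancels because it is a unit in \(\mathfrak e^1_{K,n}\mathbb C_p[G]\), and this cancellation is the one announced in the abstract.

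\emph{Main obstacle.} The hard step is checking that the trivializations agree, up to the correct inverse and sign conventions. Concretely, the triangle-induced rational trivialization, combined with the passage-to-cohomology maps for \(\mathcal D_{\Sigma}^{\bullet}\) and \(\widetilde{\mathcal C}_{K,S}^{\bullet}\) and the regulator, must coincide with the canonical identification of the determinant of an acyclic complex with finite cohomology. The choice of lift \(\widetilde\varepsilon_\Sigma\) and the torsion in \(K_{2n-1}\) only enter as finite modules, which disappear rationally. The splitting is also harmless, since \(\mathcal D_{\Sigma}^{\bullet}\) maps into the \(Y\)-summand compatibly with \(\pi\).

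I would first verify the agreement of trivializations on the two-term model. Take the representative \(F\xrightarrow{\Psi}F\) of Lemma \ref{lem:quadratic-representative-Sigma-cutout}, adapted to \(\Sigma=\emptyset\), and realize \(\iota_\Sigma\) on it as a map of complexes. The determinant identity then reduces to comparing explicit bases in degree \(0\) and degree \(1\).
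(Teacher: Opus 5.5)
Your argument is the paper's own: the long exact sequence of the triangle, injectivity of the degree-zero map via the torsion-free quotient, and injectivity of the degree-one map as the inclusion of the fixed free summand. Finiteness then comes from the full lattice $\widetilde\varepsilon_{\Sigma}\mathbb Z_p[G]$ and Borel, and $\det^{-1}$ of the cone is read off as the ratio of the generators $\Theta'_{K/k,S}(1-n)$ and $\varepsilon_\Sigma=\Delta_\Sigma\Theta'_{K/k,S}(1-n)$. The paper simply asserts the compatibility of trivializations that you single out as the main obstacle.

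One caution concerns how you propose to check that compatibility: it would fail as stated. A representative $F\xrightarrow{\Psi}F$ with free terms in degrees $0$ and $1$ cannot exist for $\widetilde{\mathcal C}_{K,S}^{\bullet}$ in general. Its $H^0=\ker\Psi\subseteq F$ is $\mathbb Z_p$-torsion-free, whereas $H^0(\widetilde{\mathcal C}_{K,S}^{\bullet})=\mathfrak e^1_{K,n}\mathbb Z_pK_{2n-1}(O_K)$ may have torsion. For example, if $k$ is imaginary quadratic then $\mathfrak e^1_{K,n}=1$ by \eqref{eq:35}, and $H^0(K,\mathbb Q_p/\mathbb Z_p(n))\neq 0$ as soon as $\mu_p\subset K$. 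This is exactly why Lemma \ref{lem:quadratic-representative-Sigma-cutout} requires $\Sigma\neq\varnothing$. You have two alternatives. You can use a three-term projective representative $[P^{-1}\to P^0\to P^1]$, obtained by resolving the degree-zero term of a representative $[C^0\to P]$, which has projective dimension at most $1$ by Auslander--Buchsbaum. Or you can invoke the general compatibility of the determinant functor with distinguished triangles and passage to cohomology. This applies here because the rational long exact sequence of the triangle consists of the isomorphisms $\mathbb Q_pH^i(\mathcal D^\bullet_\Sigma)\cong\mathbb Q_pH^i(\widetilde{\mathcal C}^\bullet_{K,S})$ induced by $\iota_\Sigma$.

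A minor point: the cancellation announced in the abstract is that of the auxiliary Euler factor $\Delta_\Sigma$ at the end of the proof of Theorem \ref{Main_Theo_02}, not that of $\theta'_{K/k,S}(1-n)$.
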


\begin{proof}
The perfectness follows immediately from the fact that both
\(\mathcal D_{\Sigma}^{\bullet}\) and
\(\widetilde{\mathcal C}_{K,S}^{\bullet}\) are perfect. The long exact
sequence of cohomology gives
\[
0\to
\varepsilon_{\Sigma}\mathbb Z_p[G]
\to
\mathfrak e^1_{K,n}\mathbb Z_pK_{2n-1}(O_K)
\to
H^0(\operatorname{Cone}(\iota_{\Sigma}))
\to
\mathfrak e^1_{K,n}\mathbb Z_p[G]
\to
H^1(\widetilde{\mathcal C}_{K,S}^{\bullet})
\to
H^1(\operatorname{Cone}(\iota_{\Sigma}))
\to 0.
\]
By construction, the first map sends
\(x\varepsilon_{\Sigma}\) to \(x\widetilde\varepsilon_{\Sigma}\). It is
injective because passing to the torsion-free quotient identifies it with the
injective map
\[
\varepsilon_{\Sigma}\mathbb Z_p[G]
\hookrightarrow
\mathbb Z_pK_{2n-1}(O_K)_{\mathrm{tf}}.
\]
The map $\mathfrak e^1_{K,n}\mathbb Z_p[G]
\to
H^1(\widetilde{\mathcal C}_{K,S}^{\bullet})$ is the inclusion of
\(\mathfrak e^1_{K,n}\mathbb Z_p[G]\) into the fixed free direct summand of
\[
H^1(\widetilde{\mathcal C}_{K,S}^{\bullet})
\simeq
\mathfrak e^1_{K,n}\mathbb Z_pK_{2n-2}(O_{K,S})
\oplus
\mathfrak e^1_{K,n}\mathbb Z_p[G].
\]
The asserted descriptions of \(H^0\) and \(H^1\) follow. These groups are
finite because
\(\widetilde\varepsilon_{\Sigma}\mathbb Z_p[G]\) is a full lattice in
\(\mathfrak e^1_{K,n}\mathbb Q_pK_{2n-1}(O_K)\), and
\(\mathbb Z_pK_{2n-2}(O_{K,S})\) is finite.
It remains to record the determinant. The determinant isomorphism attached to
the above distinguished triangle gives
\[
\operatorname{det}(\operatorname{Cone}(\iota_{\Sigma}))
=
\operatorname{det}(\widetilde{\mathcal C}_{K,S}^{\bullet})
\cdot
\operatorname{det}(\mathcal D_{\Sigma}^{\bullet})^{-1}.
\]
Under the passage to cohomology fixed above and the ETNC, the determinant of
\(\widetilde{\mathcal C}_{K,S}^{\bullet}\) is generated by
\(\Theta'_{K/k,S}(1-n)\), whereas the determinant of
\(\mathcal D_{\Sigma}^{\bullet}\) is generated by
\[
\varepsilon_{\Sigma}
=
\Delta_{\Sigma}\Theta'_{K/k,S}(1-n).
\]
Hence
\(
\operatorname{det}^{-1}(\operatorname{Cone}(\iota_{\Sigma}))
=
\Delta_{\Sigma}
\mathfrak e^1_{K,n}\mathbb Z_p[G],
\)
as claimed.\qed
\end{proof}
We shall use the following elementary dévissage for perfect complexes with
finite cohomology. The proof given below follows an argument
kindly communicated to the author by C. Greither in the case \(\Lambda=\mathbb Z_p[G]\). We record it in the
slightly more general form needed here.
\begin{lemma}
\label{lem:Greither-perfect-finite-order}
Let \(\Lambda\) be a commutative \(\mathbb Z_p\)-order in a finite-dimensional
\(\mathbb Q_p\)-algebra. Let \(C^{\bullet}\) be a perfect complex of
\(\Lambda\)-modules, acyclic outside degrees \(0\) and \(1\), and assume that
\(H^0(C^{\bullet})\) and \(H^1(C^{\bullet})\) are finite. Then there exists an
exact sequence of \(\Lambda\)-modules
\[
0
\longrightarrow
H^0(C^{\bullet})
\longrightarrow
Q^0
\stackrel{d}{\longrightarrow}
Q^1
\longrightarrow
H^1(C^{\bullet})
\longrightarrow
0
\]
such that \(Q^0\) and \(Q^1\) are finite \(\Lambda\)-modules of projective
dimension at most \(1\). Moreover, \(C^{\bullet}\) is isomorphic in the derived category
\(D(\Lambda)\) of \(\Lambda\)-modules to the complex
\(
[Q^0
\stackrel{d}{\longrightarrow}
Q^1],
\)
where \(Q^0\) and \(Q^1\) are placed in degrees \(0\) and \(1\), respectively.
\end{lemma}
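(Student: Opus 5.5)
We start from a two-term projective representative of \(C^\bullet\) and then quotient it by a suitable free sublattice to make both terms finite. Since \(C^\bullet\) is perfect and acyclic outside degrees \(0\) and \(1\), the standard construction (as in \cite[\S 3]{Burns00}, used already in Lemma \ref{lem:quadratic-representative-Sigma-cutout}) gives a representative \([P^0\stackrel{\partial}{\longrightarrow}P^1]\) in degrees \(0,1\). Here \(P^1\) can be taken finitely generated free, and \(P^0\) is finitely generated projective. The latter holds because \(P^0\) has finite projective dimension and is \(\mathbb Z_p\)-torsion-free: \(\ker\partial=H^0(C^\bullet)\) is finite, so it would need a separate check, and it is simpler to take the truncation directly from a bounded complex of finitely generated projectives by killing the higher terms with projectives. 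Since the cohomology is finite, \(\mathbb Q_p\partial\colon \mathbb Q_pP^0\to\mathbb Q_pP^1\) is an isomorphism of \(\mathbb Q_p\Lambda\)-modules.

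Next we pick the sublattice to quotient out. Choose a nonzerodivisor-free situation: \(P^1\) is free of rank \(r\), and I want a free submodule \(L^0\subseteq P^0\) such that \(\partial|_{L^0}\) is injective with image \(L^1:=\partial(L^0)\). In addition, \(L^0\cap\ker\partial=0\), and both \(P^0/L^0\) and \(P^1/L^1\) should be finite. The concrete choice is \(L^0:=p^N P^0\) when \(P^0\) is free; in general I first add a projective complement \(P'\) to both terms (via the identity map, which does not change the quasi-isomorphism class) so that \(P^0\) becomes free, and then take \(L^0:=p^NP^0\). Since \(\ker\partial\) is finite and \(P^0\) is \(\mathbb Z_p\)-torsion-free, we get \(\ker\partial=0\). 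Hence \(\partial\) is injective, and \(L^1=p^N\partial(P^0)\) has finite index in \(P^1\), because \(\partial(P^0)\) is a full lattice (rationally \(\partial\) is an isomorphism). So any \(N\geq 0\) works, e.g. \(N=1\).

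We then set \(Q^i:=P^i/L^i\) and let \(d\) be the map induced by \(\partial\). Each \(Q^i\) is finite, and its projective dimension is at most \(1\) because \(0\to L^i\to P^i\to Q^i\to 0\) is a resolution by projectives; \(L^1\cong L^0\) is projective since \(\partial\) is injective. The subcomplex \([L^0\stackrel{\sim}{\to}L^1]\) is acyclic, so the quotient map \([P^0\to P^1]\to[Q^0\to Q^1]\) is a quasi-isomorphism by the long exact sequence of the short exact sequence of complexes. This gives the isomorphism in \(D(\Lambda)\), and the four-term exact sequence is then just the statement \(\ker d=H^0\), \(\operatorname{coker}d=H^1\).

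The main obstacle is the first step: ensuring that the degree-\(0\) term is projective, rather than merely of finite projective dimension, in the two-term representative. The Auslander–Buchsbaum argument used earlier relied on torsion-freeness and on local depth-one rings, which holds for commutative orders only when these are reduced/Cohen–Macaulay (true here, as \(\Lambda\) is \(\mathbb Z_p\)-torsion-free of dimension one). I would therefore either invoke that argument again, since \(\ker\partial\) is finite and the image lies in the torsion-free \(P^1\), or avoid it by working with a bounded projective representative \(P^a\to\cdots\to P^1\) and observing that \(B^0:=\operatorname{im}(P^{-1}\to P^0)\) splits off, since the truncated complex below degree \(0\) is exact with projective terms. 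The quotient \(P^0/B^0\) is then projective.
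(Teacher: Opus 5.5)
\textbf{There is a genuine gap in your first step.} In any two-term complex $[P^0\xrightarrow{\partial}P^1]$ in degrees $0,1$ one has $H^0(C^\bullet)=\ker\partial\subseteq P^0$. If $P^0$ is projective, it is $\mathbb Z_p$-torsion-free, so a finite $\ker\partial$ must vanish, as you yourself derive. Thus a representative with projective degree-zero term exists only when $H^0(C^\bullet)=0$. Your construction therefore proves the lemma only in that case: your final sequence reads $0\to 0\to Q^0\to Q^1\to H^1(C^\bullet)\to 0$.

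Your proposed repair fails for the same reason. Exactness below degree $0$ shows that $B^0=\operatorname{im}(P^{-1}\to P^0)$ has finite projective dimension (here it is even projective). It does not show that $B^0$ is a direct summand of $P^0$, because $P^0/B^0$ contains $Z^0/B^0=H^0(C^\bullet)$ with $Z^0:=\ker(P^0\to P^1)$, and this is torsion. For example, take $\Lambda=\mathbb Z_p$ and $C^\bullet=[\mathbb Z_p\xrightarrow{p}\mathbb Z_p]$ in degrees $-1,0$; then $B^0=p\mathbb Z_p$ is not a summand of $\mathbb Z_p$. The case $H^0\neq 0$ is exactly the one needed later: for $\operatorname{Cone}(\iota_\Sigma)$ one has $H^0\simeq \mathfrak e^1_{K,n}\mathbb Z_pK_{2n-1}(O_K)/\widetilde\varepsilon_\Sigma\mathbb Z_p[G]$.

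\textbf{How the paper closes the gap.} Your overall strategy is the paper's: quotient a two-term representative by an acyclic subcomplex $[L^0\xrightarrow{\sim}L^1]$ of projective lattices of finite index. But you must allow the degree-zero term $C^0$ to be merely of finite projective dimension, since it necessarily contains the torsion $H^0(C^\bullet)$. You must then choose $L^0$ so that it meets $\ker d$ trivially and has projective image.

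Taking $L^0=p^NC^0$ does not work. For $p^N$ killing $H^0$, one has $p^NC^0\cong d(C^0)$, and this need not be projective; it fails whenever $H^0$ does not have finite projective dimension, e.g.\ $H^0=\mathbb F_p$ over $\mathbb Z_p[C_p]$.

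The paper instead chooses $a$ with $p^a$ killing both $H^0(C^\bullet)$ and $H^1(C^\bullet)$, so that $p^aP\subseteq d(C^0)$. It sets $A:=d^{-1}(p^aP)$ and checks that $d\colon p^aA\to p^{2a}P$ is an isomorphism: a kernel element has the form $p^ax$ with $x\in H^0(C^\bullet)$, hence is zero. This makes the image $p^{2a}P\cong P$ projective, and one takes $Q^0=C^0/p^aA$ and $Q^1=P/p^{2a}P$.

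You also lose your easy bound on $\operatorname{pd}Q^0$. Now $Q^0$ is a quotient of a module of finite projective dimension by a projective one, so you only get finite projective dimension. The bound $\operatorname{pd}Q^0\le 1$ then needs Auslander--Buchsbaum: finite modules have depth $0$ over the depth-one local rings $\Lambda_{\mathfrak m}$. Depth one holds for every $\mathbb Z_p$-order, since $p$ is a non-zero-divisor; reducedness plays no role.
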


\begin{proof}
Since \(C^{\bullet}\) is perfect and acyclic outside degrees \(0\) and \(1\),
the usual truncation argument gives (e.g., \cite[\S 3]{Burns00}) a representative
\[
C^0
\stackrel{d}{\longrightarrow}
P
\]
of \(C^{\bullet}\), where \(P\) is finitely generated projective and \(C^0\)
is finitely generated of finite projective dimension over \(\Lambda\). Thus one
has an exact sequence
\[
0
\longrightarrow
H^0(C^{\bullet})
\longrightarrow
C^0
\stackrel{d}{\longrightarrow}
P
\longrightarrow
H^1(C^{\bullet})
\longrightarrow
0.
\]
Choose an integer \(a>0\) such that \(p^a\) annihilates
\(H^1(C^{\bullet})\) and the \(\mathbb Z_p\)-torsion submodule of \(C^0\).
Then \(p^aP\subseteq d(C^0)\). Let \(A\) be the inverse image of \(p^aP\) in
\(C^0\). The map \(d\) induces a surjection
\(
A\longrightarrow p^aP.
\)
Multiplication by \(p^a\) gives a map
\(
p^aA\longrightarrow p^{2a}P.
\)
This map is an isomorphism. It is surjective by construction. To prove
injectivity, let \(y=p^ax\in p^aA\) be in the kernel. Then
\(d(y)=p^ad(x)=0\). Since \(P\) is \(\mathbb Z_p\)-torsion-free, this implies
\(d(x)=0\), hence \(x\in H^0(C^{\bullet})\). By the choice of \(a\), the group
\(H^0(C^{\bullet})\) is killed by \(p^a\), and so \(y=p^ax=0\).
Set
\[
Q^0:=C^0/p^aA,
\qquad
Q^1:=P/p^{2a}P.
\]
Then \(Q^0\) and \(Q^1\) are finite, and the quotient complex
\(
[Q^0
\longrightarrow
Q^1]
\)
fits into an exact sequence
\[
0
\longrightarrow
H^0(C^{\bullet})
\longrightarrow
Q^0
\longrightarrow
Q^1
\longrightarrow
H^1(C^{\bullet})
\longrightarrow
0.
\]
Moreover, the kernel complex
\[
p^aA
\longrightarrow
p^{2a}P
\]
is acyclic, since the displayed map is an isomorphism. Hence
\(
C^{\bullet}\simeq [Q^0\to Q^1]
\)
in \(D(\Lambda)\).
It remains to check the projective dimensions. The module \(Q^1\) has
projective dimension at most \(1\), since it is the cokernel of the injective
map
\[
P
\stackrel{p^{2a}}{\longrightarrow}
P.
\]
The module \(Q^0\) is finite and has finite projective dimension, since it is
the cokernel of the injection
\[
p^aA\hookrightarrow C^0,
\]
with \(p^aA\simeq p^{2a}P\) projective and \(C^0\) of finite projective
dimension.
Finally, let \(\mathfrak m\) be a maximal ideal of \(\Lambda\). Since
\(\Lambda\) is a \(\mathbb Z_p\)-order, \(\Lambda_{\mathfrak m}\) is a
one-dimensional local ring of depth \(1\). A non-zero finite \(\Lambda_{\mathfrak m}\)-
module has depth \(0\). The Auslander--Buchsbaum formula therefore shows that
any finite \(\Lambda_{\mathfrak m}\)-module of finite projective dimension has
projective dimension at most \(1\). Applying this to \(Q^0_{\mathfrak m}\) for
all \(\mathfrak m\), we get
\[
\operatorname{pd}_{\Lambda}(Q^0)\leq 1.
\]
This proves the lemma.\qed
\end{proof}
\begin{theorem}
\label{thm:Fitting-relation-cone-Sigma}
For every non-empty finite set $\Sigma$ of finite places of $k$ disjoint from
$S$, one has
\[
\begin{aligned}
\operatorname{Fitt}_{\mathfrak e^1_{K,n}\mathbb Z_p[G]}
\left(
\left(
\frac{
\mathfrak e^1_{K,n}\mathbb Z_pK_{2n-1}(O_K)
}{
\widetilde\varepsilon_{\Sigma}\mathbb Z_p[G]
}
\right)^{\vee}
\right)^\#
&=
\Delta_{\Sigma}\,
\operatorname{Fitt}_{\mathfrak e^1_{K,n}\mathbb Z_p[G]}
\left(
\mathfrak e^1_{K,n}\mathbb Z_pK_{2n-2}(O_{K,S})
\right).
\end{aligned}
\]
\end{theorem}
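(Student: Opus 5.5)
We apply Lemma~\ref{lem:Greither-perfect-finite-order} to the complex \(C^\bullet:=\operatorname{Cone}(\iota_\Sigma)\) over \(\Lambda:=\mathfrak e^1_{K,n}\mathbb Z_p[G]\); this is legitimate by Proposition~\ref{prop:cone-iota-Sigma}. We obtain \(C^\bullet\simeq[Q^0\stackrel{d}{\to}Q^1]\) with \(Q^0,Q^1\) finite of projective dimension at most one, and an exact sequence
\[
0\to H^0(C^\bullet)\to Q^0\to Q^1\to H^1(C^\bullet)\to 0 .
\]
For finite modules of projective dimension \(\le 1\) over the commutative order \(\Lambda\), the Fitting ideal is principal and generated by a nonzerodivisor. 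The determinant of such a module, viewed as a complex in degree \(0\), is \(\operatorname{Fitt}_\Lambda(Q)^{-1}\) under the canonical rational trivialization; the inverse appears because a resolution \(0\to P_1\stackrel{\phi}{\to}P_0\to Q\to 0\) gives \(\det(P_0)\det(P_1)^{-1}\) trivialized by \(\det\phi\). Hence
\[
\operatorname{det}^{-1}_\Lambda(C^\bullet)=\operatorname{Fitt}_\Lambda(Q^0)\operatorname{Fitt}_\Lambda(Q^1)^{-1},
\]
with sign conventions checked against the trivialization used in Proposition~\ref{prop:cone-iota-Sigma}. That proposition then gives
\[
\operatorname{Fitt}_\Lambda(Q^0)=\Delta_\Sigma\operatorname{Fitt}_\Lambda(Q^1).
\]
We also need compatibility: the trivialization of \(\det(C^\bullet)\) induced by the triangle must coincide with the acyclicity trivialization of \(C^\bullet\otimes\mathbb Q_p\) used for \([Q^0\to Q^1]\). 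Since \(C^\bullet\) has finite cohomology, \(C^\bullet\otimes\mathbb Q_p\) is acyclic and its trivialization is canonical, so this holds once we check that the triangle trivialization is the acyclic one. This is essentially how Proposition~\ref{prop:cone-iota-Sigma} was phrased.

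Next we relate these Fitting ideals to the cohomology. For the exact sequence of finite modules above, with the middle terms of projective dimension \(\le1\), we use the standard result of Burns--Greither and Cornacchia--Greither type:
\[
\operatorname{Fitt}_\Lambda(H^1)\operatorname{Fitt}_\Lambda(Q^0)=\operatorname{Fitt}_\Lambda(Q^1)\,\operatorname{Fitt}_\Lambda\bigl((H^0)^\vee\bigr)^\#.
\]
To prove it, I would first split the four-term sequence at \(M:=\operatorname{im} d\). Then I would use Pontryagin duality: for finite \(Q\) with \(\operatorname{pd}\le1\) one has \(\operatorname{Fitt}(Q^\vee)^\#=\operatorname{Fitt}(Q)\), because the transpose of a square presentation matrix presents \(\operatorname{Ext}^1_{\mathbb Z_p}(Q,\mathbb Z_p)\cong Q^\vee\) with the involution. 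Finally I would apply multiplicativity of Fitting ideals in short exact sequences when one term has projective dimension \(\le1\), to the sequences \(0\to M\to Q^1\to H^1\to 0\) and the dual of \(0\to H^0\to Q^0\to M\to 0\). Because \(\operatorname{Fitt}(Q^i)\) are invertible ideals, we can divide. Combining this with \(\operatorname{Fitt}(Q^0)=\Delta_\Sigma\operatorname{Fitt}(Q^1)\) and cancelling the invertible \(\operatorname{Fitt}(Q^1)\) gives
\[
\operatorname{Fitt}\bigl((H^0)^\vee\bigr)^\#=\Delta_\Sigma\operatorname{Fitt}(H^1).
\]
Substituting \(H^0\) and \(H^1\) from Proposition~\ref{prop:cone-iota-Sigma} yields the statement.

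The main obstacle is the four-term Fitting identity. Multiplicativity \(\operatorname{Fitt}(A)\operatorname{Fitt}(C)=\operatorname{Fitt}(B)\) fails in general, and here it must be arranged to hold exactly. My first attempt is to replace \(Q^0\to Q^1\) by a quasi-isomorphic complex with \(Q^0\) free modulo a nonzerodivisor-square matrix, namely a representative \(P\stackrel{\phi}{\to}P\) of finite-cokernel type, and to read off both Fitting ideals from a single matrix via the dual-complex trick of Burns--Greither. Here \((H^0)^\vee\) appears as \(H^1\) of \(\operatorname{RHom}(C^\bullet,\mathbb Q_p/\mathbb Z_p)\), whose Fitting ideal is computed by the transposed presentation. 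The remaining care is bookkeeping the involution \(\#\) and the sign and inverse convention of \(\det^{-1}\).
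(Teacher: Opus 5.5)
Your proposal follows the paper's proof step for step: Lemma~\ref{lem:Greither-perfect-finite-order} applied to \(\operatorname{Cone}(\iota_\Sigma)\), then \(\operatorname{Fitt}(Q^0)\operatorname{Fitt}(Q^1)^{-1}=\operatorname{det}^{-1}(\operatorname{Cone}(\iota_\Sigma))=\Delta_\Sigma\mathfrak e^1_{K,n}\mathbb Z_p[G]\) by Proposition~\ref{prop:cone-iota-Sigma}, then the four-term Fitting identity (which the paper does not reprove but cites as \cite[Lemma 5]{BunsG0}), and finally cancellation of the invertible ideal \(\operatorname{Fitt}(Q^1)\). You should cite that lemma rather than rely on your sketch, which fails for two reasons: splitting at \(\operatorname{im} d\) needs \(\operatorname{Fitt}(B)=\operatorname{Fitt}(A)\operatorname{Fitt}(C)\) when only the middle term has projective dimension \(\le 1\), and this is false (for \(\Lambda=\mathbb Z_p[\langle\sigma\rangle]\) with \(\sigma^p=1\), \(B=\Lambda/p\Lambda\), \(A=(\sigma-1)B\), one gets \(\operatorname{Fitt}(A)\operatorname{Fitt}(B/A)=p\,(p,\sigma-1)\subsetneq p\Lambda=\operatorname{Fitt}(B)\), since equality is only guaranteed when the quotient has projective dimension \(\le 1\)); and a two-term representative \(P\to P\) with \(P\) projective cannot have non-zero finite \(H^0\).
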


\begin{proof}
Applying Lemma \ref{lem:Greither-perfect-finite-order} to
\(\operatorname{Cone}(\iota_{\Sigma})\), we obtain an exact sequence of
\(\mathfrak e^1_{K,n}\mathbb Z_p[G]\)-modules
\[
0
\longrightarrow
H^0(\operatorname{Cone}(\iota_{\Sigma}))
\longrightarrow
Q^0
\longrightarrow
Q^1
\longrightarrow
H^1(\operatorname{Cone}(\iota_{\Sigma}))
\longrightarrow
0,
\]
where \(Q^0\) and \(Q^1\) are finite
\(\mathfrak e^1_{K,n}\mathbb Z_p[G]\)-modules of projective dimension at most
one, and
\[
\operatorname{Cone}(\iota_{\Sigma})
\simeq
[Q^0\longrightarrow Q^1]
\]
in \(D(\mathfrak e^1_{K,n}\mathbb Z_p[G])\).
By \cite[Lemma 5]{BunsG0}, applied to the preceding exact sequence, one has
\[
\operatorname{Fitt}_{\mathfrak e^1_{K,n}\mathbb Z_p[G]}
\bigl(
H^0(\operatorname{Cone}(\iota_{\Sigma}))^\vee
\bigr)^\#
\operatorname{Fitt}_{\mathfrak e^1_{K,n}\mathbb Z_p[G]}(Q^1)
=
\operatorname{Fitt}_{\mathfrak e^1_{K,n}\mathbb Z_p[G]}
\bigl(
H^1(\operatorname{Cone}(\iota_{\Sigma}))
\bigr)
\operatorname{Fitt}_{\mathfrak e^1_{K,n}\mathbb Z_p[G]}(Q^0).
\]
The involution \(\#\) appears
because the convention for the dual action in \cite[Lemma 5]{BunsG0} is the
opposite one.
Since \(Q^0\) and \(Q^1\) are finite modules of projective dimension at most one, their
Fitting ideals are principal ideals generated by non-zero-divisors; hence they
may be inverted after extension to
\(\mathfrak e^1_{K,n}\mathbb Q_p[G]\). Moreover, from the representative
\([Q^0\to Q^1]\), one has
\[
\operatorname{det}^{-1}_{\mathfrak e^1_{K,n}\mathbb Z_p[G]}
\bigl(\operatorname{Cone}(\iota_{\Sigma})\bigr)
=
\operatorname{Fitt}_{\mathfrak e^1_{K,n}\mathbb Z_p[G]}(Q^0)
\operatorname{Fitt}_{\mathfrak e^1_{K,n}\mathbb Z_p[G]}(Q^1)^{-1}.
\]
By Proposition \ref{prop:cone-iota-Sigma},
\[
\operatorname{det}^{-1}_{\mathfrak e^1_{K,n}\mathbb Z_p[G]}
\bigl(\operatorname{Cone}(\iota_{\Sigma})\bigr)
=
\Delta_{\Sigma}\mathfrak e^1_{K,n}\mathbb Z_p[G].
\]
Combining this determinant computation with the preceding Fitting equality gives
\[
\operatorname{Fitt}_{\mathfrak e^1_{K,n}\mathbb Z_p[G]}
\bigl(H^0(\operatorname{Cone}(\iota_{\Sigma}))^{\vee}\bigr)^\#
=
\Delta_{\Sigma}\,
\operatorname{Fitt}_{\mathfrak e^1_{K,n}\mathbb Z_p[G]}
\bigl(H^1(\operatorname{Cone}(\iota_{\Sigma}))\bigr).
\]
The claim now follows from Proposition \ref{prop:cone-iota-Sigma}.\qed
\end{proof}
Before proving Theorem \ref{Main_Theo_02}, we record three standard facts.
\begin{itemize}
\item The order \(\mathbb Z_p[G]\) is Gorenstein
\cite[Cor. 10.29]{CurtisReiner}. In particular, it has injective dimension
one as a module over itself, and if \(A\) is a finitely generated
\(\mathbb Z_p\)-torsion-free \(\mathbb Z_p[G]\)-module, then
\begin{equation}
\label{Gorenstein-Ext-zero}
\operatorname{Ext}_{\mathbb Z_p[G]}^{1}(A,\mathbb Z_p[G])=0.
\end{equation}
See, for instance, \cite[\S A.3, (b), and (A.7)]{BS19}.

\item If \(B\) is a finite \(\mathbb Z_p[G]\)-module, then the exact sequence
\[
0\to
\mathbb Z_p[G]
\to
\mathbb Q_p[G]
\to
\mathbb Q_p[G]/\mathbb Z_p[G]
\to 0
\]
gives a canonical isomorphism
\[
\operatorname{Ext}_{\mathbb Z_p[G]}^{1}(B,\mathbb Z_p[G])
\simeq
\operatorname{Hom}_{\mathbb Z_p[G]}
\bigl(B,\mathbb Q_p[G]/\mathbb Z_p[G]\bigr).
\]
Moreover, under the usual identification
\[
\operatorname{Hom}_{\mathbb Z_p[G]}
\bigl(B,\mathbb Q_p[G]/\mathbb Z_p[G]\bigr)
\simeq
\bigl(B^\vee\bigr)^\#,
\]
we get
\begin{equation}
\label{Gorenstein-Ext-dual}
\operatorname{Ext}_{\mathbb Z_p[G]}^{1}(B,\mathbb Z_p[G])
\simeq
\bigl(B^\vee\bigr)^\#.
\end{equation}

\item Finally, if \(M\) is any finitely generated \(\mathbb Z_p[G]\)-module,
then
\begin{equation}
\label{Hom-torsion-free-quotient}
\operatorname{Hom}_{\mathbb Z_p[G]}(M,\mathbb Z_p[G])
=
\operatorname{Hom}_{\mathbb Z_p[G]}(M_{\mathrm{tf}},\mathbb Z_p[G]),
\end{equation}
because \(\mathbb Z_p[G]\) is \(\mathbb Z_p\)-torsion-free.
\end{itemize}
\begin{corollary}
Theorem \ref{Main_Theo_02} holds.
\end{corollary}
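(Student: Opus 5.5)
The plan is to show that for every $a$ in the annihilator of $\mathbb Z_pK_{2n-1}(O_K)_{\mathrm{tors}}$ and every $f\in\operatorname{Hom}_{\mathbb Z_p[G]}(\mathbb Z_pK_{2n-1}(O_K)_{\mathrm{tf}},\mathbb Z_p[G])$, the element $a\widetilde f(\Theta'_{K/k,S}(1-n))$ annihilates $\mathbb Z_pK_{2n-2}(O_{K,S})$.

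By \cite[Lemma 6.9(2)]{GreitherPopescu} and linearity, it suffices to treat $a=\Delta_\Sigma$ for a non-empty $\Sigma$ disjoint from $S$. In that case $a\widetilde f(\Theta')=f(\varepsilon_\Sigma)$. Moreover, $\widetilde f(\Theta')$ lies in $\mathfrak e^1_{K,n}\mathbb Q_p[G]$, because $\Theta'$ lies in the $\mathfrak e^1_{K,n}$-part. Hence $f(\varepsilon_\Sigma)=f(\mathfrak e^1_{K,n}\varepsilon_\Sigma)$ lies in $\mathfrak e^1_{K,n}\mathbb Z_p[G]$, and it acts as zero on the $(1-\mathfrak e^1_{K,n})$-part of $\mathbb Z_pK_{2n-2}(O_{K,S})$. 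It therefore remains to show
\[
f(\varepsilon_\Sigma)\in\operatorname{Ann}\bigl(\mathfrak e^1_{K,n}\mathbb Z_pK_{2n-2}(O_{K,S})\bigr).
\]

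The key input is Theorem \ref{thm:Fitting-relation-cone-Sigma}. Put $M:=\mathfrak e^1_{K,n}\mathbb Z_pK_{2n-1}(O_K)$ and $B:=M/\widetilde\varepsilon_\Sigma\mathbb Z_p[G]$, which is finite. Apply $\operatorname{Hom}_{\mathbb Z_p[G]}(-,\mathbb Z_p[G])$ to $0\to\widetilde\varepsilon_\Sigma\mathbb Z_p[G]\to M\to B\to0$. Using \eqref{Hom-torsion-free-quotient}, \eqref{Gorenstein-Ext-zero} for $M_{\mathrm{tf}}$, and \eqref{Gorenstein-Ext-dual}, this gives an exact sequence
\[
\operatorname{Hom}(M_{\mathrm{tf}},\mathbb Z_p[G])\to\operatorname{Hom}(\varepsilon_\Sigma\mathbb Z_p[G],\mathbb Z_p[G])\to (B^\vee)^\#\to 0 .
\]
The middle term is identified with $\mathfrak e^1_{K,n}\mathbb Z_p[G]$ via $\phi\mapsto\phi(\varepsilon_\Sigma)$, since $\varepsilon_\Sigma$ is a free generator over $\mathfrak e^1_{K,n}\mathbb Z_p[G]$. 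Under this identification, the image of the first map is exactly the ideal $I_\Sigma:=\{f(\varepsilon_\Sigma)\}$ formed by the restrictions of $f$. Hence $(B^\vee)^\#\simeq \mathfrak e^1_{K,n}\mathbb Z_p[G]/I_\Sigma$ is cyclic, so
\[
\operatorname{Fitt}\bigl((B^\vee)^\#\bigr)=I_\Sigma,\qquad\text{equivalently}\qquad \operatorname{Fitt}(B^\vee)^\#=I_\Sigma .
\]
The Fitting ideal of a cyclic module is its annihilator, which gives the first equality. I expect the main obstacle to be careful bookkeeping of the $\#$-involution, together with checking that $\operatorname{Fitt}((B^\vee)^\#)=\operatorname{Fitt}(B^\vee)^\#$ matches the convention used in Theorem \ref{thm:Fitting-relation-cone-Sigma}.

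Combining this with Theorem \ref{thm:Fitting-relation-cone-Sigma} yields
\[
I_\Sigma=\Delta_\Sigma\operatorname{Fitt}\bigl(\mathfrak e^1_{K,n}\mathbb Z_pK_{2n-2}(O_{K,S})\bigr).
\]
Since Fitting ideals are contained in annihilators, it follows that
\[
I_\Sigma\subseteq \Delta_\Sigma\operatorname{Ann}\bigl(\mathfrak e^1_{K,n}\mathbb Z_pK_{2n-2}(O_{K,S})\bigr).
\]
The factor $\Delta_\Sigma$ lies in $\mathbb Z_p[G]$, so in particular $f(\varepsilon_\Sigma)$ annihilates $\mathfrak e^1_{K,n}\mathbb Z_pK_{2n-2}(O_{K,S})$. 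Summing over the generators $\Delta_\Sigma$ of the annihilator then gives Theorem \ref{Main_Theo_02}.
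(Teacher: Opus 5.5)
Your reduction to $a=\Delta_\Sigma$ and your target $f(\varepsilon_\Sigma)\in\operatorname{Ann}_{\mathbb Z_p[G]}(\mathfrak e^1_{K,n}\mathbb Z_pK_{2n-2}(O_{K,S}))$ are fine. The derivation, however, breaks at the claimed exact sequence ending in $(B^\vee)^\#\to 0$.

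The long exact sequence obtained from $0\to\widetilde\varepsilon_\Sigma\mathbb Z_p[G]\to M\to B\to 0$ continues after $\operatorname{Ext}^1(B,\mathbb Z_p[G])$ with $\operatorname{Ext}^1(M,\mathbb Z_p[G])$. But $M$ is not torsion-free. \eqref{Hom-torsion-free-quotient} lets you replace $M$ by $M_{\mathrm{tf}}$ in the Hom term, not in the Ext term. In fact \eqref{Gorenstein-Ext-zero}, \eqref{Gorenstein-Ext-dual} and injective dimension one give $\operatorname{Ext}^1(M,\mathbb Z_p[G])\simeq((M_{\mathrm{tors}})^\vee)^\#$, where $M_{\mathrm{tors}}=\mathfrak e^1_{K,n}\mathbb Z_pK_{2n-1}(O_K)_{\mathrm{tors}}$. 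So what you actually get is
\[
0\to \mathfrak e^1_{K,n}\mathbb Z_p[G]/I_\Sigma\to (B^\vee)^\#\to ((M_{\mathrm{tors}})^\vee)^\#\to 0 .
\]
Hence $|B|=|M_{\mathrm{tors}}|\cdot|\mathfrak e^1_{K,n}\mathbb Z_p[G]/I_\Sigma|$. Your isomorphism $(B^\vee)^\#\simeq\mathfrak e^1_{K,n}\mathbb Z_p[G]/I_\Sigma$, and with it the cyclicity and the equality $\operatorname{Fitt}=I_\Sigma$, therefore fails whenever $M_{\mathrm{tors}}\neq 0$. An example is $G$ trivial, $k=K=\mathbb Q(\sqrt{-3})$, $p=3$, where $\mathfrak e^1_{K,n}=1$ and $\mu_3\subset K$. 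Your resulting equality $I_\Sigma=\Delta_\Sigma\operatorname{Fitt}(\cdots)$ would say $\mathcal J(\Theta'_{K/k,S}(1-n))=\operatorname{Fitt}(\mathfrak e^1_{K,n}\mathbb Z_pK_{2n-2}(O_{K,S}))\subseteq\mathbb Z_p[G]$. That would make the torsion factor in the theorem superfluous, which should have been a warning sign.

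The missing idea is to keep this torsion term and then cancel $\Delta_\Sigma$. Use three inputs: the Fitting-ideal inclusion for the sequence above, the equality $\operatorname{Fitt}(((M_{\mathrm{tors}})^\vee)^\#)=\mathfrak e^1_{K,n}\operatorname{Ann}_{\mathbb Z_p[G]}(\mathbb Z_pK_{2n-1}(O_K)_{\mathrm{tors}})$ (the torsion is cyclic), and Theorem \ref{thm:Fitting-relation-cone-Sigma}. Together they yield only
\[
\operatorname{Ann}_{\mathbb Z_p[G]}(\mathbb Z_pK_{2n-1}(O_K)_{\mathrm{tors}})\cdot I_\Sigma\subseteq\Delta_\Sigma\operatorname{Fitt}_{\mathfrak e^1_{K,n}\mathbb Z_p[G]}(\mathfrak e^1_{K,n}\mathbb Z_pK_{2n-2}(O_{K,S})).
\]
You cannot simply drop the factor $\Delta_\Sigma$ on the right via $\Delta_\Sigma\in\mathbb Z_p[G]$, as you do at the end; that would leave an extra factor $\Delta_\Sigma$ in the conclusion. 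It must be cancelled against the annihilator factor on the left, using that $\Delta_\Sigma$ is a unit of $\mathbb Q_p[G]$. The paper does this by writing $I_\Sigma=\Delta_\Sigma\mathcal J(\Theta'_{K/k,S}(1-n))$ and cancelling. In your setup you can equally take the element $\Delta_\Sigma$ of the annihilator to get $\Delta_\Sigma I_\Sigma\subseteq\Delta_\Sigma\operatorname{Fitt}(\cdots)$. Since $\Delta_\Sigma$ is a non-zero-divisor, this gives $I_\Sigma\subseteq\operatorname{Fitt}(\cdots)\subseteq\operatorname{Ann}(\cdots)$, which is exactly what your reduction needs.
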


\begin{proof}
Fix a non-empty finite set \(\Sigma\) of finite places of \(k\) disjoint from
\(S\). We begin by applying
\(\operatorname{Hom}_{\mathbb Z_p[G]}(-,\mathbb Z_p[G])\) to the exact sequence
\[
0\to
\widetilde\varepsilon_{\Sigma}\mathbb Z_p[G]
\to
\mathfrak e^1_{K,n}\mathbb Z_pK_{2n-1}(O_K)
\to
\frac{
\mathfrak e^1_{K,n}\mathbb Z_pK_{2n-1}(O_K)
}{
\widetilde\varepsilon_{\Sigma}\mathbb Z_p[G]
}
\to 0.
\]
This gives an exact sequence
\begin{small}
\begin{align}
\label{Exact-Seq-dual-Epsilon}
&
\operatorname{Hom}_{\mathbb Z_p[G]}
\bigl(\mathfrak e^1_{K,n}\mathbb Z_pK_{2n-1}(O_K),\mathbb Z_p[G]\bigr)
\to
\operatorname{Hom}_{\mathbb Z_p[G]}
\bigl(\widetilde\varepsilon_{\Sigma}\mathbb Z_p[G],\mathbb Z_p[G]\bigr)
\to
\operatorname{Ext}^1_{\mathbb Z_p[G]}
\left(
\frac{
\mathfrak e^1_{K,n}\mathbb Z_pK_{2n-1}(O_K)
}{
\widetilde\varepsilon_{\Sigma}\mathbb Z_p[G]
},
\mathbb Z_p[G]
\right)
\nonumber\\
\to &
\operatorname{Ext}^1_{\mathbb Z_p[G]}
\bigl(\mathfrak e^1_{K,n}\mathbb Z_pK_{2n-1}(O_K),\mathbb Z_p[G]\bigr)
\to
\operatorname{Ext}^1_{\mathbb Z_p[G]}
\bigl(\widetilde\varepsilon_{\Sigma}\mathbb Z_p[G],\mathbb Z_p[G]\bigr).
\end{align}
\end{small}
Since \(\widetilde\varepsilon_{\Sigma}\mathbb Z_p[G]\) is
\(\mathbb Z_p\)-torsion-free, \eqref{Gorenstein-Ext-zero} gives
\[
\operatorname{Ext}^1_{\mathbb Z_p[G]}
\bigl(\widetilde\varepsilon_{\Sigma}\mathbb Z_p[G],\mathbb Z_p[G]\bigr)=0.
\]
We apply \(\operatorname{Hom}_{\mathbb Z_p[G]}(-,\mathbb Z_p[G])\)
to the exact sequence
\[
0\to
\mathfrak e^1_{K,n}\mathbb Z_pK_{2n-1}(O_K)_{\mathrm{tors}}
\to
\mathfrak e^1_{K,n}\mathbb Z_pK_{2n-1}(O_K)
\to
\mathfrak e^1_{K,n}\mathbb Z_pK_{2n-1}(O_K)_{\mathrm{tf}}
\to 0.
\]
Using \eqref{Gorenstein-Ext-zero}, \eqref{Gorenstein-Ext-dual}, and the
fact that \(\mathbb Z_p[G]\) has injective dimension one over itself, this gives
\[
\operatorname{Ext}^1_{\mathbb Z_p[G]}
\bigl(\mathfrak e^1_{K,n}\mathbb Z_pK_{2n-1}(O_K),\mathbb Z_p[G]\bigr)
\simeq
\left(
\bigl(\mathfrak e^1_{K,n}\mathbb Z_pK_{2n-1}(O_K)_{\mathrm{tors}}\bigr)^\vee
\right)^\#.
\]
Moreover, the natural projection
\[
\mathfrak e^1_{K,n}\mathbb Z_pK_{2n-1}(O_K)
\longrightarrow
\mathfrak e^1_{K,n}\mathbb Z_pK_{2n-1}(O_K)_{\mathrm{tf}}
\]
restricts to an isomorphism
\[
\widetilde\varepsilon_{\Sigma}\mathbb Z_p[G]
\xrightarrow{\ \simeq\ }
\varepsilon_{\Sigma}\mathbb Z_p[G].
\]
Indeed, if \(a\widetilde\varepsilon_{\Sigma}\) maps to zero, then
\(a\varepsilon_{\Sigma}=0\). Since
\(\varepsilon_{\Sigma}\mathbb Z_p[G]\) is a free
\(\mathfrak e^1_{K,n}\mathbb Z_p[G]\)-module of rank one, this forces
\(\mathfrak e^1_{K,n}a=0\), and hence
\(a\widetilde\varepsilon_{\Sigma}=0\).
Thus we shall make the identification
\[
\operatorname{Hom}_{\mathbb Z_p[G]}
\bigl(\widetilde\varepsilon_{\Sigma}\mathbb Z_p[G],\mathbb Z_p[G]\bigr)
\simeq
\operatorname{Hom}_{\mathbb Z_p[G]}
\bigl(\varepsilon_{\Sigma}\mathbb Z_p[G],\mathbb Z_p[G]\bigr).
\]
Combining these identifications with \eqref{Exact-Seq-dual-Epsilon}, and using
\eqref{Hom-torsion-free-quotient}, we obtain an exact sequence
\begin{small}
\begin{align}
\label{Exact-Seq-Kernel-J-epsilon}
&
\operatorname{Hom}_{\mathbb Z_p[G]}
\bigl(\mathfrak e^1_{K,n}\mathbb Z_pK_{2n-1}(O_K)_{\mathrm{tf}},\mathbb Z_p[G]\bigr)
\to
\operatorname{Hom}_{\mathbb Z_p[G]}
\bigl(\varepsilon_{\Sigma}\mathbb Z_p[G],\mathbb Z_p[G]\bigr)
\to
\left(
\left(
\frac{
\mathfrak e^1_{K,n}\mathbb Z_pK_{2n-1}(O_K)
}{
\widetilde\varepsilon_{\Sigma}\mathbb Z_p[G]
}
\right)^\vee
\right)^\# \nonumber \\
\to
&\left(
\bigl(\mathfrak e^1_{K,n}\mathbb Z_pK_{2n-1}(O_K)_{\mathrm{tors}}\bigr)^\vee
\right)^\#
\to 0.
\end{align}
\end{small}
Let \(\mathcal J(\varepsilon_\Sigma)\) be the ideal of
\(\mathfrak e^1_{K,n}\mathbb Z_p[G]\) generated by the elements
\[
f(\varepsilon_\Sigma),
\qquad
f\in
\operatorname{Hom}_{\mathbb Z_p[G]}
\bigl(\mathbb Z_pK_{2n-1}(O_K)_{\mathrm{tf}},\mathbb Z_p[G]\bigr).
\]
Since \(\mathfrak e^1_{K,n}\varepsilon_\Sigma=\varepsilon_\Sigma\), this ideal
is equivalently the image of the evaluation map
\[
\operatorname{Hom}_{\mathbb Z_p[G]}
\bigl(
\mathfrak e^1_{K,n}\mathbb Z_pK_{2n-1}(O_K)_{\mathrm{tf}},
\mathbb Z_p[G]
\bigr)
\longrightarrow
\mathfrak e^1_{K,n}\mathbb Z_p[G],
\qquad
f\longmapsto f(\varepsilon_\Sigma).
\]
Let
\[
\mathcal K:=
\ker\left(
\left(
\left(
\frac{
\mathfrak e^1_{K,n}\mathbb Z_pK_{2n-1}(O_K)
}{
\widetilde\varepsilon_{\Sigma}\mathbb Z_p[G]
}
\right)^\vee
\right)^\#
\to
\left(
\bigl(\mathfrak e^1_{K,n}\mathbb Z_pK_{2n-1}(O_K)_{\mathrm{tors}}\bigr)^\vee
\right)^\#
\right).
\]
Then \eqref{Exact-Seq-Kernel-J-epsilon} gives a commutative diagram with exact
rows
\begin{small}
\begin{equation*}
\xymatrix@=1.5pc{
&\mathrm{Hom}_{\mathbb{Z}_p[G]}(\mathfrak e^1_{K,n}\mathbb Z_pK_{2n-1}(O_K)_{\mathrm{tf}}, \mathbb{Z}_p[G])\ar[r] \ar@{->>}[d]_{\Phi}&\mathrm{Hom}_{\mathbb{Z}_p[G]}(\varepsilon_{\Sigma}\mathbb Z_p[G], \mathbb Z_p[G]) \ar[r] \ar[d]^{\cong}_{\Phi} &\mathcal{K}\ar[r] \ar[d] &0\\
0 \ar[r] &\mathcal{J}(\varepsilon_\Sigma) \ar[r] & \mathfrak e^1_{K,n}\mathbb Z_p[G]\ar[r] &\mathfrak e^1_{K,n}\mathbb Z_p[G]/\mathcal{J}(\varepsilon_\Sigma)\ar[r] &0
}
\end{equation*}
\end{small}
where $\Phi:\;f\mapsto f(\varepsilon_\Sigma)$. The isomorphism in the middle follows from the fact that $\varepsilon_\Sigma\mathbb Z_p[G]$ is a rank-one free $\mathfrak e^1_{K,n}\mathbb Z_p[G]$-module and the vertical map on the right-hand side is induced by commutativity of the diagram. Hence the snake
lemma gives
\[
\mathcal K\simeq
\mathfrak e^1_{K,n}\mathbb Z_p[G]/\mathcal J(\varepsilon_\Sigma).
\]
Applying the involution \(\#\) to the resulting exact sequence gives
\[
0\to
\bigl(
\mathfrak e^1_{K,n}\mathbb Z_p[G]/\mathcal J(\varepsilon_\Sigma)
\bigr)^\#
\to
\left(
\frac{
\mathfrak e^1_{K,n}\mathbb Z_pK_{2n-1}(O_K)
}{
\widetilde\varepsilon_{\Sigma}\mathbb Z_p[G]
}
\right)^\vee
\to
\bigl(\mathfrak e^1_{K,n}\mathbb Z_pK_{2n-1}(O_K)_{\mathrm{tors}}\bigr)^\vee
\to 0.
\]
Therefore the standard Fitting ideal inclusion for short exact sequences gives
\begin{small}
\begin{align}
\label{Fitt-inclusion-before-main-relation}
&
\operatorname{Fitt}_{\mathfrak e^1_{K,n}\mathbb Z_p[G]}
\bigl(
(\mathfrak e^1_{K,n}\mathbb Z_pK_{2n-1}(O_K)_{\mathrm{tors}})^\vee
\bigr)
\cdot
\mathcal J(\varepsilon_\Sigma)^\#
\subseteq
\operatorname{Fitt}_{\mathfrak e^1_{K,n}\mathbb Z_p[G]}
\left(
\left(
\frac{
\mathfrak e^1_{K,n}\mathbb Z_pK_{2n-1}(O_K)
}{
\widetilde\varepsilon_{\Sigma}\mathbb Z_p[G]
}
\right)^\vee
\right).
\end{align}
\end{small}
If \(e_\chi\) occurs in \(\mathfrak e^1_{K,n}\), then so does
\(e_{\chi^{-1}}\) by \eqref{eq:35}. Hence
\[
(\mathfrak e^1_{K,n})^\#=\mathfrak e^1_{K,n}.
\]
Consequently,
\[
\bigl(\mathfrak e^1_{K,n}\mathbb Z_pK_{2n-1}(O_K)_{\mathrm{tors}}\bigr)^\vee
\simeq
\mathfrak e^1_{K,n}
\bigl(\mathbb Z_pK_{2n-1}(O_K)_{\mathrm{tors}}\bigr)^\vee.
\]
It follows that
\begin{align*}
\operatorname{Fitt}_{\mathfrak e^1_{K,n}\mathbb Z_p[G]}
\bigl(
(\mathfrak e^1_{K,n}\mathbb Z_pK_{2n-1}(O_K)_{\mathrm{tors}})^\vee
\bigr)
& =
\mathfrak e^1_{K,n}
\operatorname{Fitt}_{\mathbb Z_p[G]}
\bigl(
(\mathbb Z_pK_{2n-1}(O_K)_{\mathrm{tors}})^\vee
\bigr)
\\
& =
\mathfrak e^1_{K,n}
\operatorname{Ann}_{\mathbb Z_p[G]}
\bigl(
(\mathbb Z_pK_{2n-1}(O_K)_{\mathrm{tors}})^\vee
\bigr)
\\
& =
\mathfrak e^1_{K,n}
\operatorname{Ann}_{\mathbb Z_p[G]}
\bigl(
\mathbb Z_pK_{2n-1}(O_K)_{\mathrm{tors}}
\bigr)^\#.
\end{align*}
Here the second equality follows from the fact that
\(\mathbb Z_pK_{2n-1}(O_K)_{\mathrm{tors}}\) is cyclic over
\(\mathbb Z_p[G]\). Indeed, by the Quillen--Lichtenbaum identification and the
boundary map associated with
\[
0\to \mathbb Z_p(n)\to \mathbb Q_p(n)\to
\mathbb Q_p/\mathbb Z_p(n)\to 0,
\]
this torsion subgroup identifies with
\[
H^0(K,\mathbb Q_p/\mathbb Z_p(n)),
\]
which is cyclic as a \(\mathbb Z_p\)-module, hence also cyclic as a
\(\mathbb Z_p[G]\)-module. Thus its Pontryagin dual is cyclic, and for a cyclic
module the Fitting ideal coincides with the annihilator. The last equality
follows, for example, from \cite[Lemma 3.1(ii)]{Popescu}. Since
\(\mathfrak e^1_{K,n}\mathcal J(\varepsilon_\Sigma)=\mathcal J(\varepsilon_\Sigma)\),
\eqref{Fitt-inclusion-before-main-relation} gives
\[
\operatorname{Ann}_{\mathbb Z_p[G]}
\bigl(
\mathbb Z_pK_{2n-1}(O_K)_{\mathrm{tors}}
\bigr)^\#
\cdot
\mathcal J(\varepsilon_\Sigma)^\#
\subseteq
\operatorname{Fitt}_{\mathfrak e^1_{K,n}\mathbb Z_p[G]}
\left(
\left(
\frac{
\mathfrak e^1_{K,n}\mathbb Z_pK_{2n-1}(O_K)
}{
\widetilde\varepsilon_{\Sigma}\mathbb Z_p[G]
}
\right)^\vee
\right).
\]
Applying \(\#\) to this inclusion and using
Theorem \ref{thm:Fitting-relation-cone-Sigma}, we obtain
\[
\operatorname{Ann}_{\mathbb Z_p[G]}
\bigl(
\mathbb Z_pK_{2n-1}(O_K)_{\mathrm{tors}}
\bigr)
\cdot
\mathcal J(\varepsilon_\Sigma)
\subseteq
\Delta_{\Sigma}
\operatorname{Fitt}_{\mathfrak e^1_{K,n}\mathbb Z_p[G]}
\left(
\mathfrak e^1_{K,n}\mathbb Z_pK_{2n-2}(O_{K,S})
\right).
\]
Finally, by definition of \(\varepsilon_\Sigma\), one has
\[
\varepsilon_\Sigma
=
\Delta_\Sigma\Theta'_{K/k,S}(1-n).
\]
Therefore
\[
\mathcal J(\varepsilon_\Sigma)
=
\Delta_\Sigma
\mathcal J\bigl(\Theta'_{K/k,S}(1-n)\bigr).
\]
Thus
\[
\Delta_\Sigma\,
\operatorname{Ann}_{\mathbb Z_p[G]}
\bigl(
\mathbb Z_pK_{2n-1}(O_K)_{\mathrm{tors}}
\bigr)
\cdot
\mathcal J\bigl(\Theta'_{K/k,S}(1-n)\bigr)
\subseteq
\Delta_{\Sigma}
\operatorname{Fitt}_{\mathfrak e^1_{K,n}\mathbb Z_p[G]}
\left(
\mathfrak e^1_{K,n}\mathbb Z_pK_{2n-2}(O_{K,S})
\right).
\]
Since \(\Delta_\Sigma\in\mathbb Q_p[G]^\times\), we may cancel
\(\Delta_\Sigma\) after extending scalars to \(\mathbb Q_p[G]\). Hence
\[
\operatorname{Ann}_{\mathbb Z_p[G]}
\bigl(
\mathbb Z_pK_{2n-1}(O_K)_{\mathrm{tors}}
\bigr)
\cdot
\mathcal J\bigl(\Theta'_{K/k,S}(1-n)\bigr)
\subseteq
\operatorname{Fitt}_{\mathfrak e^1_{K,n}\mathbb Z_p[G]}
\left(
\mathfrak e^1_{K,n}\mathbb Z_pK_{2n-2}(O_{K,S})
\right).
\]
The right-hand side is contained in
\[
\operatorname{Ann}_{\mathbb Z_p[G]}
\left(
\mathbb Z_pK_{2n-2}(O_{K,S})
\right)
\]
Indeed, if
\(a\in \mathfrak e^1_{K,n}\mathbb Z_p[G]\) annihilates
\(\mathfrak e^1_{K,n}\mathbb Z_pK_{2n-2}(O_{K,S})\), then \(a\) annihilates
the \((1-\mathfrak e^1_{K,n})\)-part automatically, and therefore annihilates
the whole module.\qed
\end{proof}

\end{document}